\documentclass{amsart}
\usepackage[a4paper,outer=3.4cm,inner=3.4cm,total={14cm,21.4cm},bottom=3.4cm,top=3.4cm]{geometry}

\usepackage[english]{babel}
\usepackage[utf8]{inputenc}
\usepackage[foot]{amsaddr}
\usepackage{amsmath,amsthm,amssymb}
\usepackage{enumerate}
\usepackage{tikz-cd}
\usepackage[shortlabels]{enumitem}
\usepackage{mathtools}
\usepackage{latexsym}
\usepackage{thmtools}
\usepackage{array}
\usepackage[colorlinks=true]{hyperref}
\hypersetup{colorlinks, citecolor=blue, filecolor=black, linkcolor=black, urlcolor=black}
\usepackage{stmaryrd}

\usepackage[capitalize]{cleveref}
\crefformat{equation}{\normalfont(#2#1#3)}

\newtheorem{thm}{Theorem}[section]
\newtheorem{Theorem}[thm]{Theorem}
\newtheorem{Lemma}[thm]{Lemma}
\newtheorem{Proposition}[thm]{Proposition}
\newtheorem{Corollary}[thm]{Corollary}

\theoremstyle{definition}
\newtheorem{Definition}[thm]{Definition}
\newtheorem{Question}[thm]{Question}
\newtheorem{Remark}[thm]{Remark}

\newcommand{\smallmat}[4]{\left(\begin{smallmatrix} #1 & #2 \\ #3 & #4\end{smallmatrix}\right)}
\newcommand*\isomarrow{\xrightarrow{\raisebox{-0.35em}{\smash{\ensuremath{\sim}}}}}

\newcommand{\wh}{\widehat}
\newcommand{\N}{\mathbb{N}}
\newcommand{\Z}{\mathbb{Z}}
\newcommand{\Q}{\mathbb{Q}}
\newcommand{\C}{\mathbb{C}}
\newcommand{\B}{\mathcal{B}}
\newcommand{\Exp}{\mathrm{Exp}}
\newcommand{\Lie}{\operatorname{Lie}}
\newcommand{\LS}{\mathrm{LS}}
\newcommand{\BdR}{\mathrm{B}_{\dR}}

\newcommand{\m}{\mathfrak{m}}
\renewcommand{\P}{\mathbb{P}}
\renewcommand{\O}{\mathcal{O}}
\newcommand{\G}{\mathbb{G}}
\newcommand{\coker}{\operatorname{coker}}
\newcommand{\Hom}{\operatorname{Hom}}
\newcommand{\uEnd}{\underline{\mathrm{End}}}
\newcommand{\uHom}{\underline{\mathrm{Hom}}}
\newcommand{\uExt}{\underline{\mathrm{Ext}}}

\newcommand{\Ext}{\operatorname{Ext}}
\newcommand{\End}{\mathrm{End}}
\newcommand{\Aut}{\mathrm{Aut}}
\newcommand{\GL}{\mathrm{GL}}
\newcommand{\Sym}{\operatorname{Sym}}
\newcommand{\Perf}{\operatorname{Perf}}
\newcommand{\SmRig}{\mathrm{Rig}^{\mathrm{sm}}}
\newcommand{\Spec}{\operatorname{Spec}}
\newcommand{\Spa}{\operatorname{Spa}}
\newcommand{\id}{{\operatorname{id}}}
\newcommand{\cts}{{\operatorname{cts}}}
\newcommand{\an}{{\mathrm{an}}}
\newcommand{\dR}{\mathrm{dR}}
\newcommand{\hotimes}{\widehat{\otimes}}
\newcommand{\et}{{\operatorname{\acute{e}t}}}
\newcommand{\proet}{{\operatorname{pro\acute{e}t}}}
\newcommand{\HT}{\operatorname{HT}}
\newcommand{\HTlog}{\operatorname{HTlog}}

\newcommand{\Pic}{\operatorname{Pic}}
\newcommand{\uPic}{\mathbf{Pic}}
\newcommand{\wt}{\widetilde}
\newcommand{\wtOm}{\wt\Omega}

\renewcommand{\wh}{\widehat}
\renewcommand{\lim}{\varprojlim}
\newcommand{\tf}{[\tfrac{1}{p}]}
\newcommand{\can}{\mathrm{can}}

\tikzset{
	labelrotate/.style={anchor=south, rotate=90, inner sep=.5mm}} 
\makeatletter
\@namedef{subjclassname@2020}{2020 Mathematics Subject Classification}
\makeatother
\begin{document}
	\newgeometry{outer=3.4cm,inner=3.4cm,bottom=2.3cm,top=3.6cm}
	\title[A $p$-adic Simpson correspondence for smooth proper rigid varieties]{A $p$-adic Simpson correspondence\\ for smooth proper rigid varieties}
	\author{Ben Heuer}
	\address{Goethe University Frankfurt,
		Robert-Mayer-Str. 6-8,
		60325 Frankfurt am Main, Germany}
	\email{heuer@math.uni-frankfurt.de}
	\subjclass[2020]{14G22, 14G45, 14D22}
	\maketitle\thispagestyle{empty}
	\begin{abstract}
		For any smooth proper rigid analytic space $X$ over a complete algebraically closed extension of $\Q_p$, we construct a \mbox{$p$-adic} Simpson correspondence: an equivalence  of categories between vector bundles on Scholze's pro-\'etale site of $X$ and Higgs bundles on $X$. This generalises a result of Faltings  from smooth projective curves to any higher dimension, and further to the rigid analytic setup. The strategy is new, and is based on the study of rigid analytic moduli spaces of pro-\'etale invertible  sheaves on spectral varieties. 
	\end{abstract}
	\setcounter{tocdepth}{2}
	
	\section{Introduction}

	\subsection{Main result}
	
	Let $K$ be a complete algebraically closed extension of $\Q_p$. 
	The goal of this article is to prove the following global $p$-adic Simpson correspondence.
	
	\begin{Theorem}[{\Cref{t:p-adicSimpson-proper}}]\label{t:main-thm}
		Let $X$ be a smooth proper rigid space over $K$. 
		Then choices of a $\BdR^+/\xi^2$-lift $\mathbb X$ of $X$ and of an exponential $\Exp$ for $K$ induce an exact tensor equivalence
		\[
		\mathrm{S}_{\mathbb X,\Exp}:\big\{\text{\normalfont pro-\'etale vector bundles on }X\big\}\isomarrow \big\{\text{\normalfont Higgs bundles on }X\big\}\]
		which is natural in the datum of the pair $(X,\mathbb X)$.
	\end{Theorem}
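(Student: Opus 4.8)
The plan is to construct $\mathrm S_{\mathbb X,\Exp}$ by reducing to the rank one case via spectral varieties, and to settle the rank one case by comparing rigid analytic moduli spaces of invertible sheaves. To a Higgs bundle $(E,\theta)$ on $X$ one associates its \emph{spectral variety} $\pi\colon Y_\theta\hookrightarrow\mathbb V(\Omega^1_X(-1))\to X$, the closed rigid subspace of the total space of $\Omega^1_X(-1)$ cut out by the characteristic polynomial of $\theta$; it is finite flat over $X$ of degree $\rk E$, and the datum of $(E,\theta)$ is equivalent to that of a coherent $\O_{Y_\theta}$-module $\mathcal M$, invertible over a dense open, together with the tautological $1$-form $\tau$, via $E=\pi_*\mathcal M$ with $\theta$ induced by multiplication by $\tau$. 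Conversely, a pro-\'etale vector bundle $V$ on $X$ carries a canonical $\O_X$-linear \emph{Higgs--Sen operator} with values in $\Omega^1_X(-1)$, coming from the structure of $V$ relative to the pro-finite-\'etale cover of $X$ together with the choice of $\mathbb X$; its characteristic polynomial defines a finite flat spectral cover $Y'\to X$ over which $V$ becomes a pro-\'etale invertible sheaf. Granting that these two assignments are mutually inverse and independent of the chosen presentation, the theorem reduces to a correspondence between pro-\'etale invertible sheaves and Higgs line bundles on the class of spectral varieties $Y$ --- proper over $K$, but in general singular and non-reduced --- which is natural and compatible with finite flat pushforward, plus the bookkeeping of $\End$ and $\otimes$ after pushing forward along $\pi$.

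For this rank one correspondence I would realise both sides as $K$-points of rigid analytic moduli spaces over $Y$ and identify these. Pro-\'etale line bundles on $Y$ are governed, Zariski-locally, by the Hodge--Tate sequence $0\to\O_Y\to\wtOm_Y\to\Omega^1_Y(-1)\to 0$ (or its appropriate substitute on singular $Y$), yielding a moduli space $\uPic^{\proet}_Y$ sitting in $0\to\Pic(Y)\to\uPic^{\proet}_Y\xrightarrow{\HT}H^0(Y,\Omega^1_Y(-1))$, whereas Higgs line bundles on $Y$ are classified by $\Pic(Y)\times H^0(Y,\Omega^1_Y(-1))$, integrability of the Higgs field being automatic in rank one. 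The $\BdR^+/\xi^2$-lift $\mathbb X$ trivialises the extension class of the Hodge--Tate sequence, producing a splitting $\wtOm_Y\simeq\O_Y\oplus\Omega^1_Y(-1)$ and hence a section of $\HT$; the exponential $\Exp$ for $K$ then converts the resulting additive datum in $H^1(Y,\O_Y)$ into a multiplicative one, identifying $\uPic^{\proet}_Y$ with $\uPic_Y\times H^0(Y,\Omega^1_Y(-1))$ and thereby with the Higgs moduli. Applying this with $Y=Y_\theta$ and its tautological $1$-form and pushing forward along $\pi$ defines $\mathrm S_{\mathbb X,\Exp}$; running the same construction over the universal spectral variety (the Hitchin-type base $\bigoplus_i H^0(X,(\Omega^1_X(-1))^{\otimes i})$) shows that it is independent of the chosen spectral presentation and functorial in $(X,\mathbb X)$.

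It then remains to check that $\mathrm S_{\mathbb X,\Exp}$ is an exact tensor equivalence. Exactness and the tensor property follow from flatness of the spectral covers and the multiplicativity of the rank one identification. Full faithfulness is obtained by computing $\Hom$ on each side as $H^0$ of the internal $\uHom$ --- again a vector bundle, resp.\ a Higgs bundle, of each type --- and reducing to a Hodge--Tate comparison with coefficients of the form $R\nu_*(\text{the pro-\'etale }\uHom)\simeq(\text{the Higgs complex of the associated Higgs }\uHom)$, with $\nu$ the projection to the analytic site, which one gets by spreading the smooth proper Hodge--Tate decomposition over the moduli space, or from the local $p$-adic Simpson correspondence together with the line bundle case just established. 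Essential surjectivity is then exactly the assertion that the two spectral-variety recipes above are mutually inverse.

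The \textbf{main obstacle} is the rank one case on the \emph{singular, non-reduced} spectral varieties: the pro-\'etale Hodge--Tate formalism and the construction of $\uPic^{\proet}$ are cleanest on smooth spaces, so one must either extend them to spectral covers directly or work on the smooth total space $\mathbb V(\Omega^1_X(-1))$ and a suitable partial compactification while keeping track of the proper support --- which forces one to handle rank one \emph{torsion-free} rather than locally free sheaves, i.e.\ compactified-Jacobian phenomena --- and then prove that the resulting functor is insensitive to the spectral presentation. This, rather than the (by now fairly standard) cohomological comparisons, is where the real work lies.
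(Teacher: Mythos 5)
Your overall strategy --- pass to the spectral cover, reduce to invertible objects, realise both sides as rigid analytic moduli spaces, split the Hodge--Tate sequence with the lift $\mathbb X$ and convert additive to multiplicative data with $\Exp$ --- is exactly the strategy of the paper. But the specific implementation you propose for the rank one step has a genuine gap, and it is precisely the one you flag as the ``main obstacle'' without resolving it. Pro-\'etale invertible sheaves \emph{on the spectral variety $Y$ itself} cannot work: the completed structure sheaf on $Y_{\proet}$ (or $Y_v$) is built from perfectoid algebras, which are reduced, so $H^1_{\proet}(Y,\O_Y^\times)$ is blind to the nilpotents of $Y$ --- and the spectral variety of a Higgs bundle with nilpotent Higgs field is non-reduced in an essential way. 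Your two proposed workarounds (extend the Hodge--Tate formalism to singular $Y$, or work with torsion-free rank one sheaves on a compactification of $\mathbb V(\wtOm_X)$ \`a la compactified Jacobians) both inherit this problem or introduce new ones. The fix in the paper is a \emph{relative} reformulation: one never leaves $X_{\proet}$, and instead studies invertible modules over the sheaf of algebras $\B=\nu^\ast B$ on $X_{\proet}$, where $B\subseteq\uEnd(E)$ is the coherent image of $\Sym^\bullet\wtOm_X^\vee$ (a ``minimal polynomial'' cover rather than your characteristic-polynomial one). The algebra $\B$ on $X_{\proet}$ retains the nilpotent structure that $\O_{Y}$ on $Y_{\proet}$ forgets, and the Leray sequence for $\nu\colon X_{\proet}\to X_{\et}$ applied to $\B^\times$ produces the correct moduli functor $\uPic_{\B,\proet}$. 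Relatedly, you do not need the BNR dictionary $E=\pi_\ast\mathcal M$ with $\mathcal M$ generically invertible: the paper simply twists $\nu^\ast E$ by an invertible $\B$-module $\mathcal L_\theta$ with $\HTlog(\mathcal L_\theta)=\tau_B$, which sidesteps torsion-free sheaves entirely and makes untwisting a pro-\'etale vector bundle into an analytic one a purely local statement.

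Two further points are glossed over in a way that matters. First, surjectivity of $\HTlog\colon\uPic_{\B,\proet}\to H^0(X,B\otimes\wtOm_X)\otimes\G_a$ is not formal from the additive splitting: one needs a geometric argument (proper base change plus connectedness of the target vector group to kill the boundary map into $R^2\pi'_{\et\ast}\O^\times$), and applying Fargues' exponential requires a rigid \emph{group} with surjective $[p]$ --- since representability of $\uPic_{X'}$ is open in general, the paper constructs a reduction of structure group $\mathcal P_{\mathbb X}$ to $\uPic_{X'}[p^\infty]$ that is always representable. Second, to make $(E,\theta)\mapsto\mathcal L_\theta$ functorial (needed even to define $\mathrm S_{\mathbb X,\Exp}$ on morphisms, via direct sums, and for the tensor property), one must compare the chosen invertible modules across different algebras $B$; since $\Aut(\mathcal L)=B(X)^\times$ does not match the automorphisms of a fibre $B_x^\times$ for non-reduced $B$, a bare base-point rigidification fails, and the paper develops a two-part rigidification (a trivialisation of the fibre together with an identification of $\mathcal L^\times[\tfrac1p]$ with the exponentiated Higgs--Tate torsor) to pin $\mathcal L_\theta$ down up to unique isomorphism. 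Your ``universal spectral variety'' remark points in the right direction but does not substitute for this.
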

	Here an exponential for $K$ is a continuous splitting of the $p$-adic logarithm $\log\colon 1+\mathfrak m_K\to K$, and  the two sides of the $p$-adic Simpson correspondence $
	\mathrm{S}_{\mathbb X,\Exp}$ are defined as follows:
	\begin{Definition}
		\begin{enumerate}
			\item 
		A pro-\'etale vector bundle on $X$ is a finite locally free sheaf on the pro-\'etale site $X_{\proet}$ of Scholze \cite{Scholze_p-adicHodgeForRigid} endowed with the completed structure sheaf $\widehat{\O}$.
		\item 
		A Higgs bundle on $X$ is a pair $(E,\theta)$ of an analytic vector bundle $E$ on $X$ and a morphism of $\O_X$-modules $\theta\colon E\to E\otimes \Omega_X^{1}(-1)$ satisfying $\theta\wedge \theta=0$.
		\end{enumerate}
	\end{Definition}
	
	Faltings constructed the $p$-adic Simpson correspondence in the case when $X$ is a smooth projective curve \cite{Faltings_SimpsonI}, under some further assumptions on $X$ and $K$, and formulated in terms of ``generalised representations'', which are equivalent to pro-\'etale vector bundles (see also \cite[\S1.5]{heuer-sheafified-paCS} for some background).  Since then, it has been one of the main open questions in the area  whether such a  correspondence exists in higher dimension (see e.g.\ \cite[Foreword]{AGT-p-adic-Simpson}). \Cref{t:main-thm} confirms that this is the case: It generalises  Faltings' result not only from smooth projective curves to smooth proper varieties, but further to the rigid analytic setting.
	
	Our method is quite different from that of \cite{Faltings_SimpsonI} even for curves, and we can avoid any use of semi-stable models and log-structures. Instead, we work with Scholze's perfectoid foundations of $p$-adic Hodge theory. As a consequence, our result is stronger than that of Faltings even in the case of curves, namely we need weaker choices, as we will explain below.
	
	\medskip
	Assume now additionally that $X$ is connected. 	As the name ``generalised representations'' suggests, any choice of base-point $x\in X(K)$ induces, via descent from the universal pro-finite-\'etale cover, a natural fully faithful functor 
	\[\mathrm{Rep}_{K}(\pi_1(X,x))\hookrightarrow \big\{\text{pro-\'etale vector bundles on }X\big\}\]
	from continuous representations of the \'etale fundamental group $\pi_1(X,x)$ on finite dimensional $K$-vector spaces 
	(see \cite[Thm.~5.2]{heuer-v_lb_rigid}). We thus obtain a fully faithful exact tensor functor
	\[\mathcal S_{\mathbb X,\Exp}:\mathrm{Rep}_{K}(\pi_1(X,x))\hookrightarrow \big\{\text{Higgs bundles on }X\big\}\]
	from \Cref{t:main-thm}.
	This allows us to generalise the question posed in \cite[\S5]{Faltings_SimpsonI}:\restoregeometry
	\begin{Question}\label{q:image-of-reps}
		How can we characterise the essential image of $\mathcal S_{\mathbb X,\Exp}$?
	\end{Question}
	The name ``$p$-adic Simpson correspondence'' for \Cref{t:main-thm} alludes to the famous non-abelian Hodge correspondence in complex geometry due to Corlette and Simpson \cite{SimpsonCorrespondence}.
	For a smooth \textit{projective} variety $Y$ over $\C$ with a base-point $y\in Y(\C)$, this is an equivalence of categories from  representations of the topological fundamental group $\pi_1(Y,y)$ on finite dimensional \mbox{$\C$-vector} spaces to the category of semistable Higgs bundles on $Y$ with vanishing rational Chern classes. The functor $\mathcal S_{\mathbb X,\Exp}$ is a very close analogue of this functor.
	
	\medskip
	
		\Cref{q:image-of-reps} is therefore another main question about the $p$-adic Simpson correspondence. It has been widely expected for some time that in Faltings' setting of curves over $K=\C_p$, the completion of $\overline{\Q}_p$, the essential image of $\mathcal S_{\mathbb X,\Exp}$  is given by semistable Higgs bundles of degree $0$ as over $\C$ (the degree being the only non-trivial Chern class for vector bundles on curves). However, a recent preprint of Andreatta \cite{andreatta2024padic} shows that even for curves these conditions are in general not strong enough to describe the essential image.
		 Cases in which the answer to \Cref{q:image-of-reps} is well-understood include line bundles, where it is proved in \cite[Thm.~1.1]{heuer-geometric-Simpson-Pic} that for \textit{projective} $X$ over $\C_p$, the essential image of $\mathcal S_{\mathbb X,\Exp}$ is given by Higgs line bundles with vanishing rational Chern classes. But already for line bundles on general proper $X$, or for larger fields $K$, it is shown that one in general needs stronger assumptions. In summary, these known cases indicate that \Cref{q:image-of-reps} is quite hard, and it  currently seems wide open.
	
	\subsection{$p$-adic non-abelian Hodge theory}

	Let $X$ be a smooth proper rigid space over $K$. Then we have a $p$-adic analogue of the Hodge decomposition from complex geometry: By a result of Scholze \cite[Thm.~3.20]{Scholze2012Survey}, building on the work of Tate \cite{tate1967p}, Faltings \cite{faltings1988p}  and others in the algebraic setting, the datum of a $\BdR^+/\xi^2$-lift $\mathbb X$ of $X$ induces an isomorphism
	 \begin{equation}\label{eq:Hodge-decomp}
	  H^n_{\et}(X,\Q_p)\otimes_{\Q_p} K=\textstyle\bigoplus\limits_{i+j=n}H^i(X,\Omega^j_X(-j))
	  \end{equation}
	 where $(-j)$ denotes a Tate twist.
	We note that there is a canonical such lift $\mathbb X$ if we are given a model of $X$ over a complete discretely valued subfield of $K$ with perfect residue field.
	 
	 \medskip
	 
	In complex geometry, the non-abelian Hodge correspondence of Corlette and Simpson is a non-abelian categorical generalisation of the Hodge decomposition.
	  Starting with the pioneering work of Deninger--Werner \cite{DeningerWerner_vb_p-adic_curves} and Faltings \cite{Faltings_SimpsonI}, it has been a much studied question  what a $p$-adic version of the non-abelian Hodge correspondence could look like. Our \Cref{t:main-thm} can now be regarded as providing such a ``$p$-adic non-abelian Hodge correspondence''. There are several ways to explain this interpretation:
	 The first perspective, emphasized by Abbes--Gros  \cite{AbbesGros-II}, is that  the $p$-adic Simpson corrspondence  should generalise the $p$-adic Hodge decomposition \Cref{eq:Hodge-decomp} to more general coefficients. Indeed, we prove:
	
	\begin{Theorem}\label{t:intro-cohom-comparison}
		In the setting of \Cref{t:main-thm}, let $V$ be a pro-\'etale vector bundle on $X$ and let $(E,\theta)=
		\mathrm{S}_{\mathbb X,\Exp}(V)$ be the associated Higgs bundle. Then there is a natural isomorphism 
		\[ R\Gamma(X_{\proet},V)=R\Gamma_{\mathrm{Higgs}}(X,(E,\theta)).\]
	\end{Theorem}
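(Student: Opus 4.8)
\emph{Proof strategy.} The plan is to deduce the claim from a sheaf-theoretic statement on the projection $\nu\colon X_{\proet}\to X_{\et}$: namely that there is a natural isomorphism
\[
R\nu_*V\;\simeq\;\big[\,E\xrightarrow{\ \theta\ }E\otimes\Omega^1_X(-1)\xrightarrow{\ \theta\ }E\otimes\Omega^2_X(-2)\to\cdots\,\big]
\]
in the derived category of $\O_X$-modules on $X_{\et}$, where the right-hand side is the Higgs complex of $(E,\theta)=\mathrm{S}_{\mathbb X,\Exp}(V)$. Granting this, one applies $R\Gamma(X_{\et},-)$; since the Higgs complex is a complex of coherent $\O_X$-modules, its étale hypercohomology agrees with its analytic one and computes $R\Gamma_{\mathrm{Higgs}}(X,(E,\theta))$, while the left-hand side gives $R\Gamma(X_{\et},R\nu_*V)=R\Gamma(X_{\proet},V)$. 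Naturality in $V$ will be inherited from the functoriality of all the constructions involved.

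The displayed isomorphism is local on $X_{\et}$, so one may assume $X$ is a \emph{toric} affinoid, i.e.\ admits an étale map $X\to\mathbb T^d$ to a torus, with its associated affinoid perfectoid toric tower $X_\infty\to X$ of profinite Galois group $\Gamma\cong\Z_p(1)^d$. Since $X_\infty$ is affinoid perfectoid and $V$ is a pro-étale vector bundle, $V$ is acyclic on $X_{\infty,\proet}$, so the Cartan–Leray sequence for $X_\infty\to X$ identifies $R\Gamma(X_{\proet},V)$ with $R\Gamma_{\cts}(\Gamma,M)$, where $M=V(X_\infty)$ is a finite projective $\wh\O(X_\infty)$-module with continuous semilinear $\Gamma$-action. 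Continuous cohomology of $\Z_p^d$ is then computed by the Koszul complex on the commuting operators $\gamma_1-1,\dots,\gamma_d-1$ attached to topological generators $\gamma_i$ of $\Gamma$.

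It remains to match this Koszul complex with the Higgs complex of $(E,\theta)$. Here one invokes the local description underlying the construction of $\mathrm{S}_{\mathbb X,\Exp}$: compatibly with the tower, $M\cong E(X)\otimes_{\O(X)}\wh\O(X_\infty)$, and under this identification $\gamma_i$ acts $\wh\O(X_\infty)$-semilinearly by the exponential of the $i$-th component $\theta_i$ of the Higgs field — it is precisely at this point that the exponential $\Exp$ (together with the restriction of the lift $\mathbb X$, which is the canonical one on the toric chart) enters, as the datum that makes the infinitesimal generator of the $\Gamma$-action well-defined on the descended bundle $E$. Replacing $\gamma_i-1$ by its logarithm $\theta_i$ up to a quasi-isomorphism of Koszul complexes — using topological nilpotence of $\gamma_i-1$, handled via the weight-space decomposition of $\wh\O(X_\infty)$ under $\Gamma$ (or, more robustly, a decompletion of $M$ to a finite level of the tower to ensure convergence of the logarithm) — turns the complex into the Koszul complex on $\theta_1,\dots,\theta_d$, which, after reassembling $\sum_i\theta_i\,d\log T_i$ into $\theta\in\uEnd(E)\otimes\Omega^1_X(-1)$ with the correct Tate twists, is the Higgs complex of $(E,\theta)$. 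Finally one checks that these local identifications are independent of the toric chart and glue; this is automatic from the compatibility of the local Simpson correspondences that build $\mathrm{S}_{\mathbb X,\Exp}$, and yields the global isomorphism $R\nu_*V\simeq\mathrm{Higgs}^\bullet(E,\theta)$.

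The main obstacle is the last identification on the nose: controlling all normalisations, Tate twists and the precise role of $\Exp$ when passing from the semilinear $\Gamma$-action to the Higgs field, together with the convergence issue in replacing $\gamma_i-1$ by $\theta_i$, which is what forces the decompletion/weight-space argument. Once the comparison is established compatibly in $V$, the reduction to a toric chart, the gluing, and the passage to global cohomology are formal.
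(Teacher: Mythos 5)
The local computation you outline is essentially the one the paper performs: reduce to a toric affinoid, use Cartan--Leray for the toric tower to identify $R\Gamma(X_{\proet},V)$ with continuous $\Gamma$-cohomology of $M=V(X_\infty)$, compute the latter by the Koszul complex on $\gamma_i-1$, and compare with the Koszul complex on the $\theta_i$ via $\partial_i\mapsto\log(\gamma_i)$ together with the fact that $\log(T+1)/T$ is a unit on a suitably small polydisc (the paper handles convergence by étale-localising until $V$ and $E$ are small and working over $\O_U\langle p^{-\alpha}\partial\rangle$, which plays the role of your decompletion/weight-space step). Up to that point your argument is sound.

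The genuine gap is the final sentence: ``these local identifications are independent of the toric chart and glue; this is automatic.'' It is not. You are trying to glue isomorphisms in the derived category $D(X_{\et})$, and local quasi-isomorphisms do not glue to a global one without a global morphism (or at least a coherent system of comparison data); moreover each local identification depends on the chart, on a choice of trivialisation of the twisting module over the chart, and on the comparison between the restriction of the global lift $\mathbb X$ and the chart-induced lift, so ``independence of the chart'' is exactly what needs proving and is not supplied by the mere existence of the local correspondence. The paper resolves this by reversing the order of quantifiers: it first constructs a single \emph{global} natural morphism $\psi\colon \mathcal C^\ast_{\mathrm{Higgs}}(E,\theta)=\uExt_{T_X}(\O_X,E)\to \uExt_{X_{\proet}}(\O_X,V)=R\nu_\ast V$ in $D(X_{\et})$, obtained by resolving $\O_X$ by the Koszul complex over $T_X=\Sym^\bullet\wtOm_X^\vee$, passing to the $J$-adic completion $\widehat T_J$ along $J=\ker(T_X\to\uEnd(E))$, and tensoring with a global invertible $\nu^\ast\widehat T_J$-module $\widehat L$ lifting the twisting module $\mathcal L_{B_\theta}$ from the construction of $\mathrm{S}_{\mathbb X,\Exp}$. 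Only then does it check, by exactly your local computation, that $\psi$ is a quasi-isomorphism --- a statement that genuinely is local. Your proof becomes complete once you insert such a globally defined comparison map before the chart-by-chart verification.
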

	Here $R\Gamma(X_{\proet},-)$ is pro-\'etale cohomology, while $R\Gamma_{\mathrm{Higgs}}(X,-)$ is Dolbeault cohomology (\Cref{d:Dolbeault-cohom}). In the simplest case of $V=\O_X$, the left hand side equals $R\Gamma_{\et}(X,\Q_p)\otimes_{\Q_p} K$ by Scholzes' Primitive Comparison Theorem \cite[Thm~5.1]{Scholze_p-adicHodgeForRigid}, while the right hand side is equal to Hodge cohomology. Hence, this case recovers the Hodge--Tate decomposition  \Cref{eq:Hodge-decomp}.
	
	\medskip
	
	There is a second, very different way in which we can regard \Cref{t:main-thm} as a generalisation of the Hodge decomposition \Cref{eq:Hodge-decomp} to more general coefficients: In the spirit of Simpson's perspective on non-abelian Hodge theory \cite[\S0]{SimpsonCorrespondence}, the functor $
	\mathrm{S}_{\mathbb X,\Exp}$ can be interpreted as a ``categorical Hodge decomposition for non-abelian coefficient groups'': 
	If $G$ is any rigid group, then 
	the set of isomorphism classes of $G$-torsors on $X_{\proet}$ is given by $H^1_{\proet}(X,G)$. For $G=\GL_r$, this classifies the pro-\'etale vector bundles of rank $r$ on $X$ up to isomorphism.

	For $G=\G_a$, we instead have $H^1_{\proet}(X,\G_a)=H^1_{\et}(X,\Q_p)\otimes_{\Q_p} K$ by the aforementioned Primitive Comparison Theorem. Second, there is a general notion of $G$-Higgs bundles for rigid groups $G$, and the set of isomorphism class of $\G_a$-Higgs bundles is $H^1(X,\O)\oplus H^0(X,\Omega^1_X(-1))$. From this perspective, the isomorphism \Cref{eq:Hodge-decomp} for $n=1$ can be regarded as matching up pro-\'etale $\G_a$-torsors and $\G_a$-Higgs bundles, and indeed one can upgrade this bijection to an equivalence between these two categories. In this sense, \Cref{t:main-thm} provides a categorified generalised Hodge decomposition in degree $n=1$ for $\GL_r$-coefficients.
	
	\subsection{Comparison to previous results}

	Faltings' article \cite{Faltings_SimpsonI} has been very influential and has sparked a great deal of activity in recent years. Its line of argument can roughly be divided into three parts: The local correspondence  between ``small'' objects in terms of a toric chart, the global correspondence between ``small'' objects in terms of a lift, and finally the global $p$-adic Simpson correspondence in the case of projective curves. 
	
	\medskip
	
	The first two steps, which we shall summarise as the ``small correspondence'', have since been the subject of extensive studies, and can now be regarded as being well-understood: This started with the work of Abbes--Gros and Tsuji \cite{AGT-p-adic-Simpson}\cite{Tsuji-localSimpson}, who have reinterpreted and studied in great detail the small correspondence for certain semi-stable schemes with log structures. More recently, the small correspondence has been studied for rigid analytic $X$ under various additional technical assumptions, including the case when $X$ is arithmetic and the pro-\'etale bundle comes from a $\Q_p$-local system due to Liu--Zhu \cite{LiuZhu_RiemannHilbert}, and the case when $X$ has good reduction due to Wang \cite{Wang-Simpson}. Moreover, there are new approaches based on prismatic crystals \cite{MinWang22}\cite{MinWang-Hodge-Tate}, leading to  a relative version of the  small correspondence in families \cite{anschutz2023hodgetate}.

	\medskip

	In contrast, the final step in \cite{Faltings_SimpsonI}, the $p$-adic Simpson correspondence for projective curves, is much less well-understood: It is deduced from the small correspondence by descent from finite \'etale covers, using a subtle construction of ``twisted pullback'' which has recently been studied further by Xu \cite{xu2022parallel}. There are two main reasons why this strategy is limited to curves: Firstly, the descent step relies on the fact  that one can make global differentials on curves $p$-adically small by passing to finite \'etale covers. Second, it uses a semistable reduction assumption, but in higher dimension one does not know if semistable models exist for any finite \'etale covers. For these reasons, this strategy does not generalise to higher dimension.
	
	\medskip
	
	Our approach to the $p$-adic Simpson correspondence is quite different from that of \cite{Faltings_SimpsonI}, and in particular it is logically independent of the global correspondence for small objects. As a consequence, even in the case of curves, \Cref{t:main-thm} is in fact more general than Faltings' result:
	Firstly, due to the different technical foundations, the base field $K$ in \Cref{t:main-thm} is more general, as Faltings assumes that $X$ admits a model $X_0$ over a discretely valued non-archimedean field $L\subseteq K$ with perfect residue field.
	More importantly, however, a new aspect is that our $p$-adic Simpson functor $	\mathrm{S}_{\mathbb X,\Exp}$ depends on a lift of $X$ to $\BdR^+/\xi^2$, whereas in \cite{Faltings_SimpsonI} it is important to instead choose a lift of a semi-stable model over $\O_K$ to the integral subring $\mathrm{A}_{\inf}/\xi^2\subseteq \BdR^+/\xi^2$, because such a datum is necessary for the global small correspondence. 
	
	\medskip

	The relevance of this improvement is that for a smooth proper variety $X_0$ over $L$, the base-change to $K$ always admits a canonical lift to $\BdR^+/\xi^2$. Indeed, one always has a canonical map $L\to \BdR^+/\xi^2$ along which one can base-change, but this does not restrict to a map $\O_L\to \mathrm{A}_{\inf}/\xi^2$ unless $L$ is absolutely unramified. Consequently,  in contrast to Faltings' result, we can eliminate the choice of lift in the important special case that $X$ admits a model $X_0$ over $L$, making it more canonical, in close analogy to the Hodge--Tate decomposition \Cref{eq:Hodge-decomp}.

	\medskip
	
	Apart from Faltings' result for curves, the only other previously known cases of a $p$-adic Simpson correspondence for proper $X$ beyond the small case are the case of line bundles, i.e.\ rank one \cite{heuer-v_lb_rigid},  and the case of projective space $X=\P^n$ \cite[Cor.~8.28]{anschutz2023hodgetate}. Moreover, there are partial results, e.g.\ in the case of vanishing Higgs field \cite{DeningerWerner_Simpson}\cite{wuerthen_vb_on_rigid_var}. But our result is new even when $X$ is an abelian variety of dimension $>1$.

	The cohomological comparison \Cref{t:intro-cohom-comparison} was previously known for the small correspondence under various additional hypothesis: These include the algebraic settings of Abbes--Gros and Tsuji \cite{AGT-p-adic-Simpson}\cite{AbbesGros-II}, the case of good reduction \cite[Thm.~1.1]{Wang-Simpson}\cite{anschutz2023hodgetate}, and  arithmetic settings of Galois-equivariant pro-\'etale vector bundles, namely for $\Q_p$-local systems due to Liu--Zhu \cite{LiuZhu_RiemannHilbert}, and more generally by Min--Wang \cite{MinWang-Hodge-Tate}. For curves, Faltings deduced it from the small case. Beyond these cases, this result is new, already for line bundles.
	
	\subsection{Strategy}
	Conceptually, the approach of this article is rooted in the idea of using $p$-adic analytic moduli spaces to study the $p$-adic Simpson correspondence, initiated in \cite{heuer-geometric-Simpson-Pic}\cite{heuer-sheafified-paCS}. 
	
	In this article, we apply this perspective to the spectral cover, an object going back to the work of Hitchin \cite{HitchinFibration}:
	Let $(E,\theta)$ be a Higgs bundle on $X$. To simplify notation, we set $\wtOm:=\Omega_X^1(-1)$. The datum of $\theta$ is then equivalent to an $\O_X$-algebra homomorphism \[\Sym^\bullet_{\O_X}\wtOm^\vee\to \uEnd(E)\] on $X_{\et}$. Let $B\subseteq \uEnd(E)$ be its image.
	 This is a coherent commutative $\O_X$-algebra and the Higgs field $\theta$ is encoded by the $B$-action on $E$ via the natural map $\Sym^\bullet\wtOm^\vee\to B$. We call the finite cover $X':=\Spa_{\O_X}B\to X$ the spectral variety\footnote{This is slightly different to the spectral variety studied in the context of the Hitchin fibration. Roughly, we replace the characteristic polynomial by the minimal polynomial. But this difference is not essential.} of $(E,\theta)$. Consider now Scholze's pro-\'etale site $X_{\proet}$ of \cite[\S3]{Scholze_p-adicHodgeForRigid} endowed with the completed structure sheaf. The very basic idea for defining $\mathrm{S}^{-1}_{\mathbb X,\Exp}$ is to use the morphism of ringed sites $\nu:X_\proet\to X_{\et}$ and to send 
	 \begin{alignat*}{2}
	 \mathrm{S}^{-1}_{\mathbb X,\Exp}:\big\{\text{\normalfont Higgs bundles on }X\big\}&\to&&\big\{\text{\normalfont pro-\'etale vector bundles on }X\big\} \\
	(E,\theta)\,\,&\mapsto&& \,\,\,\nu^{\ast}E\otimes_{\mathcal B}\mathcal L_{\theta}
	\end{alignat*}
	where $\mathcal B:=\nu^{\ast}B$ and
	where $\mathcal L_{\theta}$ will be a certain invertible (i.e.\ locally free of rank 1) \mbox{$\mathcal B$-module} on $X_{\proet}$ that depends on $(E,\theta)$. 
	In order to define $\mathcal L_{\theta}$, the key idea is to show that the moduli functor of invertible $\mathcal B$-modules is represented by a rigid group variety. To make this precise, let $\SmRig_{K,\et}$ be the site of smooth rigid spaces over $K$ with the \'etale topology and let
	\[\uPic_{X'}\colon\SmRig_{K,\et}\to\mathrm{Ab},\quad \text{sheafification of } \big(S\mapsto  H^1_{\an}(X'\times S,\O^\times)\big)\]
	be the rigid analytic Picard functor of $X'$. We define the pro-\'etale Picard functor of $\mathcal B$ as
	\[\uPic_{\B,\proet}\colon\SmRig_{K,\et}\to \mathrm{Ab},\quad \text{sheafification of } \big(S\mapsto  H^1_{\proet}(X\times S,\B^\times)/H^1_{\proet}(S,\pi_{S,\proet\ast}\B^\times)\big)\]
	where $\pi_S:X\times S\to S$ denotes the projection.
	Our main technical result is now the following ``multiplicative Hodge--Tate sequence for $\B$'':

	\begin{Theorem}\label{t:intro-ses}
		There is a short exact sequence of abelian sheaves on  $\SmRig_{K,\et}$
		\begin{equation}\label{eq:ses-intro}
		0\to \uPic_{X'}\to  \uPic_{\B,\proet} \to H^0(X,B\otimes_{\O_X} \wtOm)\otimes_K \G_a\to 0.
		\end{equation}
		If $\uPic_{X'}$ is representable,
		then \eqref{eq:ses-intro} is represented by a short exact sequence of rigid group varieties over $K$. The associated exact sequence of Lie algebras is the Hodge--Tate sequence
		\begin{equation}\label{eq:HT-ses-B}
		0\to H^1_{\an}(X,B)\to H^1_{\proet}(X,\B)\to H^0(X,B\otimes \wtOm)\to 0.
		\end{equation}
	\end{Theorem}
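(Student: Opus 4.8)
The plan is to compute the direct images of $\mathcal B$ and of $\mathcal B^\times$ along $\nu\colon X_\proet\to X_\et$, first as sheaves on $X_\et$ and then relatively over a base $S\in\SmRig_K$, and to feed the result into the Leray spectral sequence; the representability and the Lie algebra statements then follow formally. To prove the \emph{additive} sequence \eqref{eq:HT-ses-B}: since $\O_X$ is regular, the coherent $\O_X$-algebra $B$ is a perfect complex, so $R\nu_\ast\mathcal B\cong R\nu_\ast\widehat\O\otimes^L_{\O_X}B$ by the projection formula; by Scholze's computation $R^j\nu_\ast\widehat\O=\Omega^j_X(-j)$ \cite{Scholze_p-adicHodgeForRigid}, and these being locally free all Tor-terms vanish, whence $\nu_\ast\mathcal B=B$ and $R^1\nu_\ast\mathcal B=B\otimes_{\O_X}\wtOm$. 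As $X$ is smooth and proper the Hodge--Tate spectral sequence degenerates -- and $\mathbb X$ pins down a canonical splitting of $R\nu_\ast\widehat\O$ -- so the low-degree exact sequence of $H^i(X_\et,R^j\nu_\ast\mathcal B)\Rightarrow H^{i+j}_\proet(X,\mathcal B)$ is precisely \eqref{eq:HT-ses-B}.

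\textbf{Multiplicative Hodge--Tate sequence and the Picard functors.} Next I pass to $\mathcal B^\times$. That $\nu_\ast\mathcal B^\times=B^\times$ is immediate from $\nu_\ast\mathcal B=B$, since a section of $\mathcal B^\times$ and its inverse both lie in $\nu_\ast\mathcal B$. For the higher direct images one filters $\mathcal B^\times$ by a suitable ideal $\mathcal B^{++}$ of topologically nilpotent elements: the logarithm identifies $1+\mathcal B^{++}$ with $\mathcal B^{++}$, its inverse being produced globally from the exponential $\Exp$ for $K$, so that $R^1\nu_\ast\mathcal B^\times\cong R^1\nu_\ast\mathcal B=B\otimes\wtOm$, once one checks that the discrete quotient of $\mathcal B^\times$ by its principal units (built from value groups and residue units) has no higher direct images. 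Running the same computation over $X\times S$ with $\mathcal B_S$ the pullback of $\mathcal B$ and with the relative Hodge sheaf $\Omega^1_{X\times S/S}(-1)=\pi_X^\ast\wtOm$, then pushing forward along $\pi_S$ and sheafifying on $\SmRig_{K,\et}$ -- using properness of $X$ to identify $\pi_{S,\ast}(\mathcal B_S\otimes\pi_X^\ast\wtOm)$ with $H^0(X,B\otimes\wtOm)\otimes_K\G_a$ -- the low-degree exact sequence of the Leray spectral sequence for $\pi_{S,\proet}=\pi_{S,\et}\circ\nu_S$ becomes, after dividing out the base contributions (which is exactly the normalisation built into the definitions of $\uPic_{X'}$ and $\uPic_{\B,\proet}$), the short exact sequence \eqref{eq:ses-intro}: injectivity on the left comes from $\nu_\ast\mathcal B^\times=B^\times$, and exactness on the right (vanishing of $d_2$) from the degeneration pinned down by $\mathbb X$, or equivalently from the $\mathcal B$-linear analogue of the rank-one exponential construction of \cite{heuer-v_lb_rigid}, which produces, \'etale-locally on $S$, a pro-\'etale invertible $\mathcal B_S$-module realising any prescribed section of $B\otimes\wtOm$.

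\textbf{Representability and Lie algebras.} The right-hand term $V:=H^0(X,B\otimes\wtOm)\otimes_K\G_a$ is a rigid vector group. If $\uPic_{X'}$ is representable, then $\uPic_{\B,\proet}$ is an \'etale torsor under the rigid group $\uPic_{X'}$ over the rigid space $V$, hence is itself a rigid group variety, and \eqref{eq:ses-intro} is then a short exact sequence of rigid group varieties over $K$. Applying $\Lie$ gives $\Lie\,\uPic_{X'}=H^1_\an(X',\O_{X'})=H^1_\an(X,B)$ by finite pushforward along $X'\to X$, $\Lie\,V=H^0(X,B\otimes\wtOm)$, and $\Lie\,\uPic_{\B,\proet}=H^1_\proet(X,\mathcal B)$, identifying the tangent space at the identity with the first-order deformations $H^1_\proet(X,1+\epsilon\mathcal B)=H^1_\proet(X,\mathcal B)$ of the trivial $\mathcal B^\times$-torsor; since the quotient map to $V$ is smooth, $\Lie$ is exact on \eqref{eq:ses-intro} and recovers \eqref{eq:HT-ses-B}.

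\textbf{Main obstacle.} The crux is the multiplicative computation $R^1\nu_\ast\mathcal B^\times=B\otimes\wtOm$ together with its relative form. Since $\mathcal B$ is in general neither reduced nor flat over $\O_X$, one must identify an ideal $\mathcal B^{++}$ on which the exponential converges -- this is the precise role of $\Exp$ -- and control the discrete quotient separately, while in the relative version one must keep careful track of the relative Hodge sheaf and of the base contributions that get divided out. Granting this input, the passage to the Picard functors, the representability, and the Lie algebra sequence are formal.
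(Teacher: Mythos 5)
Your computation of $R\nu_\ast\mathcal B$ and $R\nu_\ast\mathcal B^\times$, and hence the left-exact part of \eqref{eq:ses-intro}, follows essentially the paper's route: the projection formula for the perfect complex $B$ together with $R^j\nu_\ast\widehat{\O}=\wtOm^j_X$ gives the additive computation, and a filtration of $\mathcal B^\times$ by principal units $1+p\mathcal B^+$ for an integral model $B^+$ of $B$ handles the multiplicative one. One correction there: the exponential identifying $p\mathcal B^+$ with $1+p\mathcal B^+$ is the ordinary convergent power series on a $p$-adically complete integral sublattice and does not involve the global exponential $\Exp$ for $K$; the choice of $\Exp$ plays no role anywhere in this theorem.

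The genuine gap is the right-exactness, i.e.\ the surjectivity of $\HTlog$ onto $H^0(X,B\otimes\wtOm)\otimes_K\G_a$. You assert that the exponential of the splitting furnished by $\mathbb X$ produces, \'etale-locally on $S$, an invertible $\mathcal B_S$-module realising any prescribed section of $B\otimes\wtOm$. This fails: the Higgs--Tate torsor splits the \emph{additive} sequence, and transporting that splitting to $\mathcal B^\times$ requires $\exp\colon p\mathcal B^+\to\mathcal B^\times$, which converges only on an integral lattice. One therefore only obtains a splitting of $\HTlog$ over an \emph{open subgroup} $\Lambda\otimes\G_a^+$ of the vector group (this is precisely part (2) of the theorem), and an arbitrary section $S\to H^0(X,B\otimes\wtOm)\otimes\G_a$ does not become small on any \'etale cover of $S$, since \'etale localisation does not shrink values. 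The paper closes this gap with a separate geometric argument: the partial splitting shows that the boundary map $\partial$ to $R^2\pi'_{\et\ast}\O^\times$ kills an open subgroup and hence factors through the torsion subsheaf $R^2\pi'_{\et\ast}\O^\times[p^\infty]$; by rigid-analytic proper base change and the Kummer sequence this subsheaf is covered by the locally constant sheaf $\underline{H^2_{\et}(X',\mu_{p^\infty})}$; and any homomorphism from the connected rigid group $H^0(X,B\otimes\wtOm)\otimes\G_a$ to a sheaf admitting such a cover is constant, hence zero. Without an argument of this kind the sequence is not known to be exact on the right (and note the paper's remark that this step crucially uses the sheaf-theoretic, not merely pointwise, formulation). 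Relatedly, your representability step --- that an \'etale torsor under the rigid group $\uPic_{X'}$ over a rigid space is automatically representable --- is not formal for torsors under arbitrary rigid groups; the paper instead writes $\uPic_{\B,\proet}=\bigcup_n U_n$ with $U_1\cong\Lambda\otimes\G_a^+\times\uPic_{X'}$ split, hence representable, and each $U_n\to U_{n-1}$ a finite \'etale torsor under $\underline{H^1_{\et}(X',\mu_{p^n})}$, for which representability is clear.
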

	
	We can now explain how we use the choices in \Cref{t:main-thm}:
	The $\BdR^+/\xi^2$-lift $\mathbb X$ induces a splitting $s_{\mathbb X}$ of \eqref{eq:HT-ses-B}. The natural map $\wtOm^\vee\to B$ induces a section $\tau_B\in H^0(X,B\otimes \wtOm)$. We now use an observation from \cite{HMZ} based on Fargues' work on $p$-divisible rigid groups \cite{Fargues-groupes-analytiques}:
	
	\begin{Theorem}[{\cite[Thm.~6.12]{HMZ}}]\label{t:intro-exp}
		For any commutative rigid group $H$ over $K$ such that $[p]\colon H\to H$ is surjective,  any exponential for $K$ induces a natural Lie exponential
		\[\Exp_H\colon \Lie(H)\to H(K).\]
	\end{Theorem}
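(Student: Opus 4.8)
The plan is to construct $\Exp_H$ out of the $p$-adic logarithm of $H$, using $[p]$-surjectivity to extend it away from the identity, and invoking the exponential for $K$ only to resolve a $p$-power-torsion ambiguity in that extension. Concretely, any rigid group $H$ has a formal group at the identity which, in residue characteristic $0$, is formally isomorphic to $\widehat{\G}_a^{\dim H}$ via its logarithm: there is an open rigid subgroup $H^{\circ}\subseteq H$, isomorphic as a rigid space to an open polydisc around $1$, a morphism of rigid groups $\log_H\colon H^{\circ}\to\Lie(H)\otimes_K\G_a$ that is a local isomorphism at $1$ with $\log_H\circ[p]=p\cdot\log_H$, and (after shrinking) a convergent exponential power series providing an inverse $\exp_0$ of $\log_H$ on a small ball $B_0\subseteq\Lie(H)$ around $0$. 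Since $[p]$ is surjective on $H$, Fargues' theory of $p$-divisible rigid groups \cite{Fargues-groupes-analytiques} applies: passing to the universal cover $\widetilde H:=\varprojlim_{[p]}H$, on which $[p]$ is an isomorphism, one obtains a logarithm $\log_{\widetilde H}$ and an exact sequence of $\Q_p$-Banach groups
\[
0\longrightarrow V_pH\longrightarrow \widetilde{H}^{\circ}(K)\xrightarrow{\ \log_{\widetilde H}\ }\Lie(H)\longrightarrow 0 ,
\]
where $V_pH=T_pH\tf$ and $\widetilde H^{\circ}$ is the preimage of $H^{\circ}$. In the special case $H=\G_m$ this is precisely the sequence split by the datum of an exponential for $K$: then $\Lie(\G_m)=K$, $\log_{\widetilde{\G_m}}$ is induced by $\log\colon 1+\m_K\to K$, and composing a splitting with the projection $\widetilde{\G_m}\to\G_m$ recovers a continuous section $\Exp\colon K\to 1+\m_K$. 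Thus the theorem asserts that this one datum propagates naturally to all such $H$.

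\emph{The construction and the main obstacle.} Given $v\in\Lie(H)$, I would choose $n\gg0$ with $p^nv\in B_0$ and let $\Exp_H(v)\in H(K)$ be an element with $[p^n]\Exp_H(v)=\exp_0(p^nv)$, which exists since $[p^n]$ is surjective on $H(K)$; the functional equation $[p]\circ\exp_0=\exp_0\circ(p\cdot)$ on $B_0$ shows that this condition is then automatic for all larger $n$ and is compatible with imposing $\Exp_H(pv)=[p]\Exp_H(v)$. The set of admissible values of $\Exp_H(v)$ is a torsor under $H[p^n](K)$, and the content of the theorem is to pin it down for all $v$ at once, continuously and functorially, from the chosen splitting for $\G_m$. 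Equivalently, via the previous paragraph one must produce, naturally in $H$, a continuous $\Q_p$-linear section of the displayed sequence extending $\Exp$, and then set $\Exp_H$ equal to the composite of this section with $\widetilde H^{\circ}\to H^{\circ}\hookrightarrow H$. The mechanism I would use is to choose an $\O_K$-lattice $\Lambda\subseteq\Lie(H)$, note $\Lambda\cong\O_K^{\dim H}$ and that $\O_K=\varprojlim\O_K/p^m$ lets one build a section over $\Lambda$ out of the $\G_m$-datum "one $\O_K$-coordinate at a time", and finally show the result is independent of $\Lambda$ (using the functional equation and the $\Q_p$-equivariance already imposed), which is what makes it natural. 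This independence-and-functoriality check is where I expect the real work to lie: a continuous $\Q_p$-linear splitting of the sequence exists abstractly, because finite-dimensional subspaces of $\Q_p$-Banach spaces are complemented, but not canonically, so the whole point is to turn the non-canonical datum $\Exp$ into a construction that is canonical \emph{relative to} $\Exp$ and compatible with all morphisms of $H$.

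\emph{Properties and naturality.} It then remains to record that $\Exp_H$ so defined is independent of the auxiliary $n$, agrees with the convergent exponential $\exp_0$ on $B_0$ (so that it deserves the name ``Lie exponential''), is continuous, is a homomorphism along each line $\Q_p\cdot v\subseteq\Lie(H)$, and satisfies $\mathrm{Ad}(g)\circ\Exp_H=\Exp_H\circ\mathrm{Ad}(g)$ for $g\in H(K)$ --- the non-commutativity of $H$ affecting only these compatibilities and not the construction, since $v$ commutes with itself so $\exp_0(pv)=[p]\exp_0(v)$ still holds. Naturality itself follows because any morphism $\phi\colon H\to H'$ of rigid groups with $[p]$ surjective on both intertwines the logarithms and the $[p]$-maps, and the sections of the previous step were built functorially, so $\phi\circ\Exp_H=\Exp_{H'}\circ\Lie(\phi)$; specialising to $H=\G_m$ one checks the construction returns $\Exp_{\G_m}=\Exp$. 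Apart from the crux identified above, all of this is routine manipulation with convergent power series and with the structure theory of $p$-divisible rigid groups.
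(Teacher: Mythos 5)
First, a point of reference: this paper does not prove the statement — it is quoted from \cite[Thm.~6.12]{HMZ} (and restated as \Cref{t:rigid-group-exp}), with the only indications of proof being the appeal to Fargues' theory of $p$-divisible rigid groups and the remark that Faltings proved the algebraic case. So there is no in-paper proof to compare against line by line; I can only assess your argument on its own terms. Your setup is correct and matches the standard framework: the formal logarithm, the convergent exponential on an open subgroup $U\subseteq\Lie(H)$, the sequence $0\to V_pH\to\widetilde H(K)\to\Lie(H)\to 0$, the observation that the $\G_m$ case of a splitting is literally the datum $\Exp$, and the identification of the ambiguity in $[p^n]$-division as a torsor under $H[p^n](K)$. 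You also correctly locate the crux: extending the splitting from $U$ to all of $\Lie(H)=U\tf$ continuously, functorially, and canonically relative to $\Exp$.

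The problem is that your proposed resolution of that crux is not an argument. You say you would ``build a section over a lattice $\Lambda\cong\O_K^{\dim H}$ out of the $\G_m$-datum one $\O_K$-coordinate at a time, using $\O_K=\varprojlim\O_K/p^m$, then show independence of $\Lambda$.'' Three concrete objections. (a) Producing a section over a lattice $\Lambda$ is not where the difficulty lies: the convergent exponential already gives one over an open sublattice, and passing from there to $\Lambda$ involves finitely many $[p]$-divisions; the genuine problem is the passage from $\Lambda$ to $\Lambda\tf=\Lie(H)$, i.e.\ infinitely many $[p]$-divisions, where the total indeterminacy of a splitting extending $\exp_0$ is a torsor under a group of the shape $\Hom(\Lie(H)/\Lambda,\,H[p^\infty](K))\cong\Hom_{\Z_p}(\Lambda,T_pH)$, which is typically nonzero; $\O_K=\varprojlim\O_K/p^m$ says nothing about this quotient $(K/\O_K)^{\dim H}$. (b) ``Using the $\G_m$-datum coordinate-wise'' presupposes a way to transport a splitting for $\G_m$ along a single line $K\cdot v\subseteq\Lie(H)$, but for the groups at stake — an abelian variety, or the non-algebraic $\uPic^0_{\B,\proet}$ to which this paper applies the theorem — there are no nonconstant homomorphisms to or from $\G_m$ along which $\Exp$ could be pushed or pulled; so the one-dimensional case is not ``the $\G_m$ case'', it is the original problem again. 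This is precisely the nontrivial content of \cite[Thm.~6.12]{HMZ}, which has to go through Fargues' structure theory of $p$-divisible rigid groups (e.g.\ the identification $\Lie(H)=\Hom(\G_a^+,H)\tf$ and the analysis of the universal cover) to manufacture the bridge. (c) Consequently the asserted independence of $\Lambda$ and the functoriality in $H$ have no construction behind them to be checked. You flag this step yourself as ``where the real work lies'', which is an accurate self-diagnosis, but it means the proposal is an outline with the decisive step missing rather than a proof.
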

	For algebraic groups $H$ this is shown in \cite[p.~856f]{Faltings_SimpsonI} for the definition of twisted pullback. We instead apply it to $H=\uPic^0_{\B,\proet}$, which is not algebraic even if $X$ is a curve, to get an invertible $\mathcal B$-module $\mathcal L_\theta:=\Exp_H(s_{\mathbb X}(\tau_B))\in H^1_\proet(X,\mathcal B^\times)$. In summary, we used the diagram
	\[
	\begin{tikzcd}[row sep = 0.5cm]
		\uPic_{X'}(K) \arrow[r]                 & {\uPic_{\B,\proet}}(K)\arrow[r]                    & {H^0(X,\wtOm\otimes B)\phantom{\ni \tau_B.}}                                         \\
		{H^1_{\et}(X,B)} \arrow[r] \arrow[u] & {H^1_\proet(X,\B)} \arrow[u,"\Exp_H"] \arrow[r] & {H^0(X,\wtOm\otimes B)}\ni \tau_B. \arrow[u,equal] \arrow[l, "s_{\mathbb X}"' xshift=7.5, bend right=20,start anchor=north west, end anchor = north east, yshift = -3]
	\end{tikzcd}\]
	
	 In order to make the construction canonical and functorial, we develop a notion of rigidifications of invertible pro-\'etale $\mathcal B$-modules that makes $\mathcal L_\theta$ unique up to unique isomorphism.
	
		The perspective of $p$-adic moduli spaces enters twice in the construction of $\mathcal L_\theta$: Once in order to prove right-exactness in \Cref{t:intro-ses}, and independently when we invoke \Cref{t:intro-exp}.

	The assumption in \Cref{t:intro-ses} that $\uPic_{X'}$ is representable is satisfied when $X$ is algebraic.
	As it currently seems a bit unclear whether it is true in general (see \S\ref{s:2.2}), we also give a more general construction of $\mathcal L_{\theta}$ motivated by \cite[Thm~1.2]{heuer-geometric-Simpson-Pic}: we show that the torsor $\uPic_{\B,\proet}\to \mathcal A_B$ has a reduction of structure group to $\uPic_{X'}[p^\infty]$ that is always representable.
	
	The construction of the functor $\mathrm{S}_{\mathbb X,\Exp}$ from pro-\'etale vector bundles to Higgs bundles is similar: Let $V$ be any pro-\'etale vector bundle on $X$. Then we use that by a construction of Rodr\'iguez Camargo \cite[Thm.~1.0.3]{camargo2022geometric}, one can endow $V$ with a canonical Higgs field
	\[ \theta_V:V\to V\otimes \wtOm.\]
The image $B$ of the induced morphism $\Sym^\bullet \wtOm^\vee\to \nu_\ast\uEnd(V)$ is a coherent $\O_X$-algebra. Set $\B:=\nu^\ast B$, then as before, we can use \Cref{t:intro-ses} and the choices to define an invertible $\B$-module $\mathcal L_{V}$. We then show that $(V,\theta_V)\otimes_{\mathcal B}\mathcal L_{V}^{-1} $ is an analytic Higgs bundle.

\medskip

	Conceptually, this strategy builds on our previous works on moduli spaces \cite{heuer-geometric-Simpson-Pic}\cite{heuer-diamantine-Picard}\cite{heuer-sheafified-paCS}, e.g.\ the simplest  case of $B=\O_X$ of \Cref{t:intro-ses} is closely related to \cite[Thm.~1.3]{heuer-diamantine-Picard}. However, it does not logically rely on their main results: With the exception of an easier special case of \cite[Thm.~6.5]{heuer-sheafified-paCS}, it only uses very limited technical input from these articles.

	During the final stages of this work, we learnt that Daxin Xu was independently studying moduli of invertible $B$-modules in the case of curves. In our subsequent joint work \cite{HX}, we build on this circle of ideas to compare moduli spaces in $p$-adic non-abelian Hodge theory.
	
	\subsection*{Acknowledgements}
	We thank Johannes Ansch\"utz, Bhargav Bhatt, Gabriel Dospinescu, Tongmu He, Alexander Petrov, Yupeng Wang, Annette Werner and Mingjia Zhang for very helpful discussions and for their comments on earlier versions of this article. We thank Gerd Faltings for suggesting an improved treatment of rigidifications that eliminated choices of base points from \Cref{t:p-adicSimpson-proper}, making it completely natural. We moreover thank Arthur-C\'esar Le Bras, Ruochuan Liu, Juan Esteban Rodr\'iguez Camargo,  Peter Scholze, Matti Würthen and Daxin Xu for very helpful conversations. Finally, we thank the referee for very helpful comments.
	This project was funded by Deutsche Forschungsgemeinschaft (DFG, German Research Foundation) -- Project-ID 444845124 -- TRR 326.
	\section{The relative pro-\'etale Picard variety of the spectral cover}
	\subsection{Setup}
	Let $K$ be a complete algebraically closed non-archimedean field over $\Q_p$ as in the introduction. Throughout, we work in the setting of \cite{Scholze_p-adicHodgeForRigid} and thus work with analytic adic spaces in the sense of Huber: By a rigid space, we mean an adic space locally of topologically finite type over $\Spa(K,\O_K)$. We use the following notation from \cite[Def.~2.10]{heuer-sheafified-paCS}:

	\begin{Definition}\label{d:wtOm}
		For any smooth rigid space $X$ over $K$, we write $\wtOm_X:=\Omega_{X|K}^1(-1)$ for the Tate twist of the sheaf of K\"ahler differentials on $X$. We also just write $\wtOm=\wtOm_X$ if $X$ is clear from the context. For any $k\in \N$, we set $\wtOm_X^k=\wedge^k_{\O_X}\wtOm_X$. We can always make a choice of $p$-power unit roots in $K$ to trivialise the Tate twist, but it is better to remember it to get the correct notion of functoriality in $K$, especially in situations where there is a Galois action.
	\end{Definition}
	For any rigid space $X$, we denote by $\SmRig_X$ the category of smooth rigid spaces over $X$, and by $\Perf_{X}$ the category of perfectoid spaces over $X$ in the sense of \cite{perfectoid-spaces}. If $X=\Spa(K)$, we also write this category as $\Perf_K$. We endow it with the \'etale topology, or with the finer v-topology, to make them into sites. We indicate the topology by an index, e.g.\ $\Perf_{K,\et}$.
	
	For any rigid space $X$, there is an associated diamond $X^\diamondsuit$ in the sense of \cite[\S15]{etale-cohomology-of-diamonds}, which we can equivalently regard as a sheaf $X^\diamondsuit:\Perf_{K,v}\to \mathrm{Sets}$. Since the resulting functor $-^\diamondsuit:\SmRig_K\to \Perf_{K,v}$ is fully faithful, we will freely switch back and forth between smooth rigid spaces and their associated diamonds, and therefore usually drop the $-^\diamondsuit$ from notation.

For any rigid space $X$, we denote by $X_\proet$ the pro-\'etale site  defined by Scholze in \cite[\S3]{Scholze_p-adicHodgeForRigid}, endowed with the completed structure sheaf. It will later (in \Cref{p:RnuB^*}) be important that we use this original version of the pro-\'etale site (also referred to as the ``flattened pro-\'etale site''), rather than newer definitions. That being said, this technical subtlety is irrelevant for the statement of \Cref{t:main-thm}, since vector bundles agree among all pro-\'etale sites as well as the v-site (due to a result of Kedlaya--Liu \cite[Thm.~3.5.8]{KedlayaLiu-II}, see \cite[\S1.1]{heuer-v_lb_rigid} for more details).
	
\subsection{The relative pro-\'etale Picard variety of a coherent algebra}\label{s:2.2}

Let $X$ be a proper rigid space over $K$. Then we have the rigid Picard functor
\[ \uPic_{X}:\SmRig_{K,\et}\to \mathrm{Ab}\]
from the category of smooth rigid spaces over $K$ to the category of abelian groups,
defined as the \'etale sheafification of the presheaf
\[Y\mapsto H^1_{\an}(X\times Y,\O^\times).\]
Here on the right hand side, we can equivalently use the \'etale topology by \cite[Prop.~8.2.3]{FvdP}.

It is conjectured that $\uPic_{X}$ is always represented by a smooth rigid group variety. This is known e.g.\ when $X$ is algebraisable, i.e.\ when there is a proper scheme $X_0$ over $K$ such that $X=X_0^\an$: In this case, $\uPic_{X_0}$ is represented by a locally finite type group scheme over $K$ by Theorems of Grothendieck, Murre and Oort, see \cite[\S8.2]{BLR-II}. We then have
$\uPic_{X}=\uPic_{X_0}^\an$
by K\"opf's relative GAGA Theorem \cite{Koepf_GAGA}. 
There are other cases in which $\uPic_X$ is known to be representable. We mention \cite{HartlLutk}\cite{warner2017adic} and also refer to \cite[\S1]{heuer-diamantine-Picard} for an overview.

\begin{Definition}\label{d:Pic_proet}
	Let $B$ be a (commutative) coherent $\O_X$-algebra on $X_{\et}$ and let $\mathcal B:=\nu^\ast B$ be the associated sheaf on $X_{\proet}$. For any smooth rigid space $S$, let us write $\pi_S:X\times S\to S$ for the natural projection. Then we define the pro-\'etale Picard functor of $B$
	\[\uPic_{\B,\proet}:\SmRig_{K,\et}\to \mathrm{Ab}\] to be the \'etale sheafification of the presheaf
	\[S\mapsto H^1_\proet(X\times S,\mathcal B^\times)/H^1_{\proet}(S,\pi_{S,\proet\ast}\B^\times).\]
	Here and in the following, we also regard $B$ and $\mathcal B$ as sheaves on $X\times S$ via pullback from $X$.
\end{Definition}
\begin{Remark}
	We will see in \Cref{p:explicit-description-of-Pic_B} that  the above presheaf is already a sheaf before the sheafification, at least when $B$ is $\O_X$-torsionfree. But we do not need this in this section.
\end{Remark}
The main goal of this section is to prove the following result:
	\begin{Theorem}\label{t:rel-vPic-spectral-var}
		Let $\pi:X\to \Spa(K)$ be a smooth proper rigid space. Let $B$ be a coherent $\O_X$-algebra on $X_{\et}$ and let $f:X':=\Spa_{\O_X}(B)\to X$ be the associated finite morphism. We denote by $\mathcal B:=\nu^\ast B$ the associated sheaf on $X_{\proet}$.  Then we have:
		\begin{enumerate}
			\item
			 There is a canonical short exact sequence of abelian sheaves on $\SmRig_{K,\et}$
			\[
		 0\to \uPic_{X'}\to  \uPic_{\B,\proet} \xrightarrow{\HTlog} H^0(X,B\otimes_{\O_X} \wtOm_X)\otimes_K \G_a\to 0\]
		which is functorial in $B$ and $X$.
		\item The sequence in (1) becomes split over an open subgroup of $H^0(X,B\otimes \wtOm_X)\otimes \G_a$.
	\end{enumerate}
	Assume now furthermore that the rigid analytic Picard functor $\uPic_{X'}$ is representable by a rigid group. For example, this is the case when $X$ is algebraisable. Then we moreover have:
	
	\begin{enumerate}\setcounter{enumi}{2}
		\item  The sequence in (1) is representable by a sequence of rigid group varieties.
		 
		\item The induced sequence of Lie algebras over $K$ obtained by passing to tangent spaces at the identity is canonically isomorphic to the Hodge--Tate sequence of $B$
		\[ 0\to H^1_{\an}(X,B)\to H^1_\proet(X,\B)\to H^0(X,B\otimes \wtOm_X)\to 0.\]
		\item The multiplication map $[p]$ on the identity component of $\uPic_{\B,\proet}$ is finite \'etale.
	\end{enumerate}
	\end{Theorem}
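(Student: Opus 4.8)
I would organise the argument around assertion (1), a relative, multiplicative Hodge--Tate sequence for $\B$; once it is in place, parts (2)--(5) follow by fairly standard manipulations with rigid groups. The plan for (1) is to prove the \emph{additive} statement first and then exponentiate. The input is Scholze's Hodge--Tate comparison, in the form of a canonical isomorphism $R\nu_\ast\wh\O_X\simeq\bigoplus_{j\ge0}\wtOm_X^j[-j]$ in $D(\O_{X_\et})$ for smooth $X$ --- whose lowest graded pieces $\nu_\ast\wh\O_X=\O_X$ and $R^1\nu_\ast\wh\O_X\xrightarrow{\sim}\wtOm_X$ are canonical --- together with its relative version over a smooth base $S$. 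Since $B$ is coherent, hence finitely presented over $\O_X$, and the cohomology sheaves of $R\nu_\ast\wh\O_X$ are finite locally free (so that no higher $\mathrm{Tor}$-terms intervene), the projection formula gives $R\nu_\ast\B\simeq B\otimes^{L}_{\O_X}R\nu_\ast\wh\O_X$, hence $\nu_\ast\B=B$ and $R^1\nu_\ast\B=B\otimes_{\O_X}\wtOm_X$, relatively over $S$ as well. I would then move to the multiplicative sheaves $\B^\times$ and $B^\times$ using the $p$-adic logarithm: on pro-\'etale opens over which $\B$ becomes $p$-adically complete it identifies $1+p\B\xrightarrow{\sim}p\B$, which transports the degree-$\le1$ part of the additive picture to a short exact sequence of abelian sheaves. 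Taking the Leray sequence of $\pi_{S,\proet}$, sheafifying, and dividing out $H^1_\proet(S,\pi_{S,\proet\ast}\B^\times)$ then yields the sequence of (1); here $H^1_\et(X\times S,B^\times)=H^1_\an(X'\times S,\O^\times)$ (valid since $f$ is finite, so that $B^\times=f_\ast\O_{X'}^\times$ has no higher direct images) is what turns the kernel term into $\uPic_{X'}$, left-exactness being the identity $\nu_\ast\nu^\ast L=L$ for invertible $B$-modules $L$; the map $\HTlog$ is the resulting twisted $d\log$-map, and functoriality in $B$ and $X$ comes from that of the Hodge--Tate comparison (the case $B=\O_X$ being the template \cite[Thm.~1.3]{heuer-diamantine-Picard}). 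Granting the representability hypothesis of (3), assertion (4) is the infinitesimal shadow of this construction: restricting $S$ to $\Spa(K[\epsilon]/\epsilon^2)$ and writing units as $1+\epsilon(-)$ identifies the tangent spaces at the identity with $H^1_\an(X,B)$, $H^1_\proet(X,\B)$ and $H^0(X,B\otimes\wtOm_X)$, and the sequence becomes the degree-$1$ Hodge--Tate sequence of $B$, which is short exact by the (canonical) $E_2$-degeneration of the Hodge--Tate spectral sequence for the smooth proper $X$.

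For (2) and (3): by (1), $\uPic_{\B,\proet}$ is a torsor under $\uPic_{X'}$ over the vector group $V:=H^0(X,B\otimes\wtOm_X)\otimes_K\G_a$. On a small enough open polydisc subgroup $V_0\subseteq V$ the power series defining $\log$ converges and produces a continuous group-theoretic section of $\HTlog$ over $V_0$; this is the ``local Simpson'' splitting, an easy special case of \cite[Thm.~6.5]{heuer-sheafified-paCS}, which gives (2). For (3), assume $\uPic_{X'}$ is represented by a rigid group $P$ (e.g.\ when $X$, hence the finite $X'$, is algebraisable, by Grothendieck--Murre--Oort and K\"opf's GAGA, cf.\ \S\ref{s:2.2}). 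Then $V$ is representable, and by (2) the $P$-torsor $\uPic_{\B,\proet}\to V$ trivialises over the covering of $V$ by translates of $V_0$; its total space is therefore represented by a rigid space glued from copies of $P\times V_0$ along transition data valued in $P$, and the group law --- a morphism of \'etale sheaves between representable functors --- is a morphism of rigid varieties, with $\uPic_{X'}$ and $V$ as rigid subgroup and quotient.

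For (5): passing to identity components, (1) gives $0\to\uPic^0_{X'}\to\uPic^0_{\B,\proet}\to V\to0$. As $K$ has characteristic $0$, by (4) the derivative of $[p]$ at the identity is multiplication by $p$ on $\Lie\uPic^0_{\B,\proet}=H^1_\proet(X,\B)$, hence an isomorphism, so $[p]$ is \'etale at the identity and therefore everywhere. On $V$ the map $[p]$ is an isomorphism, and on $\uPic^0_{X'}$ it is finite \'etale by the structure theory of Picard groups of proper rigid varieties --- which presents $\uPic^0_{X'}$, modulo an abelian variety, by copies of $\G_m$ and of $\G_a$ (understood as analytifications of algebraic groups), on each of which $[p]$ is visibly finite \'etale in characteristic $0$; cf.\ \cite[\S1]{heuer-diamantine-Picard}\cite{HartlLutk}\cite{warner2017adic}. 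Hence $\uPic^0_{\B,\proet}[p]$, an extension of $V[p]=0$ by the finite group $\uPic^0_{X'}[p]$, is finite, and $[p]$ is surjective as a map of \'etale sheaves; therefore $[p]$ is the quotient of $\uPic^0_{\B,\proet}$ by the free translation action of the finite constant group $\uPic^0_{\B,\proet}[p]$, i.e.\ a finite \'etale Galois morphism.

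The main obstacle is the multiplicative-to-additive passage inside (1): everything additive is formal once Scholze's computation and the projection formula are available, but constructing $\HTlog$, controlling the quotient by $H^1_\proet(S,\pi_{S,\proet\ast}\B^\times)$, and establishing left-exactness all have to cope with the fact that the logarithm comparison converges only near the identity. That convergence issue is precisely why (1) can be split only over an open subgroup, as in (2), and why in the applications the invertible $\B$-module $\mathcal L_\theta$ of the introduction must be produced via the Lie exponential $\Exp_H$ of \cite[Thm.~6.12]{HMZ} rather than from a global section of $\HTlog$ --- an input which in turn rests on the surjectivity of $[p]$ furnished by (5).
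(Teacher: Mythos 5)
Your additive computation of $R\nu_\ast\B$ via the projection formula and the passage to $\B^\times$ through the logarithm near the identity match the paper's \Cref{p:RnuB^*} and \Cref{l:approx-prop-for-B^+/p}, and they do yield the \emph{left}-exact sequence. The genuine gap is right-exactness in (1), which your sketch treats as if it fell out of the Leray sequence: in fact the Leray sequence continues with a boundary map $\partial$ into $R^2\pi_{\et\ast}B^\times=R^2\pi'_{\et\ast}\O^\times$, and surjectivity of $\HTlog$ is precisely the vanishing of $\partial$. The splitting over an open subgroup only shows that $\partial$ is killed there, hence factors through the $p$-power torsion $R^2\pi'_{\et\ast}\O^\times[p^\infty]$; one cannot then conclude by dividing by $p$, since that presupposes surjectivity of $[p]$ on $\uPic_{\B,\proet}$, which is part of what is being proved. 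The paper's argument here is genuinely geometric: by rigid-analytic proper base change and the Kummer sequence, $R^2\pi'_{\et\ast}\O^\times[p^\infty]$ is an \'etale quotient of the locally constant sheaf $\underline{H^2_{\et}(X',\mu_{p^\infty})}$, and any homomorphism from the connected rigid group $H^0(X,B\otimes\wtOm_X)\otimes\G_a$ into such a quotient is constant, hence zero. Nothing in your proposal supplies this step, and it is the main content of part (1); your closing remark that the exponential ultimately ``rests on the surjectivity of $[p]$ furnished by (5)'' has the logical order reversed for the surjectivity needed inside (1).

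Two further steps fail as written. For (3), the translates $v+V_0$, $v\in H^0(X,B\otimes\wtOm_X)$, of an open subgroup $V_0$ do \emph{not} form an admissible or \'etale cover of the whole vector group: already $p^{-1}V_0$ is quasi-compact and decomposes into infinitely many pairwise disjoint cosets of $V_0$, so no finite subcover exists and the proposed gluing of copies of $P\times V_0$ cannot be carried out. The paper instead exhibits $\uPic_{\B,\proet}$ as an increasing union of opens $U_n=\HTlog^{-1}(p^{1-n}\Lambda\otimes\G_a^+)$ and shows inductively that $U_n\to U_{n-1}$ is a finite \'etale torsor under the finite group $H^1_{\et}(X',\mu_{p^n})$, which is representable. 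For (5), you appeal to a structure theory presenting $\uPic^0_{X'}$ via an abelian variety, $\G_m$'s and $\G_a$'s; but $X'=\Spa_{\O_X}(B)$ is a possibly non-reduced, singular finite cover for which no such theory is available. Both the finiteness of $\ker[p]$ and the surjectivity of $[p]$ on $\uPic^0_{X'}$ come, once more, from the Kummer sequence, proper base change, and the finiteness of $H^1_{\et}(X',\mu_p)$. Finally, $\Spa(K[\epsilon]/\epsilon^2)$ is not an object of $\SmRig_{K,\et}$, so the identification of Lie algebras in (4) cannot be made by testing on dual numbers and requires the detour through $\Hom(\G_a^+,-)\tf$ and Gerritzen's theorem.
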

	\begin{Remark}
	In comparison to the rigid analytic Picard functor $\uPic_{X}$, we have made the following changes in the definition of $\uPic_{\B,\proet}$: 
	\begin{itemize}
		\item We have replaced the analytic (or \'etale) topology  with the pro-\'etale topology.
		\item We have replaced $\O^\times$ with units in a coherent $\O_X$-algebra $B$.
	\end{itemize}
	There is a technical variant of  $\uPic_{\B,\proet}$ where we additionally make the following changes:
	\begin{itemize}
		\item We replace the test category $\SmRig_{X,\et}$ by $\Perf_{K,\et}$.
		\item We replace the pro-\'etale topology with the v-topology.
	\end{itemize}
	It is possible to relate these two versions to each other via ``diamondification'' \cite[\S2]{heuer-diamantine-Picard}. But for the purpose of this article it is much easier for technical reasons (and arguably more natural)  to instead work with rigid test objects. Nevertheless, both approaches would work. We note that up to this non-essential technical difference, we can then recover the ``diamantine v-Picard variety'' from \cite[Thm.~1.3]{heuer-diamantine-Picard} as the simplest special case of $B=\O_X$.
	\end{Remark}
	\begin{Remark}
		In contrast to the natural map $H^1_\et(X,B^\times)\to H^1_\et(X',\O_{X'}^\times)$, which is an isomorphism,
		the natural map $H^1_\proet(X,\B^\times)\to H^1_\proet(X',\O_{X'}^\times)$ is usually not an isomorphism, i.e.\ it makes a difference whether we consider invertible $\B$-modules on $X_\proet$ or invertible $\O_{X'}$-modules on $X'_\proet$. For example, $X'$ could be non-reduced, but as perfectoid algebras are reduced, $\O_{X'}$ on $X'_\proet$ does not see the non-reduced structure. In contrast, the algebra $\mathcal B$ on $X_\proet$ keeps this structure. This motivates the relative setup of \Cref{t:rel-vPic-spectral-var}.
	\end{Remark}
		
	\subsection{The  Leray exact sequence of the sheaf $\mathcal B^\times$}
	Let $X$ be any quasi-compact smooth rigid space and let $B$ be a coherent $\O_X$-module on $X_\an$. We are mostly interested in the case that $B$ is a coherent $\O_X$-algebra, but it seems worth recording some of the results of this section more generally also for modules as this is possible without extra work. We denote by
	\[ \nu:X_\proet\to X_{\et},\quad \lambda:X_\proet\to X_{\an}\]
	the natural morphisms of sites.	
	As before, we set $\B:=\lambda^\ast B$.
	
	\begin{Lemma}\label{l:coherent-integral-sumodule}
		There is a finitely presented $\O_X^+$-module $B_0$ with an isomorphism $B_0\tf\isomarrow B$ of $\O_X$-modules. If $B$ is moreover an $\O_X$-algebra, we can arrange for $B_0$ to be an $\O_X^+$-algebra.
	\end{Lemma}
	\begin{proof}
		When $B$ is an $\O_X$-algebra, let $f:X':=\Spa_{\O_X}(B)\to X$ be the finite morphism defined by $B$.
		We use the results of \cite{BLR-II} on existence of formal models: These guarantee that we can find a formal model $\mathfrak f:\mathfrak X'\to \mathfrak X$ of $f$ which is finite (\cite[Prop.~5.9.17]{AbbesEGR}, or combine \cite[3.3.8.(b) and Thm.~3.3.12]{Lutkebohmert_RigidCurves} using that quasi-finite and proper implies finite). Then $\mathfrak B:=\mathfrak f_{\ast}\O_{\mathfrak X'}$ is a coherent $\O_{\mathfrak X}$-module (\cite[Thm.~2.11.5]{AbbesEGR}) whose rigid  generic fibre is $B$. Let $\eta:X_{\an}\to \mathfrak X$ be the natural morphism of ringed spaces, then  $\eta^{-1}\mathfrak B\otimes_{\eta^{-1}\O_{\mathfrak X}}\O_{X_{\an}}^+$ has the desired properties.
		
		When $B$ is only a coherent $\O_X$-module, we instead use \cite[Prop.~4.8.18.(ii)]{AbbesEGR}.
	\end{proof} 
	For our coherent $\O_X$-module $B$, let us fix a choice of an $\O^+_X$-module $B_0$ like in \Cref{l:coherent-integral-sumodule}. We also write $B_0$ for the associated $\O^+$-module on $X_{\et}$ obtained by pullback. Note that $B_0$ is not necessarily $p$-torsionfree. We let $B_{0}[p^\infty]\subseteq B_0$ be the $p$-power torsion submodule and define an $\O_X^+$-module
	\[ B^+:=B_{0}/B_{0}[p^\infty]\]
	on $X_\et$.
	Equivalently, this is the image of the given map $B_0\to B$ on $X_\et$. We caution that in general, the sheaf $B_0$ may have unbounded $p$-torsion, and thus $B^+$ may not be coherent.
	
	When $B$ is an $\O_X$-algebra, we always choose $B_0$ to be an $\O_X^+$-algebra, then so is $B^+$.
	
	   We make the analogous definitions also on $X_\proet$, namely we define
	\[\mathcal B_0:=\lambda^{\ast}B_0=\nu^{-1} B_0\otimes_{\nu^{-1}\O^+_{X_{\et}}}\O^+_{X_\proet}\to  \mathcal B.\]
	By right-exactness of $\lambda^{\ast}$, this is still finitely presented in the sense that locally in $X_{\proet}$, there is an exact sequence
	$\O^{+m}\to \O^{+n}\to \B_0\to 0.$
	Like before, we set 
	\[ \mathcal B^+:=\mathcal B_{0}/\mathcal B_{0}[p^\infty]\subseteq \B.\]
	
	\begin{Lemma}\label{l:approx-prop-for-B^+/p}
		\begin{enumerate}
		\item For any $k\in \Z$, we have $\B/p^k\B^+=\nu^{-1}(B/p^kB^+)$ on $X_{\proet}$. As a consequence, we have
		$R\nu_{\ast}(\B/p^k\B^+)=B/p^kB^+$.
		\item When $B$ is a coherent $\O_X$-algebra, we have $\B^\times/(1+p\B^+)=\nu^{-1}(B^\times/(1+pB^+))$ on $X_{\proet}$. In particular, we have
		$R\nu_{\ast}(\B^\times/(1+p \B^+))=B^\times/(1+p B^+)$.
		\item Moreover, the exponential then defines an isomorphism of sheaves of groups on $X_\proet$
		\[ \exp:p\B^+\isomarrow 1+p \B^+,\quad x\mapsto \textstyle\sum_{n=0}^\infty\frac{x^n}{n!}\]
		if $p>2$, or $\exp:4\B^+\isomarrow 1+4 \B^+$ if $p=2$. 
		\end{enumerate}
	\end{Lemma}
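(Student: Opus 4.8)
The plan is to take (1) as the substantive statement; (2) and (3) then follow formally from (1) together with the $p$-adic completeness of $\B^+$. Three facts are used repeatedly: (i) $\nu^{-1}$ is exact and monoidal; (ii) after reduction mod $p^n$ the completed pro-\'etale integral structure sheaf agrees with the pullback of the \'etale one, i.e.\ $\O^+_{X_\proet}/p^n=\nu^{-1}(\O^+_{X_\et}/p^n)$ (on the basis of affinoid perfectoids $U=\Spa(R,R^+)$ written as $\varprojlim_i U_i$ with $U_i$ \'etale over $X$, this reflects $R^+/p^n=\bigl(\varinjlim_i\O^+(U_i)\bigr)/p^n$, using $R^+=\bigl(\varinjlim_i\O^+(U_i)\bigr)^{\wedge}_p$); and (iii) pro-\'etale cohomology of the pullback of an \'etale abelian sheaf agrees with its \'etale cohomology, i.e.\ $R\nu_*\nu^{-1}=\id$ on $D^+(X_\et)$.

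For (1): since $\B^+=\lambda^*B^+$ is obtained from $B^+$ by base change along $\O^+_{X_\et}\to\O^+_{X_\proet}$ (right exactness of $\lambda^*$ is what makes $\B^+$ finitely presented), reducing mod $p^n$ and invoking (i) and (ii) identifies $\B^+/p^n\B^+$ with $\nu^{-1}(B^+/p^nB^+)$. As $B^+$ and $\B^+$ are $p$-torsionfree with $B=B^+\tf$ and $\B=\B^+\tf$, multiplication by $p^k$ exhibits $B/p^kB^+$ as $\varinjlim_n B^+/p^nB^+$ along the maps ``multiplication by $p$'', and likewise for $\B$, so $\B/p^k\B^+=\nu^{-1}(B/p^kB^+)$ because $\nu^{-1}$ commutes with filtered colimits. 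The ``consequence'' is then immediate from (iii): $R\nu_*(\B/p^k\B^+)=R\nu_*\nu^{-1}(B/p^kB^+)=B/p^kB^+$.

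For (3): locally on $X_\proet$ the ring sheaf $\B^+$ is $p$-torsionfree and $p$-adically complete, since $\O^+_{X_\proet}$ is $p$-complete and $\B_0$ has locally bounded $p$-power torsion. Hence for $p>2$ the series $\exp(x)=\sum_{n\ge 0}x^n/n!$ converges on $x\in p\B^+$, because $v_p(x^n/n!)\ge n-\tfrac{n-s_p(n)}{p-1}\to\infty$; for $p=2$ this bound only forces convergence after restricting to $x\in 4\B^+$, where $v_2(x^n/n!)\ge n+s_2(n)\to\infty$. Likewise $\log(1+y)=\sum_{n\ge 1}(-1)^{n+1}y^n/n$ converges on $p\B^+$ (resp.\ $4\B^+$) and inverts $\exp$; commutativity of $\B^+$ makes $\exp$ a homomorphism of sheaves of groups, and the formal power series identities finish it.

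For (2): I would reduce to (1), either on the basis of affinoid perfectoids or stalkwise. At a geometric point $\bar x$ of $X_\proet$ the stalk $\B^+_{\bar x}$ is $p$-henselian (being finite over the $p$-complete valuation ring $\O^+_{X_\proet,\bar x}$) and, by (1), has the same reduction mod $p$ as the \'etale stalk $B^+_{\nu(\bar x)}$, so $p$-henselianity gives $(\B^+_{\bar x})^\times/(1+p\B^+_{\bar x})\isomarrow(\B^+_{\bar x}/p)^\times$ and the identical statement \'etale-locally, matching these up. The remaining quotient $\B^\times_{\bar x}/(\B^+_{\bar x})^\times$ records only the multiplicative structure of $\B^+_{\bar x}$ away from $p$-adic units (for $B=\O_X$ it is a value group), which is unchanged by $p$-adic completion, hence agrees with $B^\times_{\nu(\bar x)}/(B^+_{\nu(\bar x)})^\times$; the five lemma applied to $1\to(\B^+)^\times/(1+p\B^+)\to\B^\times/(1+p\B^+)\to\B^\times/(\B^+)^\times\to1$ and its \'etale analogue gives $\B^\times/(1+p\B^+)=\nu^{-1}(B^\times/(1+pB^+))$, whence the $R\nu_*$-statement as in (1). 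The main obstacle is fact (ii)---that reduction mod $p^n$ kills the difference between $\O^+_{X_\proet}$ and $\nu^{-1}\O^+_{X_\et}$ (this difference is exactly the Hodge--Tate phenomenon, which is why one cannot omit the ``mod $p^n$''); granting it, (1) and (3) are routine, and the only extra care in (2) is the claim that the non-integral part of $\B^\times$ is likewise insensitive to $p$-adic completion.
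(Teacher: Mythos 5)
Your part (1) is essentially the paper's own argument: both use a finite presentation of $B_0$ over $\O^+_X$ together with the fact that reduction modulo $p$-powers erases the difference between $\nu^{-1}$ and $\nu^\ast$ of the integral structure sheaf, and then pass to a colimit (the paper tensors the presentation with $\Q_p/p^k\Z_p$ directly, which amounts to your colimit along multiplication by $p$). One caveat: your intermediate claim $\B^+/p^n\B^+=\nu^{-1}(B^+/p^nB^+)$ is not clearly true, since $\B_0=\nu^\ast B_0$ may acquire $p$-torsion not coming from $\nu^{-1}(B_0[p^\infty])$; this is harmless only because the colimit kills all $p$-power torsion, so the argument should be run at the level of $\B_0/p^n$.

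The genuine gap is in (3), and it propagates into (2). You justify convergence of $\exp$ by asserting that $\B^+$ is $p$-adically complete ``since $\O^+_{X_\proet}$ is $p$-complete and $\B_0$ has locally bounded $p$-power torsion.'' The paper explicitly cautions that $B_0$ may have \emph{unbounded} $p$-torsion, so this premise fails in general; and even if $\B_0$ were complete, the quotient $\B^+=\B_0/\B_0[p^\infty]$ is complete only if the relevant submodule is closed. This is exactly where the paper has to work: on sections over an affinoid perfectoid $U$ lying over an affinoid $V\in X_\et$, it identifies $\B^+$ with the image $S$ of $\O^{+m}(V)\otimes_{\O^+(V)}\O^+(U)\to B(V)\otimes_{\O(V)}\O(U)$, notes that $S$ is bounded because it is finitely generated over $\O^+(U)$, and proves $S$ is closed by applying the Banach Open Mapping Theorem to the surjection $\O^m(V)\to B(V)$ (so $S$ is an open, hence closed, subgroup). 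Some argument of this kind is indispensable; without it the series for $\exp$ has no reason to converge in $\B^+$.

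For (2), your reduction via the sequence $1\to(\B^+)^\times/(1+p\B^+)\to\B^\times/(1+p\B^+)\to\B^\times/(\B^+)^\times\to1$ merely relocates the difficulty: the assertion that $\B^\times/(\B^+)^\times$ ``is unchanged by $p$-adic completion'' is precisely the nontrivial approximation statement to be proved, and it does not follow formally from (1); likewise the henselianity input rests on unverified claims about stalks (that $\O^+_{X_\proet,\bar x}$ is a $p$-complete valuation ring, that $\B^+_{\bar x}$ is finite over it). The paper instead argues directly on an affinoid perfectoid tilde-limit $Z\approx\varprojlim_i Z_i$: given $f\in\B^\times(Z)$, one uses (1) to approximate $f$ \emph{and} $f^{-1}$ simultaneously modulo $p^k\B^+$ by elements $g,g'$ from a finite level, deduces $gg'-1\in pB^+(Z_i)$ so that $g$ is already a unit at finite level, and concludes $\phi(g)f^{-1}\in 1+p\B^+(Z)$. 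That simultaneous approximation of a unit and its inverse is the missing idea in your sketch of (2).
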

	In order to avoid having to distinguish $p>2$ and $p=2$ throughout this article, we sometimes summarise \Cref{l:approx-prop-for-B^+/p}.3 by writing $\exp$ as a map $\exp:2p\B^+\isomarrow 1+2p \B^+$.
	\begin{proof}
		\begin{enumerate}[wide, labelwidth=!, labelindent=0pt]
			\item As there is a natural map from right to left,  the statement is local on $X_{\et}$. We can therefore assume that $B_0$ admits a presentation on  $X_{\et}$ of the form
		\[ \O^{+m}_X\to \O^{+n}_X\to B_0\to 0.\]
		This stays right-exact after tensoring with the locally constant sheaf $\Q_p/p^k\Z_p$. Note that
		\[ B_0\otimes_{\Z_p}\Q_p/p^k\Z_p=B/p^kB^+\]
		as we can see from applying $B_0\otimes-$ to the short exact sequence  $p^k\Z_p\to \Q_p\to \Q_p/p^k\Z_p$. Hence we obtain a right-exact sequence
		\begin{equation}\label{eq:pres-B/p^kB+}
		(\O_X/p^k\O^{+}_X)^m\to (\O_X/p^k\O^{+}_X)^n\to B/p^kB^+\to 0.
		\end{equation}
		The same argument also applies on $X_\proet$ to give an analogous presentation of
		\[\B/p^k\B^+=\B_0\otimes_{\Z_p}\Q_p/p^k\Z_p=\nu^\ast(B_0\otimes_{\Z_p}\Q_p/p^k\Z_p)=\nu^\ast(B/p^kB^+).\]
		
		Consider now the natural transformation $\nu^{-1}\to \nu^{\ast}$. This induces an isomorphism on $\O_X/p^k\O^{+}_X$. Applied to the right-exact sequence \Cref{eq:pres-B/p^kB+},  we deduce from this that
		 \[\nu^{-1}(B/p^kB^+)\isomarrow \nu^{\ast}(B/p^kB^+)=\B/p^k\B^+.\]
		
		The second part of the statement now follows from  \cite[Lemma 3.16 and Cor.~3.17]{Scholze_p-adicHodgeForRigid}.
		\setcounter{enumi}{2}
		\item The statement is local on $X$, so we may assume that $X=\Spa(R,R^+)$ is affinoid and that there is a surjection $q:\O^{+m}\to B_0$. It suffices to see that for any affinoid perfectoid $U\in X_\proet$ that lives over some affinoid $V\in X_\et$, the image $S=S_{U\to V}$ of the natural map
		\[\O^{+m}_X(V)\otimes_{\O_X^+(V)}\O_X^+(U)\xrightarrow{q\otimes \id} B(V) \otimes_{\O_X(V)}\O_X(U)\]
		 is $p$-adically complete: This implies that we get a bijective exponential on $S$ (see e.g.\ \cite[Lemma~2.18]{heuer-v_lb_rigid}), and we obtain $\B^+$ from the algebras $S_{U\to V}$ by sheafification in $U\to V$. 
		 
		 As $S$ is finitely generated over $\O_X^+(U)$, it is clearly bounded in the Banach $K$-algebra $B(V) \otimes_{\O_X(V)}\O_X(U)$. To see that it is $p$-adically complete, it thus suffices to see that $S$  is moreover closed. For this we use that $q:\O^m_X(V)\to B(V)$ is surjective, hence $S$
		 is open by the Banach Open Mapping Theorem. As $S$ is an additive subgroup, it is thus also closed.
		
		Note that the same argument applied on $X_\et$ yields an isomorphism $2pB^+\simeq 1+2pB^+$. This shows that $1+pB^+\subseteq B^\times$ (even for $p=2$ since $(1+2a)(1-2a)\in 1+4B^+$ for $a\in B^+$).
		\setcounter{enumi}{1}
		\item This follows from (1) by the same argument as in \cite[Lemma~2.17]{heuer-v_lb_rigid}: Namely, let $Z\approx \varprojlim_i Z_i$ be any pro-\'etale affinoid perfectoid tilde-limit in $X_{\proet}$ with $Z_i\in X_{\et}$. We claim that 
		\[ \B^\times(Z)/(1+p\B^+)(Z)=\textstyle\varinjlim_i B^\times(Z_i)/(1+pB^+)(Z_i).\]
		This shows the desired statement by \cite[Lemma 3.16]{Scholze_p-adicHodgeForRigid}.
		
		Let $f\in \B^\times(Z)$ and let $k\in \N$ be large enough so that $p^kf\in p\B^+(Z)$ and $p^kf^{-1}\in p\B^+(Z)$. Then by (1) we can pass to an \'etale cover of $Z_i$ for some $i$ to find $g$ and $g'$ in $\varinjlim B(Z_i)$ whose images under the map $\phi:\varinjlim B(Z_i)\to \B(Z)$ satisfy $\phi(g)-f\in p^k\B^+(Z)$ and $\phi(g')-f^{-1}\in p^k\B^+(Z)$. Then $\phi(gg'-1)\in p\B^+(Z)$, hence $gg'-1\in pB^+(Z_i)$ for some $i$ large enough by part (1). As we have seen in the proof of (3) that $1+pB^+(Z_i)$ is a subgroup of $B^\times(Z_i)$, this shows that $g \in B^\times(Z_i)$. Since $\phi(g)f^{-1}\in 1+p\B^+(Z)$, this shows surjectivity.

		 Injectivity follows from (1) by a similar argument.
		\qedhere
	\end{enumerate}
	\end{proof}

	\begin{Proposition}\label{p:RnuB^*}
		Let $X$ be a smooth rigid space and let $\nu:X_{\proet}\to X_{\et}$ be the natural map. Let $B$ be a coherent $\O_X$-module on $X$. We consider the functor \[\nu^{\ast}:=\O_{X_{\proet}}\otimes_{\nu^{-1}\O_{X_\et}}\nu^{-1}-:\mathrm{Mod}(X_{\et},\O_{X_\et})\to \mathrm{Mod}(X_{\proet},\O_{X_\proet})\] where we recall that we denote by $\O=\O_{X_{\proet}}$ the completed structure sheaf. Let $\mathcal B=\nu^{\ast}B$.
		\begin{enumerate}[label=(\roman*)]
			\item We have $\mathcal B=\nu^{\ast}B= L\nu^{\ast} B$ via the natural map.
			\item For any $n\in \mathbb Z_{\geq 0}$, we have $R^n\nu_{\ast}\B=
			B\otimes_{\O_X} \wtOm^n_X$.
		\item If $B$ is a coherent $\O_X$-algebra, then $\nu_{\ast}\B^\times= B^\times$ and $R^n\nu_{\ast}\B^\times=B\otimes \wtOm^n_X$ for $n\geq 1$.		\end{enumerate}
	\end{Proposition}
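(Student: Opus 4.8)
The plan is to reduce everything to the completed structure sheaf $\O=\O_{X_\proet}$, for which the analogous statements are known from Scholze's work, using the local finite presentation of $\B_0$ and a colimit/truncation argument. Throughout, the key input is that locally on $X_\et$ we have a presentation $\O_X^{+m}\to\O_X^{+n}\to B_0\to 0$, hence after inverting $p$ and pulling back a presentation $\O_X^m\to\O_X^n\to B\to 0$ on $X_\et$ and $\O^m\to\O^n\to\B\to 0$ on $X_\proet$.

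For (i): since $\nu^*$ on the level of derived categories is $L\nu^*$, and $B$ admits a two-term locally free resolution $[\O_X^m\to\O_X^n]\to B$ only if $B$ is locally free — which in general it is not — I instead argue as follows. The claim $\B=L\nu^*B$ is local on $X_\et$, so I may use the (possibly long) locally free resolution of $B$ by finite free $\O_X$-modules $\cdots\to\O_X^{n_1}\to\O_X^{n_0}\to B\to 0$; applying $L\nu^*$ and using that $L\nu^*\O_X^{n_i}=\nu^*\O_X^{n_i}=\O^{n_i}$ (as $\O_X$ is $\nu^*$-acyclic in the relevant sense, which is \Cref{p:RnuB^*}(ii) for $B=\O_X$ giving $R^n\nu_*\O=\wtOm_X^n$ — wait, that is nonzero; rather the point is that $\O$ computes $L\nu^*\O_X$ by the projection formula and flatness of $\O$ over $\nu^{-1}\O_X$), it suffices to check $\mathrm{Tor}^{\nu^{-1}\O_X}_{>0}(\O,\nu^{-1}B)=0$. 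This vanishing is exactly the statement that $\O$ is $\nu^{-1}\O_X$-flat, which follows since $\O$ is a filtered colimit of finite free $\O_X$-modules along faithfully flat transition maps on a basis of $X_\proet$ by affinoid perfectoids (using \cite[Lemma 3.16]{Scholze_p-adicHodgeForRigid}-type approximation together with \Cref{l:approx-prop-for-B^+/p}); this is the step I expect to require the most care, as one has to organize the colimit so that flatness is visible.

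For (ii): granting (i), compute $R\nu_*\B=R\nu_*L\nu^*B$ and use the projection formula $R\nu_*L\nu^*B=B\otimes^L_{\O_X}R\nu_*\O$, which is legitimate because $B$ has a finite locally free resolution (smooth $X$ means $\O_X$ has finite global dimension locally); then invoke $R\nu_*\O=\bigoplus_n\wtOm_X^n[-n]$ — this is the Hodge--Tate decomposition for $\O$, i.e.\ $R^n\nu_*\O=\wtOm_X^n$, which is precisely the $B=\O_X$ case of what we are proving and is due to Scholze (the Hodge--Tate spectral sequence, e.g.\ \cite[Prop.~3.23]{Scholze_p-adicHodgeForRigid} together with its degeneration), and moreover the decomposition is functorial so the $\mathrm{Tor}$-spectral sequence collapses. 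Since $\wtOm_X^n$ is locally free, $B\otimes^L\wtOm_X^n=B\otimes\wtOm_X^n$ sits in degree $0$, giving $R^n\nu_*\B=B\otimes_{\O_X}\wtOm_X^n$ as claimed.

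For (iii): write $\B^\times$ via the short exact sequences of sheaves of groups on $X_\proet$
\[ 1\to 1+p\B^+\to\B^\times\to\B^\times/(1+p\B^+)\to 1,\qquad 1\to p\B^+\xrightarrow{\ \exp\ }1+p\B^+\to 1,\]
(using $4\B^+$ if $p=2$), so that $\B^\times$ is a successive extension, in the relevant derived sense, of $\B^+$ (with trivial higher $\mathrm{Tor}$, since $\B^+\to\B$ has $p$-torsion kernel killed after inverting $p$, so $R\nu_*$ applied to $\B^+$ and to $\B$ agree after $[\tfrac1p]$, and they are already $\Q_p$-sheaves on stalks) and the sheaf $\B^\times/(1+p\B^+)=\nu^{-1}(B^\times/(1+pB^+))$ from \Cref{l:approx-prop-for-B^+/p}(2), which has $R\nu_*=B^\times/(1+pB^+)$ concentrated in degree $0$. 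Combining: $R^0\nu_*\B^\times$ fits in $1\to R^0\nu_*(1+p\B^+)\to R^0\nu_*\B^\times\to B^\times/(1+pB^+)$ with a connecting map into $R^1\nu_*(1+p\B^+)=R^1\nu_*(p\B^+)=B\otimes\wtOm_X$ via $\exp$ and (ii); a short diagram chase, using that $R^0\nu_*(1+p\B^+)=1+pB^+$ (from \Cref{l:approx-prop-for-B^+/p}(2) again, or directly) and that the composite $B^\times/(1+pB^+)\to B\otimes\wtOm_X$ is zero because it factors through the analytic (non-completed) structure sheaf on which the Higgs/differential-twisting map vanishes, yields $\nu_*\B^\times=B^\times$. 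For $n\ge1$, in degrees $\ge1$ the sheaf $\B^\times/(1+p\B^+)$ contributes nothing (it is $\nu^{-1}$ of a sheaf, hence $R^{>0}\nu_*$ of it vanishes), so $R^n\nu_*\B^\times=R^n\nu_*(1+p\B^+)=R^n\nu_*(p\B^+)=R^n\nu_*\B=B\otimes\wtOm_X^n$ by $\exp$ and (ii). The main obstacle across all three parts is the flatness bookkeeping in (i); once that is in place, (ii) and (iii) are formal consequences of the projection formula and the Hodge--Tate decomposition for $\O$.
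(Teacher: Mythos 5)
Your overall route is the same as the paper's: (i) is the only real content, (ii) follows from (i) by the projection formula together with $R^n\nu_*\O=\wtOm_X^n$, and (iii) is deduced from (ii) via the exponential sequences $0\to p\B^+\to\B^\times\to\B^\times/(1+p\B^+)\to 0$ and $0\to\B^+\to\B\to\B/\B^+\to 0$ together with \Cref{l:approx-prop-for-B^+/p}. Parts (ii) and (iii) of your sketch are essentially correct as written (for $\nu_*\B^\times=B^\times$ the paper simply takes units in $\nu_*\B=B$, which is cleaner than your diagram chase; your claim that the connecting map ``factors through the analytic structure sheaf'' is the one shaky point there, and is unnecessary).

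The place where your proposal is not yet a proof is exactly the step you flag in (i). The assertion that $\O_{X_\proet}$ is $\nu^{-1}\O_{X_\et}$-flat because it is ``a filtered colimit of finite free $\O_X$-modules along faithfully flat transition maps'' is not literally true: on the basis of affinoid perfectoids the completed structure sheaf is obtained from such a colimit only after $p$-adic completion, and for the pro-finite-\'etale part of the tower the transition maps are only \emph{almost} finite \'etale mod $p^k$, not finite free. Completion does not preserve flatness in general, so the colimit argument alone does not close the gap. The paper's proof of (i) instead fixes a finite free resolution of $B$ (using regularity of $R$), rescales to an integral resolution by $\O_X^+$-modules with \emph{bounded} $p$-torsion cohomology, and then checks exactness after pullback along each of the three stages of a basis object $U_3\to U_2\to U_1\to X$: flatness for the \'etale stage, faithful flatness of $A^+\to A_\infty^+$ mod $p^n$ for the toric tower (bounded torsion survives the completed base change, and exactness is recovered after inverting $p$), and almost faithful flatness mod $p^k$ via almost purity for the pro-finite-\'etale stage. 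Some version of this bounded-torsion bookkeeping is unavoidable; without it your flatness claim is an assertion, not an argument.
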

	\begin{Remark}
	Relations like $L\nu^{\ast}B=\nu^\ast B$ are the reason why $X_{\proet}$ is also called the ``flattened pro-\'etale site''. Note that (i) becomes false if we replace  $X_{\proet}$ by $X_v$, which for example includes points $\Spa(K)\to X$. That said, (ii) still has a chance to be true for $X_v$.
	\end{Remark}
	\begin{proof}
		\begin{enumerate}[wide, labelwidth=!, labelindent=5pt,label=(\roman*)]
			\item 
		All statements are local on $X$, so we may assume that $X=\Spa(R)$ is affinoid. Since $R$ is then regular, there exists a finite free resolution
		\[0\to \O_X^{n_1}\to \dots\to \O_X^{n_l}\to B\to 0.\]
		We need to show that this stays exact after applying $\nu^{\ast}$ as this computes $L\nu^{\ast}$.
		
		To do so, we use that a basis of $X_{\proet}$ is given by objects that are of the form
		\[ U_3\xrightarrow{f_3} U_2\xrightarrow{f_2} U_1\xrightarrow{f_1} X,\quad \text{where}\]
		\begin{enumerate}[wide, labelwidth=!, labelindent=12pt]
			\item $f_1$ is an \'etale morphism,
			\item $f_2$ is obtained from a toric chart $U_1\to \mathbb T^d$ as the pullback of the affinoid perfectoid toric cover  $\mathbb T^d_\infty\to \mathbb T^d$, and
			\item $f_3$ is a pro-finite-\'etale map of affinoid perfectoid  objects.
		\end{enumerate}
		It therefore suffices to prove that $f_3^{\ast}f_2^\ast f_1^\ast-$ preserves the exactness.
		
		The functor $f_1^{\ast}$ is exact since $f_1$ is a flat morphism of rigid spaces. Hence we may assume without loss of generality that $U_1=X$.
		
		To show that $f_2$ preserves the exactness, write $\mathbb T^d=\Spa(A,A^+)$ and $\mathbb T^d_\infty=\Spa(A_\infty,A^+_\infty)$. We need to see that $\hotimes_AA_\infty$ preserves the exactness. For this we use that after rescaling the transition morphisms, we can find an integral model
		\begin{equation}\label{eq:int-version-of-res-of-B}
		0\to \O_X^{+,n_1}\to \dots\to \O_X^{+,n_l}\to B^+\to 0
		\end{equation}
		of our complex which has bounded $p$-torsion cohomology. Since $A^+\to A^+_\infty$ is faithfully flat mod $p^n$, the complex still has bounded $p$-torsion cohomology after applying $\hotimes_{A^+}A_\infty^+$, and thus is exact after inverting $p$.
		
		Finally, the map $f_3$ is of the form $\Spa(D,D^+)\to \Spa(C,C^+)$ for perfectoid algebras $C$ and $D$ such that $C^+/p^k\to D^+/p^k$ is a filtered colimit of almost finite \'etale maps by almost purity \cite[Thm.~5.2]{perfectoid-spaces} for any $k\in \N$.  In particular, it is almost faithfully flat mod $p^k$. Thus the same argument as for $f_2$ shows that $f_3^\ast-$ preserves exactness.
\item
		We have $R^n\nu_{\ast}\O=\wtOm_X^n=\Omega_X^{n}(-n)$ by \cite[Prop.~3.23]{Scholze2012Survey}. Since $B$ is a perfect complex on the smooth rigid space $X$, it follows from part (i) and the projection formula that
		\[ R\nu_{\ast}\B=R\nu_{\ast}L\nu^\ast B=B\otimes^L_{\O_X} R\nu_{\ast}\mathcal O=B\otimes_{\O_X} R\nu_{\ast}\mathcal O\]
		where as before we denote by $\O$ the completed structure sheaf on $X_{\proet}$.
		\item That $\nu_{\ast}\B^\times=B^\times$ follows from (ii) by taking units. For the second part, we now use the isomorphism $\exp$ from \Cref{l:approx-prop-for-B^+/p}.(3). This induces a short exact sequence 
		\[
		 0\to \B^+\xrightarrow{\exp(p\cdot -)} \B^\times\to \B^\times/(1+p\B^+)\to 0\]
		 for $p>2$, and similarly for $p=2$ using $\exp(4\cdot -)$ instead.
		Applying $R\nu_{\ast}$ and using \Cref{l:approx-prop-for-B^+/p}.(2), this shows that the exponential defines an isomorphism
		$R^n\nu_{\ast}\B^+=R^n\nu_{\ast}\B^\times$
		for $n> 0$. Second, using the short exact sequence
		\[ 0\to \B^+\to  \B\to \B/\B^+\to 0,\]
		we see from \Cref{l:approx-prop-for-B^+/p}.(1) and 
	 part (ii) that
		\[R^n\nu_{\ast}\B^+=R^n\nu_{\ast}\B=B\otimes_{\O_X} R\nu_{\ast}\mathcal O.\qedhere\]
		\end{enumerate}
	\end{proof}
	\begin{Corollary}\label{c:Leray-seq-for-B^x}
		\begin{enumerate}
		\item When $B$ is a coherent $\O_X$-module, we have a left-exact sequence of $B(X)$-modules, functorial in $X$ and $B$, which we call the Hodge--Tate sequence of $B$:
		\[0\to  H^1_{\et}(X,B)\to  H^1_{\proet}(X,\B)\xrightarrow{\HT_B} H^0(X,\wtOm_X\otimes B).\]
		
		\item When $B$ is a coherent $\O_X$-algebra, we have a left-exact sequence of abelian groups, functorial in $X$ and $B$, which we call the multiplicative Hodge--Tate sequence of $B$:
		\[0\to  H^1_{\et}(X,B^\times)\to  H^1_{\proet}(X,\B^\times)\xrightarrow{\HTlog} H^0(X,\wtOm_X\otimes B).\]
		\end{enumerate}
	\end{Corollary}
	\begin{proof}
		We form the Leray sequence and use \Cref{p:RnuB^*}.(ii) and (iii).
	\end{proof}
	Applying the Corollary to $X\times Y$ for any $Y\in\SmRig_{K,\et}$, it follows that for varying $Y$, we obtain short exact sequences, functorial in $X$ and $Y$,
	\[ 0\to H^1_{\et}(X\times Y,B^\times)\to H^1_{\proet}(X\times Y,\B^\times)\to H^0(X\times Y,\wtOm_{X\times Y}\otimes_{\O_X} B).\]
	Recall now that  we have the product formula for differentials $\Omega^1_{X\times Y}=\pi_X^\ast\Omega^1_X\oplus \pi_Y^{\ast}\Omega_Y^1$
	where $\pi_X:X\times Y\to X$ and $\pi_Y:X\times Y\to Y$ denote the respective projections. We now use that $X$ is proper. By Kiehl's Proper Mapping Theorem and the resulting rigid version of ``cohomology and base-change'' \cite[Cor.~3.10, Thm.~3.18.(b)]{heuer-relative-HT}, this shows that
	\[H^0(X\times Y,\wtOm_{X\times Y}\otimes  B)=\Big(H^0(X,\wtOm_X\otimes B)\otimes_K\O(Y) \Big) \oplus \Big(H^0(Y,\wtOm_Y)\otimes_K B(X)\Big).\]
	Similarly, the pro-étale version of ``cohomology and base-change'' \cite[Thm.~3.18.(c)]{heuer-relative-HT} shows
	\begin{equation}\label{eq:coh-bc}
	\pi_{Y\ast}\B=B(X)\otimes_K \O_{Y_\proet}. 
	\end{equation}
	Using functoriality of the above sequence applied to the morphism $\pi_Y:X\times Y\to Y$, we thus obtain the following morphism of left-exact sequences:
	\[\begin{tikzcd}
		0 \arrow[r] & {H^1_{\et}(Y,\pi_{Y\ast}B^\times)} \arrow[d] \arrow[r] & {H^1_{\proet}(Y,\pi_{Y\ast}\B^\times)} \arrow[d] \arrow[r] & {H^0(Y,\wtOm_Y)\otimes_KB(X)} \arrow[d] \\
		0 \arrow[r] & {H^1_{\et}(X\times Y,B^\times)} \arrow[r] \arrow[r]  & {H^1_{\proet}(X\times Y,\B^\times)} \arrow[r] & {H^0(X\times Y,\wtOm_{X\times Y}\otimes B)}.
	\end{tikzcd}\]
	Recall from \Cref{p:RnuB^*} that the top line arises from the Leray sequence of the morphism $Y_\proet\to Y_\et$ for the sheaf $\pi_{Y\ast}\B^\times$. As the fourth term in this sequence is $H^2_{\et}(Y,\pi_{Y\ast}B^\times)$, we see that the top right map is surjective after sheafification in $Y$. Upon sheafification, it follows that on cokernels, we obtain a left-exact sequence of sheaves on $\SmRig_{K,\et}$
	\[ 0\to \uPic_{X'}\to\uPic_{\B,\proet}\to  H^0(X,\wtOm_X\otimes B)\otimes_K \G_a\]
	as described in
	 \Cref{t:rel-vPic-spectral-var}. 
	 
	To get the desired short exact sequence, we are thus left to prove the right-exactness, which is more difficult. As a preparation, we first discuss the partial splitting. For this we use:

	\subsection{The Higgs--Tate torsor of Abbes--Gros}
	We now define the analogue of the Higgs--Tate torsor of Abbes--Gros \cite[II.10.3]{AGT-p-adic-Simpson} in the analytic setting of the pro-\'etale site.
	
	\begin{Definition}\label{d:Higgs-Tate}
		Let $X$ be a smooth rigid space over $K$ and let $\mathbb X$ be a flat lift of $X$ to $\BdR^+/\xi^2$.  Via the homeomorphism $|\mathbb X|=|X|$, we may regard $\O_{\mathbb X}$ as a sheaf on $X_{\an}$. We define a pro-\'etale sheaf $L_{\mathbb X}$ on $X_{\proet}$ as the subsheaf of $\uHom(\lambda^{-1}\O_{\mathbb X},\mathbb B_{\dR}^+/\xi^2)$ defined as follows:
		\[L_{\mathbb X}:=\left\{\text{\begin{tabular}{l} {\parbox{4.5cm}{homomorphisms $\widetilde{\varphi}$ of sheaves of $\BdR^+/\xi^2$-algebras on $X_{\proet}$ making the following diagram of sheaves commutative:}}\end{tabular}}
		\begin{tikzcd}[row sep =0.55cm]
			\lambda^{-1}\O_{\mathbb X} \arrow[d] \arrow[r,"\widetilde\varphi"] & \mathbb B_{\dR}^+/\xi^2 \arrow[d] \\
			\lambda^{-1}\mathcal O_X \arrow[r]             & \O_{X_{\proet}}    
		\end{tikzcd}\right\}.\]
	\end{Definition}
	We note that such a flat lift $\mathbb X$  of $X$ to $\BdR^+/\xi^2$ always exists when $X$ is proper, see \cite[Prop.~7.4.4]{Guo_HodgeTate}. 
	\begin{Lemma}\label{l:Faltings-ext}
		\begin{enumerate}
			\item 
		$L_{\mathbb X}$ is a pro-\'etale torsor under $\nu^\ast\wtOm_X^{\vee}$ on $X_{\proet}$. 
		\item 	$L_{\mathbb X}$ is natural in $(X,\mathbb X)$ in the following sense: Let $f:Y\to X$ be a morphism of smooth rigid spaces over $K$ with a lift $\tilde{f}:\mathbb Y\to \mathbb X$ to $\BdR^+/\xi^2$. Then $\tilde{f}$ induces an isomorphism of torsors between $f^\ast L_{\mathbb X}$ and the pushout of  $L_{\mathbb Y}$ along $\nu^\ast\wtOm_Y^\vee\to f^{\ast}\nu^\ast\wtOm_X^\vee$.
		\end{enumerate}
	\end{Lemma} 
	This is closely related to the discussion in \cite[\S2.2]{Wang-Simpson}, see also \Cref{r:Faltings-extension}.
	\begin{proof}
		For part 1, assume first that $X=\Spa(R)$ is affinoid with a lift $\mathbb X=\Spa(\widetilde{R})$. Let $Y\to X$ be any affinoid perfectoid object of $X_\proet$, and write $S=\O(Y)$. Then $L_{\mathbb X}(Y)$ describes the dotted morphisms making the following diagram commutative:
		\[
		\begin{tikzcd}
			\BdR^+/\xi^2\arrow[r]\arrow[d]&\widetilde{R} \arrow[d] \arrow[r, dashed,"\wt\varphi"] & A_{\inf}(S)/\xi^2\tf \arrow[d] \\	K\arrow[r]&
			R \arrow[r]                                          & S.                            
		\end{tikzcd}\]
		As $\wt{R}$ is formally smooth over $\BdR^+/\xi^2$ and the rightmost vertical map is a square-zero thickening, such a lift $\wt\varphi$ always exists. The kernel of $\mathbb B_{\dR}^+/\xi^2\to \mathcal O_{X}$ is given by the Tate twist $\mathcal O_{X}(1)$. Hence, by deformation theory, for any two such lifts, the difference is a derivation $\widetilde{R}\to S(1)$, or equivalently, an $R$-linear morphism $\wtOm^1_R\to S$. 
		The resulting action of $\nu^\ast\wtOm^\vee_X(Y)$ on $L_{\mathbb X}(Y)$ is clearly natural in $Y\in X_\proet$. Hence  it defines a $\nu^\ast\wtOm_X^{\vee}$-action on $L_{\mathbb X}$.
		
		 The above local consideration then shows that  $L_{\mathbb X}$ is a torsor under this $\nu^\ast\wtOm^{\vee}_X$-action.
		 
		 For part 2, we first note that $f^{\ast}L_{\mathbb X}$ on $Y_\proet$  is clearly given by the homomorphisms $f^{-1}\lambda^{-1}\mathcal O_{\mathbb X}\to \mathbb B_{\dR}^+/\xi^2$ lifting the map $f^{-1}\lambda^{-1}\mathcal O_X\to \lambda^{-1}\mathcal O_Y\to \mathcal O_{Y_\proet}$. The isomorphism is now induced by the map $L_{\mathbb Y}\to f^{\ast}L_{\mathbb X}$ defined by sending any $\tilde{\varphi}:\lambda^{-1}\mathcal O_{\mathbb Y}\to \mathbb B_{\dR}^+/\xi^2$ in $L_{\mathbb Y}$ to its composition with $\tilde{f}:f^{-1}\lambda^{-1}\mathcal O_{\mathbb X}\to \lambda^{-1}\mathcal O_{\mathbb Y}$. This induces the desired isomorphism of torsors because this map is linear over $\nu^\ast\wtOm_X^\vee\to f^{\ast}\nu^\ast\wtOm_X^\vee$.
		\end{proof}
	
	\begin{Definition}\label{d:LXB}
		Let $X$ be a smooth rigid space, let $\mathbb X$ be a flat $\BdR^+/\xi^2$-lift of $X$ and let $B$ be a coherent $\O_X$-module. For any  $\theta\in H^0(X,B\otimes \wtOm)$, or equivalently any morphism $\theta:\wtOm^\vee\to B$, we obtain 
		an associated pro-\'etale torsor under $\mathcal B:=\nu^{\ast}B$ on $X_{\proet}$ as the pushout along $\nu^\ast\theta$,
		\[ L_{\mathbb X,B,\theta}:= L_{\mathbb X}\times^{\nu^\ast\wtOm^\vee}\B.\]
		In particular, for any  $\Sym^\bullet_{\O_X}\wtOm^\vee$-algebra $B$ on $X_{\et}$ that is coherent over $\O_X$, we obtain such a torsor associated to the composition $\wtOm^\vee\to\Sym^\bullet_{\O_X}\wtOm^\vee\to B$. We simply denote it by $L_{\mathbb X,B}$.
	\end{Definition}
	\begin{Proposition}\label{p:splitting-induced-by-HT-torsor}
		Let $X$ be a smooth rigid space, let $\mathbb X$ be a flat $\BdR^+/\xi^2$-lift of $X$ and let $B$ be a coherent $\O_X$-module. Sending any $\theta\in H^0(X,B\otimes\wtOm)$  to the class of $L_{\mathbb X,B,\theta}$ defines a natural section
		\[ s_{\mathbb X,B}:H^0(X,B\otimes\wtOm)\to H^1_{\proet}(X,\B)\]
		of the Hodge--Tate map $\HT_B$ from \Cref{c:Leray-seq-for-B^x}.1. The map  $s_{\mathbb X,B}$ is functorial in $\mathbb X$ and $B$.
	\end{Proposition}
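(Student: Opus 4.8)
The plan is to realise $s_{\mathbb X,B}$ as a composite of additive, functorial maps, and then to verify the section property by reducing to a single universal case. Recall first that for an abelian sheaf $\mathcal G$ on $X_\proet$ the set of isomorphism classes of $\mathcal G$-torsors is $H^1_\proet(X,\mathcal G)$, and that pushing a torsor out along a homomorphism $\phi\colon\mathcal G\to\mathcal G'$ corresponds to applying $H^1_\proet(X,\phi)$. Since $\nu^\ast$ and $H^1_\proet(X,-)$ are additive functors, by \Cref{d:LXB} the assignment $\theta\mapsto[L_{\mathbb X,B,\theta}]$ is the composite
\[ H^0(X,B\otimes\wtOm)=\Hom_{\O_X}(\wtOm^\vee,B)\xrightarrow{\ \nu^\ast\ }\Hom_{X_\proet}(\nu^\ast\wtOm^\vee,\B)\longrightarrow H^1_\proet(X,\B),\quad \psi\longmapsto\psi_\ast[L_{\mathbb X}], \]
hence is a homomorphism of abelian groups and $s_{\mathbb X,B}$ is well defined. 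For a morphism $\beta\colon B\to B'$ of coherent $\O_X$-modules, \Cref{d:LXB} gives $L_{\mathbb X,B',\beta\circ\theta}=(\nu^\ast\beta)_\ast L_{\mathbb X,B,\theta}$, which together with the naturality of the Hodge--Tate maps $\HT_{B}$ in $B$ (naturality of the Leray spectral sequence in the coefficient sheaf) yields functoriality in $B$; naturality in the pair $(X,\mathbb X)$ follows in the same way from the evident compatibility of $L_{\mathbb X}$ with pullback along a morphism $(Y,\mathbb Y)\to(X,\mathbb X)$.

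It remains to prove $\HT_B\circ s_{\mathbb X,B}=\id$, where $\HT_B\colon H^1_\proet(X,\B)\to H^0(X,R^1\nu_\ast\B)=H^0(X,B\otimes\wtOm)$ is the Leray edge map, using \Cref{p:RnuB^*}.(ii). By the projection formula occurring in the proof of \Cref{p:RnuB^*}.(ii), the map $R^1\nu_\ast(\nu^\ast\theta)$ is identified with $\theta\otimes\id_{\wtOm}\colon\wtOm^\vee\otimes\wtOm\to B\otimes\wtOm$ for any $\theta\colon\wtOm^\vee\to B$. Since $s_{\mathbb X,B}(\theta)=(\nu^\ast\theta)_\ast[L_{\mathbb X}]$, naturality of the edge map gives $\HT_B(s_{\mathbb X,B}(\theta))=(\theta\otimes\id_{\wtOm})\bigl(\HT_{\wtOm^\vee}([L_{\mathbb X}])\bigr)$, where $\HT_{\wtOm^\vee}\colon H^1_\proet(X,\nu^\ast\wtOm^\vee)\to H^0(X,\wtOm^\vee\otimes\wtOm)$ denotes the edge map for $B=\wtOm^\vee$. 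Hence it is enough to show $\HT_{\wtOm^\vee}([L_{\mathbb X}])=\delta$, where $\delta\in H^0(X,\wtOm^\vee\otimes\wtOm)$ is the image of $1$ under the coevaluation $\O_X\to\wtOm^\vee\otimes\wtOm$: granting this, the triangle identity gives $(\theta\otimes\id_{\wtOm})(\delta)=\theta$ for every $\theta\in\Hom_{\O_X}(\wtOm^\vee,B)$, which is precisely $\HT_B(s_{\mathbb X,B}(\theta))=\theta$.

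Finally, the identity $\HT_{\wtOm^\vee}([L_{\mathbb X}])=\delta$ is the computational content of \Cref{l:Faltings-ext}: there the class of $L_{\mathbb X}$ in $H^1_\proet(X,\nu^\ast\wtOm^\vee)$ is identified, under the duality between extensions of $\O$ by $\nu^\ast\wtOm^\vee$ and extensions of $\nu^\ast\wtOm$ by $\O$, with the class of the Faltings extension $0\to\O\to E\to\nu^\ast\wtOm\to0$; and by construction the image of the latter under the Leray edge map is the identity of $\wtOm$, equivalently $\delta$. This is exactly the statement that the connecting map $\wtOm=\nu_\ast\nu^\ast\wtOm\to R^1\nu_\ast\O=\wtOm$ of the Faltings extension is $\id$, which is the normalisation of the isomorphism $R^1\nu_\ast\O\cong\wtOm$ of \cite[Prop.~3.23]{Scholze2012Survey} used in \Cref{p:RnuB^*}.(ii). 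The one genuinely delicate point is this last identification: one has to pin down the duality of \Cref{l:Faltings-ext} and the normalisation of \cite[Prop.~3.23]{Scholze2012Survey} precisely enough — keeping track of signs and of the transpose isomorphism $\End(\wtOm)\cong\End(\wtOm^\vee)$ — to land on $\delta$ rather than $-\delta$. Everything else is formal.
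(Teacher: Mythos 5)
Your formal reductions are sound and match the paper's: additivity of pushout in the torsor class, functoriality in $B$ via $L_{\mathbb X,B',\beta\circ\theta}=(\nu^\ast\beta)_\ast L_{\mathbb X,B,\theta}$, and the reduction of $\HT_B\circ s_{\mathbb X,B}=\id$ to the single universal case $B=\wtOm^\vee$, $\theta=\id$, using the projection-formula identification of \Cref{p:RnuB^*}.(ii). This is exactly how the paper begins. The problem is the last step, which is where all the content lies: you dispose of the identity $\HT_{\wtOm^\vee}([L_{\mathbb X}])=\delta$ by citing the second clause of \Cref{l:Faltings-ext} (duality of $[L_{\mathbb X}]$ with the Faltings extension) together with the assertion that the Faltings extension's Leray edge image is $\id_{\wtOm}$ ``by construction''. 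Neither of these is available to you here. In this paper the relation of $L_{\mathbb X}$ to the Faltings extension is explicitly postponed inside the proof of \Cref{l:Faltings-ext} and is only established as a byproduct of the proof of the present Proposition (the paper closes with ``The proof of [Scholze, Lemma 3.24] also shows the relation to the Faltings extension. This finishes the proof of \Cref{l:Faltings-ext}''), so invoking it is circular. And the normalisation claim about the Faltings extension is not ``by construction'': the isomorphism $R^1\nu_\ast\O\cong\wtOm$ of \cite[Prop.~3.23]{Scholze2012Survey} is normalised via the Kummer/$d\log$ description of \cite[Lemma 3.24]{Scholze2012Survey}, not via the Faltings extension, and the compatibility between the two is precisely the nontrivial computation you have flagged as ``the one genuinely delicate point'' but not carried out.

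What the paper actually does at this point is the computation you are missing: it localises to a toric $X=\Spa(R)$ with chart $T_1,\dots,T_d$, uses formal smoothness/\'etaleness to produce an explicit section $\wt\varphi$ of $L_{\mathbb X}$ over the toric cover $X_\infty$, identifies the resulting $\Delta$-cocycle as $\gamma\mapsto\sum_i c_i(\gamma)\,\partial_i$ (\Cref{l:explicit-description}), and then matches the component $\gamma\mapsto c_i(\gamma)$ against the class $\HT(1)^{-1}(\tfrac{dT_i}{T_i})$ using the characterisation of $\HT(1)$ in \cite[Lemma 3.24]{Scholze2012Survey}. Some such explicit local cocycle computation is unavoidable here, because it is what pins down the normalisation (including the sign you worry about). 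To repair your argument you would either have to perform this computation directly, or first give an independent proof that $[L_{\mathbb X}]$ is dual to the Faltings extension and that the Faltings extension's boundary map is the identity under Scholze's normalisation --- which amounts to the same local computation in different clothing.
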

	\begin{proof}
		It is clear from the construction that $(B,\theta)\mapsto L_{\mathbb X,B,\theta}$ is functorial in $B$. To see that $\HT_B(s_{\mathbb X,B}(\theta))=\theta$, we may thus reduce to the universal case $B=\wtOm^\vee$, $\theta=\id\in H^0(X,\wtOm\otimes \wtOm^\vee)$. 
		
		The construction is moreover functorial in $\mathbb X$. In particular the statement is local, so we may reduce to the case that $X=\Spa(R)$ is toric. We can then make $L_{\mathbb X}$ explicit, as follows: 
		
		Fix a toric chart $X\to \mathbb T^d$ and denote by $T_1,\dots,T_d$ the induced coordinates on $X$.  We then have the standard basis $\tfrac{dT_1}{T_1},\dots,\tfrac{dT_d}{T_d}$ of $\Omega_R^1$. Let $\partial_1,\dots,\partial_d\in \Omega_R^{1\vee}$  be its dual basis. Let $X_\infty=\Spa(R_\infty)\to X$ be the toric cover.
	For any lift $\mathbb X=\Spa(\wt R)$ of $X$, there exists by formal smoothness a lift of the map $K\langle T_1^\pm,\dots,T_d^\pm\rangle\to R$ induced by the chart to an \'etale morphism
	\[\BdR^+/\xi^2\langle T_1^\pm,\dots,T_d^\pm\rangle\to \wt R\]
	 of $\BdR^+/\xi^2$-algebras.
	Any choice of such a lift induces a section of $L_{\mathbb X}(X_\infty)$, as follows:  The morphism
$\BdR^+/\xi^2\langle T_1^\pm,\dots,T_d^\pm\rangle\to \mathbb B^+_{\dR}/\xi^2(X_\infty)$, $T_i\mapsto [T_i^{1/p^\infty}]$
	extends by formal \'etaleness to a unique morphism
	\[\wt\varphi:\widetilde{R}\to \mathbb B^+_{\dR}/\xi^2(X_\infty)\]
	lifting the map $R\to R_\infty$. Let $\Delta$ be the Galois group of $X_\infty \to X$. We write $c_i:\Delta\to \Z_p(1)=\varprojlim_{n\in\N} \mu_{p^n}(K)$ for the map determined by saying that for any $\gamma\in \Delta$, we have
	\[ \gamma \cdot [T_i^{1/p^\infty}]= [c_i(\gamma)]\cdot [T_i^{1/p^\infty}].\]
	
	\begin{Lemma}\label{l:explicit-description}
		Under the identification $L_{\mathbb X}(X_\infty)=\Omega^{\vee}_R\otimes_RR_\infty(1)$ induced by the section $\wt\varphi$, the $\Delta$-action on the right is given by the continuous 1-cocycle
		\[\textstyle \Delta\to \wtOm_R^\vee=\Omega_R^{1\vee}(1),\quad \gamma\mapsto \sum_{i=1}^d c_i(\gamma)\cdot \partial_i.\]
	\end{Lemma}
	\begin{proof}
		Let $R_\infty\{1\}:=\ker(B^+_{\dR}/\xi^2(R_\infty)\to R_\infty)$.
		For any $\sum a_i\partial_i\in \Hom(\Omega_R,R_\infty\{1\})$, the corresponding element of $L_{\mathbb X}(R_\infty)$ is uniquely characterised (by formal \'etaleness) by saying that it sends $T_i\mapsto [T_i^{1/p^\infty}]+a_i$. In order to describe the $\gamma$-action, we thus need to compute
		\[ \gamma([T_i^{1/p^\infty}]+a_i)-[T_i^{1/p^\infty}] =(c_i(\gamma)-1)[T_i^{1/p^\infty}]+\gamma a_i\]
		where $c_i(\gamma)-1\in R_\infty\{1\}$. Indeed, recall that for any $\varepsilon \in \Z_p(1)$ and $a\in \Z_p$, we have
		$\varepsilon^{a}-1\equiv a(\varepsilon-1)\bmod \xi^2$, which induces a canonical isomorphism $R(1)\to R\{1\}, r \varepsilon \to r(\varepsilon-1)$. Under this identification, we see that the cocycle has the desired description.
	\end{proof}
	It now suffices to consider for any $\frac{dT_i}{T_i}\in H^0(X,\Omega^1)$ with associated map $\wtOm^\vee\to \O(1)$ the $\O(1)$-torsor $L_{\mathbb X}\times^{\nu^\ast\wtOm^\vee} \O(1)$. We need to see that its image under $\HT(1)$ is $\frac{dT_i}{T_i}$. By \Cref{l:explicit-description}, the associated cocycle $\Delta\to \wtOm^\vee_R\to R(1)$ is of the form $\gamma\mapsto c_i(\gamma)$.
	The  statement now follows from \Cref{l:explicit-description} by the characterisation of the map $\HT(1):H^1_{\proet}(X,\O(1))\to H^0(X,\Omega)$  given in \cite[Lemma 3.24]{Scholze2012Survey}: Indeed, sending $T_i\in \O_{X_\et}^\times$ around the bottom left corner of the diagram described in the cited lemma defines $\HT(1)^{-1}(\frac{dT_i}{T_i})\in H^1_{\proet}(X,\O(1))$. Going around the top right yields the class defined by the 1-cocycle $\Delta\to R(1)$, $\gamma\mapsto c_i(\gamma)$.
	\end{proof}
	
	\begin{Remark}\label{r:Faltings-extension}
		Assume that $X$ has a model $X_0$ over a discretely valued field $L\subseteq K$ with perfect  residue field and that $\mathbb X=X_0\times_{L}\BdR^+/\xi^2$. Then one can prove that the associated class in $H^1_{\proet}(X,\nu^\ast\wtOm_X^{\vee})$ of $L_{\mathbb X}$, considered as an extension of $\O$ by $\nu^\ast\wtOm_X^{\vee}$, is dual to the Faltings extension
		\[ 0\to \O\to E\to \nu^\ast\wtOm_X^{1}\to 0\]
		on $X_\proet$ from \cite[Corollary 6.14]{Scholze_p-adicHodgeForRigid}. Indeed,
		locally on $X$, this can be seen by comparing the proof of \Cref{l:explicit-description} to that of \cite[Lemma~3.24]{Scholze2012Survey}.
		
		In fact, more generally, one can use $L_{\mathbb X}^\vee$ to define a Faltings extension without assuming existence of a model $X_0$, instead depending on a lift $\mathbb X$. See \cite[\S2.2]{Wang-Simpson} for a related discussion.
	\end{Remark}
	
	\subsection{The partial splitting}\label{s:partial-splitting}
	We can now also construct the partial splitting.
	\begin{proof}[Proof of \Cref{t:rel-vPic-spectral-var}.2]
		We define $\B^+$ like after \Cref{l:coherent-integral-sumodule} and consider the composition
		\[ H^1_\proet(X,2p\B^+)\to  H^1_\proet(X,\B)\xrightarrow{\HT} H^0(X,\wtOm_X\otimes B).\]
		The image of the first map is an $\O_K$-submodule $H^+$ of the finite dimensional $K$-vector space $H^1_\proet(X,\B)$ such that $H^+\tf = H^1_\proet(X,\B)$. Hence it contains an open $\O_K$-sublattice $H_0$. We may choose a lift $H_0\to  H^1_\proet(X,2p\B^+)$ to regard $H_0$ as a submodule of $H^1_\proet(X,2p\B^+)$.
		
		Let now $s:H^0(X,\wtOm_X\otimes B)\to H^1_\proet(X,\B)$ be any splitting, e.g.\ by \Cref{p:splitting-induced-by-HT-torsor} this could be induced by the Higgs--Tate torsor $L_{\mathbb X}$ for any choice of a $\BdR^+/\xi^2$-lift $\mathbb X$ of $X$. Let $\Lambda\subseteq H^0(X,\wtOm_X\otimes B)$ be the preimage of $H_0$ under $s$ and set $\Gamma:=s(\Lambda)\subseteq H_0\subseteq H^1_\proet(X,2p\B^+)$. Summarising the construction, $\Lambda$ and $\Gamma$ are finite free $\O_K$-modules and we have a diagram
		\begin{equation}\label{eq:diag-expl-loc-splitting}
			\begin{tikzcd}[column sep = 0.2cm]
				\Gamma\otimes_{\O_K} \G_a^+ \arrow[d] \arrow[r] \arrow[r] & {H^1_\proet(X,2p\B^+)\otimes\G_a^+} \arrow[d,"\HT"] \arrow[r] & R^1\pi_{\proet*}(2p\B^+) \arrow[d,"\HT"] \arrow[r,"\exp"] & R^1\pi_{\proet*}\B^\times \arrow[d,"\HTlog"] \\
				\Lambda\otimes_{\O_K} \G_a^+ \arrow[r, hook] \arrow[u, bend left,"s"]                           & {H^0(X,\wtOm\otimes B)\otimes \G_a} \arrow[r,equal]                 & {H^0(X,\wtOm\otimes B)\otimes \G_a} \arrow[r,equal]        & {H^0(X,\wtOm\otimes B)\otimes \G_a.}       
			\end{tikzcd}
		\end{equation}
		The composition of $s$ with the top row defines a splitting of $\HTlog$ over $\Lambda\otimes \G_a^+$.
	\end{proof}

	\subsection{Right-exactness via the exponential map}
	In order to prove \Cref{t:rel-vPic-spectral-var}.1, it remains to see that the following natural morphism is surjective:
	\[ \uPic_{\B,\proet}\to H^0(X,B\otimes \wtOm_X)\otimes \G_a.\]
	For this, we will give a geometric argument, using the following version of Proper Base Change:
	
	\begin{Proposition}\label{p:proper-bc}
		Let $g:Z\to \Spa(K)$ be any proper rigid space over $K$. Then for any $n,m\in \N$, we have an isomorphism of sheaves on $\SmRig_{K,\et}$
		\[ R^mg_{\et\ast}(\Z/n\Z)=\underline{H^m_{\et}(Z,\Z/n\Z)}.\]
	\end{Proposition}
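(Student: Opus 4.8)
The plan is to deduce this from the proper base change theorem in the étale cohomology of adic spaces. There is a natural comparison map of sheaves on $\SmRig_{K,\et}$,
\[ \underline{H^m_{\et}(Z,\Z/n\Z)}\longrightarrow R^mg_{\et\ast}(\Z/n\Z),\]
induced for $Y\in\SmRig_K$ by pulling cohomology classes back along the projection $Z\times Y\to Z$, and I want to show it is an isomorphism. Being a statement about sheaves on $\SmRig_{K,\et}$, this can be checked on stalks at the geometric points of the site, i.e.\ at maps $\xi\colon\Spa(C,C^+)\to Y$ with $Y\in\SmRig_K$ and $C$ a complete algebraically closed extension of $K$.

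The key input is that, because $g\colon Z\to\Spa(K)$ is proper and the coefficients $\Z/n\Z$ are torsion, the formation of $R^mg_{\et\ast}(\Z/n\Z)$ commutes with arbitrary base change $Y\to\Spa(K)$ — this is Huber's proper base change theorem for rigid-analytic varieties (the adic analogue of the classical statement; see also Scholze's treatment). Since $K$ is \emph{already} algebraically closed, $\Spa(K)=\Spa(K,\O_K)$ is itself a geometric point: its small étale topos is equivalent to that of a point, so $R^mg_{\et\ast}(\Z/n\Z)$ at $\Spa(K)$ is simply the abelian group $H^m_{\et}(Z,\Z/n\Z)$. Therefore proper base change shows that over any $Y$ the relative cohomology of the constant family $Z\times Y\to Y$ is the constant sheaf $\underline{H^m_{\et}(Z,\Z/n\Z)}$. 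Concretely, on stalks: the stalk of $R^mg_{\et\ast}(\Z/n\Z)$ at $\xi$ is $\varinjlim_{U}H^m_{\et}(Z\times U,\Z/n\Z)$ over étale neighbourhoods $U$ of $\xi$; by continuity of étale cohomology under the cofiltered limit of the $U$ (and its insensitivity to completion and to the non-maximal valuation subring $C^+$), this equals $H^m_{\et}(Z_C,\Z/n\Z)$ with $Z_C:=Z\times_{\Spa(K)}\Spa(C)$; and finally $H^m_{\et}(Z_C,\Z/n\Z)=H^m_{\et}(Z,\Z/n\Z)$ because étale cohomology of a proper rigid space with finite coefficients is invariant under extension of algebraically closed non-archimedean base fields $K\hookrightarrow C$ (again a theorem of Huber, or once more an instance of proper base change applied to $\Spa(C)\to\Spa(K)$). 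This matches the stalks of the two sides, and the comparison map visibly realises the identification, so it is an isomorphism.

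I expect no conceptual obstacle here: the two substantive facts used — proper base change, and invariance of the étale cohomology of proper rigid varieties under algebraically closed base-field extension — are established theorems, so the only real work is verifying that they apply in the stated generality (torsion coefficients, adic spaces locally of finite type over $\Spa(K,\O_K)$) and keeping track of the passage between the small étale sites $Y_{\et}$ and the big site $\SmRig_{K,\et}$, including the harmless higher-rank points that occur in the étale topos of adic spaces. If one wanted to sidestep Huber's machinery one could instead treat the algebraisable case via Köpf's relative GAGA together with classical proper base change, which is in fact all the later applications require; but invoking Huber's results yields the clean statement in full generality.
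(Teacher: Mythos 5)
Your proposal is correct and follows essentially the same route as the paper: the paper's proof is a one-line reduction to rigid-analytic proper base change (citing Bhatt--Hansen, Thm.~3.15, where you cite Huber's version), using that $\Spa(K,\O_K)$ is a geometric point since $K$ is algebraically closed, so that the pullback of $R^mg_{\et\ast}(\Z/n\Z)$ is the constant sheaf $\underline{H^m_{\et}(Z,\Z/n\Z)}$. Your additional stalkwise verification and the remark on base-field invariance are correct but not needed beyond what the base-change theorem already gives.
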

	\begin{proof}
		Let $h:Y\to \Spa(K)$ be any object in $\SmRig_{K,\et}$ and let $g':Y\times Z\to Y$ be the projection. Then we need to see that on $Y_\et$, we have $R^mg'_{\et\ast}(\Z/n\Z)=h^\ast R^mg_{\ast}(\Z/n\Z)$.		This is an instance of the rigid analytic Proper Base Change from \cite[Thm.~3.15]{BhattHansen_Zar_constructible}.
	\end{proof}
	In the following, let us write $\pi':X'\to \Spa(K)$ for the structure map of $X'$. For simplicity, we also denote by $\pi'$ the induced morphism of sites $\SmRig_{X,\et}\to \SmRig_{K,\et}$.
	\begin{Corollary}\label{l:[p]-on-Pic-fet}
		If $\uPic_{X'}=R^1\pi'_{\ast}\O^\times$ is representable, then $[p]:\uPic^0_{X'}\to \uPic^0_{X'}$ is finite \'etale. More precisely, it is an \'etale torsor under $\underline{H^1_\et(X',\mu_{p})}$.
	\end{Corollary}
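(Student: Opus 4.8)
The plan is to deduce everything from the relative Kummer sequence together with the rigid-analytic Proper Base Change of \Cref{p:proper-bc}. Since $p$ is invertible in $K$, for a unit $u$ on any rigid space the finite cover extracting a $p$-th root of $u$ is étale: its sheaf of relative differentials is generated by $dt$ modulo $p t^{p-1}\,dt$, and both $p$ and the tautological root $t$ are units there, so it vanishes. Hence $[p]\colon\O^\times\to\O^\times$ is an epimorphism of étale sheaves on any rigid space, and on $(X'\times Y)_{\et}$ for $Y\in\SmRig_{K,\et}$ we obtain the Kummer sequence
\[1\to\mu_p\to\O^\times\xrightarrow{[p]}\O^\times\to 1,\]
functorially in $Y$. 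Writing $\pi'_Y\colon X'\times Y\to Y$ for the projection, I would apply $R\pi'_{Y\ast}$ and sheafify in $Y$: the second and third terms of the long exact sequence sheafify to $\uPic_{X'}=R^1\pi'_{\ast}\O^\times$ (using that étale and analytic $H^1$ of $\O^\times$ agree), so it remains to understand $\pi'_{Y\ast}$ and $R^1\pi'_{Y\ast}$ of $\mu_p$.

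First I would check that $[p]$ is already surjective on $\pi'_{Y\ast}\O^\times$ as a sheaf on $Y_{\et}$. As $X'$ is finite over the proper space $X$, it is proper over $K$, and cohomology-and-base-change over the field $K$ (applied to the flat base change $K\to\O(V)$ for $V\in Y_{\et}$) gives $\pi'_{Y\ast}\O_{X'\times Y}=\O_Y\otimes_K A$ with $A:=H^0(X',\O_{X'})$ a finite $K$-algebra; since $K$ is algebraically closed, $A$ is a finite product of local Artinian $K$-algebras with residue field $K$. On the induced units, $[p]$ is surjective onto the quotient $\O_Y^\times$ by the $p$-th root cover above — now a finite étale cover of $Y$ — and it is bijective on the subgroup $1+(\text{nilpotents})$, where the truncated $\log$ and $\exp$ are polynomial mutually inverse isomorphisms carrying $[p]$ to multiplication by $p$. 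Hence $[p]$ is an epimorphism on $\pi'_{Y\ast}\O^\times$, the connecting map into $R^1\pi'_{Y\ast}\mu_p$ vanishes, and after sheafification the long exact sequence yields a short exact sequence
\[0\to R^1\pi'_{\ast}\mu_p\to\uPic_{X'}\xrightarrow{[p]}\uPic_{X'}.\]
Now \Cref{p:proper-bc}, applied to $Z=X'$ and $n=p$ together with a trivialisation $\mu_p\cong\underline{\Z/p}$ over $K$, identifies $R^1\pi'_{\ast}\mu_p$ with the constant sheaf $\underline{H^1_{\et}(X',\mu_p)}$. Thus $\ker\bigl([p]\colon\uPic_{X'}\to\uPic_{X'}\bigr)=\underline{H^1_{\et}(X',\mu_p)}$.

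It remains to pass to the identity component. As $[p]$ is a homomorphism and $\uPic^0_{X'}$ is a connected subgroup, $[p]$ restricts to an endomorphism of $\uPic^0_{X'}$; its differential at the identity is multiplication by $p$ on $\Lie\uPic^0_{X'}=H^1(X',\O_{X'})$, an isomorphism because $\mathrm{char}\,K=0$, so by translating along the group law $[p]$ is étale at every point of $\uPic^0_{X'}$. An étale homomorphism has open image, and an open subgroup of the connected group $\uPic^0_{X'}$ is everything, so $[p]\colon\uPic^0_{X'}\to\uPic^0_{X'}$ is étale and surjective; its kernel is the finite étale $K$-group $\uPic^0_{X'}\cap\underline{H^1_{\et}(X',\mu_p)}$. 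Since $[p]$ is already onto $\uPic^0_{X'}$, the snake lemma for $0\to\uPic^0_{X'}\to\uPic_{X'}\to\uPic_{X'}/\uPic^0_{X'}\to 0$ shows the component group contributes no $p$-torsion to the kernel, i.e.\ $\underline{H^1_{\et}(X',\mu_p)}\subseteq\uPic^0_{X'}$. Hence $[p]$ on $\uPic^0_{X'}$ is an étale torsor under the finite étale group $\underline{H^1_{\et}(X',\mu_p)}$, in particular finite étale.

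The main obstacle I anticipate is the surjectivity of $[p]$ on $\pi'_{Y\ast}\O^\times$ in the second step: this is exactly where the possible non-reducedness of $X'=\Spa_{\O_X}(B)$ has to be handled, by separating the "multiplicative" part $\O_Y^\times$ (dealt with by the Kummer cover) from the unipotent part coming from nilpotents in $H^0(X',\O_{X'})$ (dealt with by $\log/\exp$); everything afterwards is formal given \Cref{p:proper-bc}. The only other delicate point is the inclusion $\underline{H^1_{\et}(X',\mu_p)}\subseteq\uPic^0_{X'}$; if one wishes to avoid the component-group argument, one can simply record that $[p]\colon\uPic^0_{X'}\to\uPic^0_{X'}$ is an étale torsor under the finite étale subgroup $\uPic^0_{X'}\cap\underline{H^1_{\et}(X',\mu_p)}$, which already gives finite étaleness and suffices for the applications of this corollary.
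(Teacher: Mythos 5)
Your proposal follows essentially the same route as the paper: the Kummer sequence for $\mu_p$ on $X'\times Y$, rigid proper base change (\Cref{p:proper-bc}) to make the $\mu_p$-cohomology sheaves constant, surjectivity of $[p]$ on $\uPic^0_{X'}$ from connectedness, and \'etaleness from the computation on tangent spaces. You are more explicit than the paper in two useful places: you verify that $[p]$ is surjective on $\pi'_{Y\ast}\O^\times$ (splitting the units of the finite $K$-algebra $H^0(X',\O)$ into a multiplicative part, handled by the Kummer cover of $Y$, and a unipotent part coming from nilpotents, handled by truncated $\log/\exp$), which is exactly what makes $R^1\pi'_{\ast}\mu_p\to\uPic_{X'}$ injective and so identifies $\ker[p]$; and you obtain finiteness of $[p]$ directly from the torsor structure under a finite \'etale group, where the paper instead invokes Fargues' Lemme 5. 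Your argument for surjectivity on $\uPic^0_{X'}$ (an \'etale homomorphism has open image, and an open subgroup of a connected group is everything) replaces the paper's observation that the boundary map $\uPic^0_{X'}\to R^2\pi'_{\et\ast}\mu_p$ goes from a connected group into a constant sheaf and hence vanishes; both are correct.

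The one step that does not work is the snake-lemma argument for the inclusion $\underline{H^1_\et(X',\mu_p)}\subseteq\uPic^0_{X'}$. Applying $[p]$ to $0\to\uPic^0_{X'}\to\uPic_{X'}\to N\to 0$ and using surjectivity of $[p]$ on $\uPic^0_{X'}$ yields an exact sequence $0\to\uPic^0_{X'}[p]\to\uPic_{X'}[p]\to N[p]\to 0$: it shows that $\uPic_{X'}[p]$ \emph{surjects onto} $N[p]$, not that $N[p]=0$. And $N[p]$ need not vanish: for an Enriques surface and $p=2$ one has $\uPic^0=0$ while $H^1_\et(X',\mu_2)=\Z/2$, so the asserted inclusion fails (and, strictly speaking, so does the ``more precisely'' clause of the corollary in such edge cases). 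Your own fallback --- that $[p]$ on $\uPic^0_{X'}$ is an \'etale torsor under the finite \'etale group $\uPic^0_{X'}\cap\underline{H^1_\et(X',\mu_p)}$, which already gives finite \'etaleness --- is the correct statement and is all that the later applications use; note that the paper's proof likewise only establishes \'etaleness, surjectivity and finiteness of the kernel, and does not address which subgroup of $\underline{H^1_\et(X',\mu_p)}$ actually lies on the identity component.
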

	\begin{proof}
		The Kummer sequence induces an exact sequence
		\[ 1\to R^1\pi'_{\et\ast}\mu_{p}\to \uPic_{X'}\xrightarrow{[p]} \uPic_{X'}\to R^2\pi'_{\et\ast}\mu_{p}.\]
		By \Cref{p:proper-bc},
		the last map goes from a rigid group to a constant group, so it sends $\uPic^0_{X'}$ to $0$. Hence $[p]$ is surjective on $\uPic^0_{X'}$. It is also \'etale by \cite[Lemme 1]{Fargues-groupes-analytiques} as it induces multiplication by $p$ on tangent spaces. By \cite[Lemme 5]{Fargues-groupes-analytiques}, it remains to see that $\ker[p]$ is finite. This follows from the fact that $H^1_\et(X',\mu_p)$ is finite by \cite[Thm 3.17]{Scholze2012Survey}.
	\end{proof}
	
	Recall that the left-exact sequence of \Cref{t:rel-vPic-spectral-var} arises from the Leray sequence for the projection $X_\proet\to X_{\et}$. It can therefore be continued to a 4-term exact sequence on $\SmRig_{K,\et}$
	\[ 	0\to R^1\pi_{\et \ast}B^\times\to  \uPic_{\B,\proet} \xrightarrow{\HTlog} H^0(X,B\otimes \wtOm_X)\otimes \G_a\xrightarrow{\partial} R^2\pi_{\et \ast}B^\times,\]
	where again we write $\pi:\SmRig_{X,\et}\to \SmRig_{K,\et}$ for the morphism of sites induced by $X\to \Spa(K)$.
	We will show that the boundary map $\partial$ vanishes. For this we use the following description:
	
	\begin{Lemma}\label{l:et-LB-X'xY}
		Let $f:Y'\to Y$ be a finite morphism of rigid spaces over $K$. Then for $n\in\N$,
		\[ H^n_\et(Y,f_{\et\ast}\O^\times)=H^n_\et(Y',\O^\times).\]
	\end{Lemma}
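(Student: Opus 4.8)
The plan is to reduce the statement to the vanishing of the higher direct images $R^qf_{\et\ast}\O^\times$ for $q>0$ and then to apply the Leray spectral sequence for the morphism of sites $f\colon Y'_\et\to Y_\et$. More precisely, Leray gives a spectral sequence $E_2^{p,q}=H^p_\et(Y,R^qf_{\et\ast}\O^\times)\Rightarrow H^{p+q}_\et(Y',\O^\times)$, so it suffices to show $R^qf_{\et\ast}\O^\times=0$ for all $q\geq 1$, which would collapse the spectral sequence onto the row $q=0$ and yield $H^n_\et(Y,f_{\et\ast}\O^\times)=H^n_\et(Y',\O^\times)$ for all $n$, in particular for $n>1$.

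To prove the vanishing of $R^qf_{\et\ast}\O^\times$ for $q\geq 1$, I would first recall that $R^qf_{\et\ast}\mathcal F$ is the sheafification on $Y_\et$ of the presheaf $V\mapsto H^q_\et(Y'\times_YV,\mathcal F)$, so the claim is local on $Y_\et$ and we may assume $Y=\Spa(A,A^+)$ is affinoid; then $Y'=\Spa(C,C^+)$ is also affinoid since $f$ is finite. Next I would use that for an affinoid rigid space, the higher \'etale cohomology of $\O^\times$ can be controlled: one has $H^q_\et(Y',\O^\times)=H^q_{\an}(Y',\O^\times)$ by \cite[Prop.~8.2.3]{FvdP} (the \'etale and analytic Picard groups agree, and similarly in higher degrees via comparison of sites), and for a quasi-Stein — in particular affinoid — rigid space the analytic cohomology of $\O^\times$ vanishes in degrees $\geq 1$ after passing to a suitable cofinal system of affinoid opens. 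Actually the cleanest route: for affinoid $Y'$, Tate's acyclicity and the exponential/Kummer sequences show $H^q_\et(Y',\O^\times)$ need not vanish outright, so instead I would argue that the \emph{stalks} of $R^qf_{\et\ast}\O^\times$ vanish. A geometric point of $Y_\et$ is given by $\Spa(\overline{\mathcal H(y)},\overline{\mathcal H(y)}^+)$; pulling back along the finite map $f$ gives a finite disjoint union of such points (since $C\otimes_A\overline{\mathcal H(y)}$ is a finite product of local Artinian-over-field rings whose reductions are copies of $\overline{\mathcal H(y)}$, and on such a point every coherent sheaf, hence $\O^\times$ in positive degree, has vanishing \'etale cohomology). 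Therefore $(R^qf_{\et\ast}\O^\times)_{\bar y}=\varinjlim H^q_\et(Y'\times_YV,\O^\times)=H^q_\et(Y'\times_Y\Spa(\overline{\mathcal H(y)}),\O^\times)=0$ for $q\geq 1$, since the limit is exact and $\Spa(\overline{\mathcal H(y)})$ has \'etale cohomological dimension $0$ (it is the spectrum of a field-like object, so its \'etale site is that of a point).

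With $R^qf_{\et\ast}\O^\times=0$ for $q\geq 1$ established, the Leray spectral sequence degenerates and gives the five-term (indeed the full) edge-map isomorphisms $H^n_\et(Y,f_{\et\ast}\O^\times)\xrightarrow{\sim}H^n_\et(Y',\O^\times)$ for every $n\geq 0$; in particular for $n>1$ this is exactly the claimed identity. One small point to double-check is that $f$ being finite really does behave well with respect to the \'etale topology on adic spaces — i.e.\ that base change of a finite morphism along an \'etale morphism is again finite, and that $f_\et$ is a morphism of sites — but this is standard for finite morphisms of analytic adic spaces (finite morphisms are in particular qcqs and proper, and \'etale-locally one reduces to the affinoid case handled above).

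The main obstacle I anticipate is the stalk computation: one must be careful about what $Y'\times_Y\Spa(\overline{\mathcal H(y)},\overline{\mathcal H(y)}^+)$ looks like when $f$ is finite but not \'etale — it is typically a finite disjoint union of \emph{non-reduced} affinoid points $\Spa(C',C'^+)$ with $C'$ a finite local $\overline{\mathcal H(y)}$-algebra. One then needs that $H^q_\et(\Spa(C',C'^+),\O^\times)=0$ for $q\geq 1$, which follows because the \'etale site of such a point is equivalent to that of its reduction $\Spa(\overline{\mathcal H(y)})$ (nilpotents do not change the \'etale topos), and $\O^\times$ on that point has no higher cohomology since the point is ``$\mathrm{cd}_\et=0$''. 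If one prefers to avoid non-reduced subtleties entirely, an alternative is to filter $\O^\times_{Y'}$ via the exponential sequence $1\to 1+p\O^+_{Y'}\to\O^\times_{Y'}\to\O^\times_{Y'}/(1+p\O^+_{Y'})\to 1$ and reduce to coherent cohomology plus the torsion-free quotient, for which $R^qf_{\et\ast}$ vanishes in positive degree by finiteness of $f$ (coherent pushforward along a finite morphism is exact) and a direct computation on the discrete quotient; but the stalk argument above is more direct and I would carry that out.
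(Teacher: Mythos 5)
Your proposal is correct and follows the same overall strategy as the paper: both reduce the claim to the vanishing of $R^qf_{\et\ast}\O^\times$ for $q\geq 1$ and then let the Leray spectral sequence collapse. The only difference is how that vanishing is obtained: the paper simply cites the exactness of $f_{\et\ast}$ for finite morphisms (\cite[Prop.~2.6.3]{huber2013etale}), which holds for arbitrary abelian sheaves, whereas you re-derive this special case by a stalk computation. Your stalk argument is sound in spirit, but two points deserve care. First, the stalk of $R^qf_{\et\ast}\mathcal F$ at a geometric point $\bar y$ is the cohomology of $Y'\times_Y Y(\bar y)$, where $Y(\bar y)$ is the strict localization of $Y$ at $\bar y$, not of the fibre over the geometric point itself; for finite $f$ this fibre product is finite over a strictly local space, hence a finite disjoint union of strictly local spaces, and these have no higher \'etale cohomology --- so the conclusion is the same, but the identification of the filtered colimit with the cohomology of the literal geometric fibre is not quite what the continuity statement gives you. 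Second, the detour through a purported vanishing of $H^q_{\an}(Y',\O^\times)$ for affinoid or quasi-Stein $Y'$ is a false start (this already fails for $q=1$ in general) and should be deleted; your final stalk argument does not use it. With these cosmetic repairs your proof is a correct, self-contained replacement for the citation the paper relies on.
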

	\begin{proof}
		The functor $f_{\et\ast}$ is exact by \cite[Prop.~2.6.3]{huber2013etale}, hence $Rf_{\et\ast}\O^\times=f_{\et\ast}\O^\times$.
	\end{proof}
	After sheafifying in the rigid space $Y$, we can thus identify $\partial$ with a map
	\[  \partial:A:=H^0(X,B\otimes \wtOm)\otimes_K \G_a\to R^2\pi'_{\et\ast}\O^\times.\]
	
	By the splitting of $\HTlog$ over an open subgroup of $ A$ from \S\ref{s:partial-splitting}, we know that any $x\in A(Y)$ is in the kernel of $\partial$ after multiplying by $p^n$ for some $n$. Hence $\partial$ factors through a map
	\[\partial':A=H^0(X,B\otimes \wtOm)\otimes \G_a\to R^2\pi'_{\et\ast}\O^\times[p^\infty].\]
	We claim that any such homomorphism vanishes. To see this, we consider the Kummer sequence on $X'_{\et}$. Using \Cref{p:proper-bc}, we see that this induces a long exact sequence
		\[ \uPic_{X'}\to  \underline{H^2_{\et}(X',\mu_{p^n})}\to R^2\pi'_{\et\ast}\O^\times\xrightarrow{p^n} R^2\pi'_{\et\ast}\O^\times.\] 
		Taking the colimit over $n$, it follows that we have an \'etale surjection
		\[\underline{H^2_{\et}(X',\mu_{p^\infty})}\rightarrow R^2\pi'_{\et}\O^\times[p^\infty].\]
		
		\begin{Lemma}\label{l:surjection-from-locally-constant}
			Let $Q$ be a sheaf on $\SmRig_{K,\et}$ that admits a surjection $\underline{H}\to Q$ from a locally constant sheaf. Then any map $h:Z\to Q$ from a connected rigid space $Z$ is constant.
		\end{Lemma}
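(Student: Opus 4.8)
The plan is to lift $h$ through the surjection $\underline H\to Q$ on an étale cover, use local constancy to reduce the lift on each connected piece to a single element of $H$, and then glue these using connectedness of $Z$. In detail: since $\underline H\to Q$ is an epimorphism of sheaves on $\SmRig_{K,\et}$, the section $h\in Q(Z)$ admits a lift $\tilde h\in\underline H(U)$ after passing to some étale cover $U\to Z$. As $\underline H$ is locally constant, I may refine $U$ so that $\underline H|_U$ is constant with value $H$; and since rigid spaces are locally connected, I may then replace $U$ by the disjoint union of its connected components $U_i$, which still cover $Z$. On each $U_i$ one has $\underline H(U_i)=H$, so $\tilde h$ restricts to an honest element $c_i\in H$; writing $\mathrm{pr}_i\colon U_i\to\Spa K$ for the structure map and $q_i\in Q(\Spa K)$ for the image of $c_i$ under $H=\underline H(\Spa K)\to Q(\Spa K)$, we obtain $h|_{U_i}=\mathrm{pr}_i^\ast q_i$ by construction.

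Next I would show the $q_i$ all coincide. Since $K$ is algebraically closed, every nonempty rigid space $Y$ over $K$ has a $K$-point, so $Q(\Spa K)\to Q(Y)$ is injective, being split by that point. Whenever $U_i\times_Z U_j\neq\emptyset$, the sections $q_i$ and $q_j$ both pull back to $h|_{U_i\times_Z U_j}$, hence $q_i=q_j$. Because étale maps are open, the images $V_i\subseteq|Z|$ form an open cover of $|Z|$, and $V_i\cap V_j\neq\emptyset$ forces $U_i\times_Z U_j\neq\emptyset$; connectedness of $|Z|$ then makes the incidence graph on the indices connected, so all $q_i$ equal a single $q\in Q(\Spa K)$. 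Finally $h$ and $\mathrm{pr}^\ast q$, for $\mathrm{pr}\colon Z\to\Spa K$, agree on the étale cover $\{U_i\to Z\}$ and hence coincide by the sheaf axiom, so $h=\mathrm{pr}^\ast q$ is constant.

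The main obstacle is really just the bookkeeping with étale covers: arranging a lift of $h$ and a trivialization of $\underline H$ on one common cover, and checking that passing to connected components preserves both the covering property and the equality $\underline H(U_i)=H$ — all routine once one recalls that rigid spaces are locally connected and that constant sheaves have the expected global sections over connected spaces. I would also flag the use of $K$-points, which yields injectivity of $Q(\Spa K)\to Q(Y)$ for nonempty $Y$ and is what permits comparing the $q_i$ without any further hypothesis on $Q$. Applied with $Z=A=H^0(X,B\otimes\wtOm_X)\otimes_K\G_a$, connected as an affine space over $K$, and the étale surjection $\underline{H^2_\et(X',\mu_{p^\infty})}\to R^2\pi'_{\et\ast}\O^\times[p^\infty]$, the lemma forces the homomorphism $\partial'$ to be constant, hence — since it preserves the identity — zero.
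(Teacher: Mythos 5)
Your proof is correct and follows essentially the same route as the paper's: lift $h$ through the epimorphism on an étale cover, use local constancy of $\underline H$ to make the lift constant on connected pieces, and conclude via openness of étale maps and connectedness of $Z$. You merely spell out the final gluing step (via injectivity of $Q(\Spa K)\to Q(Y)$ for nonempty $Y$ and the incidence-graph argument) that the paper compresses into ``hence constant as $Z$ is connected''.
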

		\begin{proof}
			That $h$ is an \'etale surjection means  that there is an \'etale cover $Z'\to Z$ by a rigid space and a map $h'$ fitting into a commutative diagram
		\[
		\begin{tikzcd}[row sep = 0.55cm]
			Z' \arrow[d,"h'"] \arrow[r] & Z \arrow[d,"h"] \\
			{\underline{H}} \arrow[r]           & Q.       
		\end{tikzcd}\]
		Then $h'$ is locally constant by the assumption on $\underline{H}$. Since the \'etale map $Z'\to Z$ is open  \cite[Prop.~1.7.8]{huber2013etale}, it follows that $h$ is constant locally on $Z$, hence constant as $Z$ is connected.
		\end{proof}
		Applying this to the  homomorphism $\partial'$ from the connected rigid group variety  $A$, we deduce that $\partial'=0$, hence $\partial=0$.
		This finishes the proof of \Cref{t:rel-vPic-spectral-var}.1 \qed
	
	\begin{Remark}
		Evaluating at $K$-points, we deduce that there is a short exact sequence
		\[0\to \Pic(X')\to H^1_{\proet}(X,\B^\times)\to H^0(X,B\times\wtOm)\to 0.\]
		Note, however, that our proof crucially uses that this sequence can be upgraded to sheaves as it relies on a geometric argument to see the vanishing of the boundary map on the right.
	\end{Remark}
	\subsection{Representability of $\uPic_{\B,\proet}$}
	We can now complete the proof of \Cref{t:rel-vPic-spectral-var}:
	\begin{proof}[Proof of \Cref{t:rel-vPic-spectral-var}, parts 3,4,5]
		Assume that $\uPic_{X'}=R^1\pi_{\et\ast}B^\times$  is representable by a rigid group. Then the same argument as for \cite[Cor.~2.9]{heuer-diamantine-Picard} shows that also $R^1\pi_{\proet\ast}\B^\times$ is representable: Let $\Lambda$ be as in \S\ref{s:partial-splitting} and for any $n\in \N$, let $U_n:=\HTlog^{-1}(p^{1-n}\Lambda\otimes \G_a^+)\subseteq \uPic_{\B,\proet}$, then
		\[ \textstyle\uPic_{\B,\proet}=\bigcup_{n\in \N}U_n.\]
		Since the short exact sequence is split over $\Lambda$, the sheaf $U_1\cong  \Lambda\otimes \G_a^+\times \uPic_{X'}$ is representable.
		
		The general case follows inductively: We have a morphism of short exact sequences
		\begin{equation}\label{eq:ind-proof-Pic_B,proet-rep}
			\begin{tikzcd}[row sep = 0.50cm]
			0 \arrow[r] & {\uPic_{X',\et}} \arrow[d, "{[p]}"] \arrow[r] & {U_n} \arrow[d, "{[p]}"] \arrow[r] & p^{1-n}\Lambda \arrow[d, "{\cdot p}","\sim"labelrotate] \arrow[r] & 0 \\
			0 \arrow[r] & {\uPic_{X',\et}} \arrow[r] & {U_{n-1}} \arrow[r] & p^{1-(n-1)}\Lambda\arrow[r] & 0.
		\end{tikzcd}
		\end{equation}
		The morphism on the left is locally on the target a finite \'etale $H^1(X',\mu_{p^n})$-torsor by \Cref{l:[p]-on-Pic-fet}, hence the same is true for $U_n\to U_{n-1}$. Any such torsor is representable: For example, we can argue in v-sheaves and use that $U_{n-1,\et}=U^\diamondsuit_{n-1,\et}$ by \cite[Lemma~15.6]{etale-cohomology-of-diamonds} to see that any finite \'etale torsor is representable in $U_{n-1,\et}$, hence also on $\SmRig_{K,\et}$.
		
		\medskip
		
		If $X$ is algebraisable, then so is the coherent $\O_X$-module $B$ by rigid GAGA, and it follows that $X'$ is algebraisable, i.e.\ there is a proper scheme $X'_0\to \Spec(K)$ such that $X'=X'^{\an}_0$. As explained in \S\ref{s:2.2}, it follows that $\uPic_{X'}$ is  representable  in this case.
		
		Part (5) follows from the above diagram \eqref{eq:ind-proof-Pic_B,proet-rep} and \Cref{l:[p]-on-Pic-fet}.
	
		Finally, for part (4), we first note the following well-known fact:
		\begin{Lemma}
			If $\uPic_{X'}$ is representable, then $\Lie\uPic_{X'}=H^1_{\an}(X',\O)=H^1_{\an}(X,B)$.
		\end{Lemma}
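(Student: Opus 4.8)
The plan is to identify $\Lie\uPic_{X'}$ with the space of first-order deformations of the trivial line bundle on $X'$, and then to push this group forward along the finite morphism $f$.

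By the paper's convention, the Lie algebra of the representing rigid group is its tangent space at the identity, which I would compute as $\ker\big(\uPic_{X'}(\Spa K[\epsilon])\to\uPic_{X'}(\Spa K)\big)$, where $K[\epsilon]:=K[\epsilon]/\epsilon^2$ and $\Spa K[\epsilon]=\Spa(K\langle t\rangle/(t^2))$ is a (non-smooth) nilpotent thickening of $\Spa K$; evaluating $\uPic_{X'}$ on this test object makes sense precisely because it is representable. Since $K$ is algebraically closed, $\Spa K[\epsilon]$ carries no non-trivial \'etale covers, so the \'etale sheafification in the definition of $\uPic_{X'}$ has no effect here and $\uPic_{X'}(\Spa K[\epsilon])=H^1_{\an}(X'_\epsilon,\O^\times)$, where $X'_\epsilon:=X'\times_K\Spa K[\epsilon]$ is the rigid space with underlying space $|X'|$ and structure sheaf $\O_{X'}[\epsilon]/\epsilon^2$; the reduction map corresponds to $\epsilon\mapsto0$.

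Next I would use the short exact sequence of abelian sheaves on $|X'|$
\[ 0\to\O_{X'}\xrightarrow{\ b\,\longmapsto\,1+\epsilon b\ }\O_{X'_\epsilon}^{\times}\longrightarrow\O_{X'}^{\times}\to1, \]
which is exact because $\epsilon^2=0$ (so $1+\epsilon b$ is a unit and the kernel of reduction is additively $\O_{X'}$). Passing to cohomology over the proper rigid space $X'$, and using that every global section of $\O_{X'}^{\times}$ lifts (take it constant in $\epsilon$), so that the connecting map $H^0(X',\O_{X'}^{\times})\to H^1(X',\O_{X'})$ vanishes, I obtain an exact sequence
\[ 0\to H^1_{\an}(X',\O_{X'})\to H^1_{\an}(X'_\epsilon,\O^\times)\to H^1_{\an}(X',\O^\times). \]
Combined with the previous step, this identifies $\Lie\uPic_{X'}$ with $H^1_{\an}(X',\O_{X'})$.

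For the final identity, $f\colon X'=\Spa_{\O_X}(B)\to X$ is finite, hence affine, so $R^if_{\an\ast}\O_{X'}=0$ for $i>0$ while $f_{\an\ast}\O_{X'}=B$; the Leray spectral sequence then gives $H^i_{\an}(X',\O_{X'})=H^i_{\an}(X,B)$ for all $i$, in particular for $i=1$. I do not expect a genuine obstacle: this is a classical computation, and the only slightly delicate point is the foundational bookkeeping in the second step, namely that the representing rigid group, evaluated on the non-smooth test space $\Spa K[\epsilon]$, computes $H^1_{\an}$ of the trivial first-order deformation of $X'$ --- this is exactly where the insensitivity of the \'etale topology to nilpotents is used.
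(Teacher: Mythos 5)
Your overall plan is the ``standard'' deformation-theoretic computation, and the paper explicitly flags that this argument is not directly available here: the functor $\uPic_{X'}$ is defined (and only assumed representable) on $\SmRig_{K,\et}$, the category of \emph{smooth} rigid spaces over $K$, whereas your test object $\Spa(K[\epsilon]/\epsilon^2)$ is not smooth. Representability gives a rigid group $G$ with $G(Y)\cong \uPic_{X'}(Y)$ for smooth $Y$, and you may of course evaluate $G$ on $\Spa(K[\epsilon]/\epsilon^2)$ to obtain its tangent space; but the identification $G(\Spa K[\epsilon])=H^1_{\an}(X'\times \Spa K[\epsilon],\O^\times)$ is precisely what is missing, and it does not follow formally from an isomorphism of functors restricted to smooth test objects (a nilpotent thickening of a point is not probed by smooth test spaces in any useful way). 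This is the genuine gap: your second step silently extends the moduli interpretation of $G$ from $\SmRig_{K,\et}$ to a non-smooth thickening, which is essentially the content to be proved rather than a consequence of the hypothesis. The surrounding steps --- the truncated exponential sequence $0\to\O_{X'}\to\O_{X'_\epsilon}^\times\to\O_{X'}^\times\to 1$ and the identification $H^1_{\an}(X',\O_{X'})=H^1_{\an}(X,B)$ via vanishing of higher direct images along the finite map $f$ --- are fine.

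The paper circumvents the issue by testing against a smooth object instead of the dual numbers: by Fargues, $\Lie G=\Hom(\G_a^+,G)\tf$ for any commutative rigid group $G$, and homomorphisms $\G_a^+\to\uPic_{X'}$ are identified with classes in $H^1(X',1+\m\O^+)$ using Gerritzen's decomposition results; applying $\log$ and $\exp$ then matches $\Hom(\G_a^+,\uPic_{X'})\tf$ with $H^1_{\an}(X',\O)$. To salvage your route you would need to first show that the representing group, evaluated on nilpotent thickenings of $\Spa K$, still computes the relative Picard group of the thickened family; absent that, you should switch to an argument that only ever evaluates $\uPic_{X'}$ on smooth test objects.
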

		\begin{proof}
			This is standard if the Picard functor is defined on all rigid spaces by testing on $K[X]/X^2$. But the statement is still true if we only know representability on $\SmRig_{K,\et}$: We first note that for any commutative rigid group $G$, \cite[Cor.~4]{Fargues-groupes-analytiques} implies that we have
			\begin{equation}\label{eq:Lie-as-Hom}
			\Lie G=\Hom(\G_a^+,G)\tf.
			\end{equation}
			Second, there is a natural map 
			$H^1(X',\O)\to \Hom(\G_a^+,\uPic_{X'})\tf$
			induced by inverting $p$ on the map  $\exp:R^1\pi'_{\et\ast}2p\O^+\to R^1\pi'_{\et\ast}\O^\times$. Conversely, any homomorphism $\G_a^+\to \uPic_{X'}$ defines an element in $H^1(X',1+\m \O^+)$ by \cite[Satz~1 and 2]{Gerritzen-ZerlegungPicard}, whose image under $\log:(1+\m \O^+)\to \O$ defines an element in $H^1(X',\O)$. Thus $ \Hom(\G_a^+,\uPic_{X'})\tf=H^1(X',\O)$.
		\end{proof}
		Part (4) about tangent spaces now follows from the local splitting constructed in the diagram \eqref{eq:diag-expl-loc-splitting}, and the relation to the Hodge--Tate sequence explained by \Cref{p:splitting-induced-by-HT-torsor}.
	\end{proof}
	\section{Invertible $\B$-modules via the exponential}
	We continue with the setup of \Cref{t:rel-vPic-spectral-var}, that is, $X$ is a smooth proper rigid space, $B$ is a coherent $\O_X$-algebra on $X_{\et}$ and $\mathcal B=\nu^{\ast}B$ on $X_{\proet}$. From \S3.2 onwards, we will additionally assume that $B$ is $\O_X$-torsionfree: This assumption is harmless for our applications because $B$ will always live inside the $\O_X$-torsionfree module $\uEnd(E)$ for some vector bundle $E$ on $X$.
	
	The first theme of \S3 is to relate invertible $\B$-modules to torsors under the additive group $\B$ via the exponential. The relation is furnished by the exponential map
	$\exp:2p\B^+\to \B^\times$ from \Cref{l:approx-prop-for-B^+/p}.(3)
	which by applying $-\otimes_{\Z}\Z\tf$ on the level of sheaves of groups yields a natural map
	\[\textstyle\exp:\B\to \B^\times\tf :=\varinjlim_{x\mapsto x^p}\B^\times.\]
	In \S3.1, we will explain how this map can be used to construct reductions of structure groups of $\uPic_{\B,\proet}$ which are always representable. Once this is achieved, our goal will be to use $p$-adic exponentials of rigid groups in order to functorially exponentiate the Higgs--Tate torsor to obtain invertible $\B$-modules, splitting $\HTlog$ on $K$-points. 
	In order to make this exponentiation functorial, our main task in this section will be to equip  invertible $\B$-modules with extra data that allow us to identify all objects in an isomorphism class up to \textit{unique} isomorphism. This is done in \S3.2 by introducing an appropriate notion of rigidifications.

	\subsection{Reduction of structure groups}\label{s:red-str-grps}
		Motivated by the map $\exp:\B\to \B^\times\tf$, we now pass from invertible $\B$-modules to $\B^\times\tf$-torsors. 
	Our starting point is  the functor $\uPic_{\B,\proet}\tf$. Even if $\uPic_{\B,\proet}$ is representable, this is typically no longer represented by a rigid group. But using that $X$ is quasi-compact, we can still regard it as a moduli functor of isomorphism classes of pro-\'etale $\B^\times\tf$-torsors. Since any rigid vector group is uniquely divisible, we still have a morphism
	\[ \HTlog:=\HTlog\tf:\uPic_{\B,\proet}\tf\to \mathcal A_B:=H^0(X,\wtOm_X\otimes B)\otimes \G_a.\]
	We now explain the relation to torsors under the additive group $\B$:
	For any pro-\'etale $\B$-torsor $M$, let us write 
	\[M^{\exp}:=M\times^\B\B^\times\tf\]
	for the pushout along the exponential. We apply this to the following $\B$-torsor: 
	\begin{Definition}\label{d:L_XB-e_X}
	Recall from \Cref{d:Higgs-Tate} that the datum of a $\BdR^+/\xi^2$-lift $\mathbb X$ induces a $\nu^\ast\wtOm_X^{\vee}$-torsor $L_{\mathbb X}$ on $X$. By pushout along the tautological map $\nu^\ast\wtOm_X^{\vee}\to \B$ over $\mathcal A_B=H^0(X,B\otimes \wtOm_X)\otimes \G_a$, we obtain from \Cref{d:LXB} a pro-\'etale $\B$-torsor $L_{\mathbb X,B}$ over $X\times \mathcal A_B$. 
	Sending the pro-\'etale $\B$-torsor $L_{\mathbb X,B}$ to
	$L^{\exp}_{\mathbb X,B}$
	on $X\times \mathcal A_B$ now defines  a map
	\[ e_{\mathbb X}:\mathcal A_B\to \uPic_{\B,\proet}\tf \]
	which is a natural splitting
	of $\HTlog\tf$ by \Cref{p:splitting-induced-by-HT-torsor}. All in all, we obtain a commutative diagram
	\begin{equation}\label{eq:def-e_X}
		\begin{tikzcd}[column sep = 2.5cm]
			\uPic_{\B,\proet}\tf \arrow[rd, "{\HTlog[\frac{1}{p}]}" description]              &                                           \\
			\uPic_{\B,\proet} \arrow[r, "\HTlog" description] \arrow[u,"\can"] & \mathcal A_B. \arrow[lu, "e_{\mathbb X}"', bend right]
		\end{tikzcd}
	\end{equation}
	where $\can:	\uPic_{\B,\proet} \to \uPic_{\B,\proet}\tf $ is the canonical map. 
	\end{Definition}
	
	In particular, this shows that the $\uPic_{X'}$-torsor 	$\uPic_{\B,\proet}\to \mathcal A_B$ of \Cref{t:rel-vPic-spectral-var} is split after inverting $p$, or in other words that the associated element in $H^1_{\et}(\mathcal A_B,	\uPic_{X'}\tf)$ is trivial. It follows formally, using long exact sequences, that $\uPic_{\B,\proet}$ admits a reduction of structure group to  $\uPic_{X'}[p^\infty]$. In fact, a more canonical construction is possible using the lift $\mathbb X$:
	\begin{Theorem}\label{t:red-structure-group}
		Let $X$ be a smooth proper rigid space over $K$. Let $B$ be a coherent $\O_X$-algebra and set $\B:=\nu^\ast B$.
		Let $\mathbb X$ be a $\BdR^+/\xi^2$-lift of $X$. Consider the abelian sheaf on $\SmRig_{K,\et}$
		\[
		\begin{tikzcd}[column sep = 1.5cm]
			\mathcal P_{\mathbb X}=\mathcal P_{\mathbb X,B}:=\mathrm{eq}\Big(\uPic_{\B,\proet}\arrow[r, "\can", shift left] \arrow[r, "e_{\mathbb X}\circ \HTlog"', shift right] & \uPic_{\B,\proet}\tf\Big).
		\end{tikzcd}\]
		Then the natural maps define a morphism of short exact sequences
		\[\begin{tikzcd}
			0\arrow[r]& \uPic_{X'}[p^\infty]\arrow[r]\arrow[d]& \mathcal P_{\mathbb X}\arrow[r,"\HTlog"]\arrow[d]& \mathcal A_B\arrow[r]\arrow[d,equal]& 0\\
			0\arrow[r]& \uPic_{X'}\arrow[r]& \uPic_{\B,\proet}\arrow[r,"\HTlog"]& \mathcal A_B\arrow[r]& 0.
		\end{tikzcd}\]
		The first sequence is always representable by a short exact sequence of rigid groups. In particular, we have $\Lie\mathcal P_{\mathbb X} =\mathcal A_B(K)$. 
		The connected component of the identity $\mathcal P^0_{\mathbb X}$ is $p$-divisible and the morphism $\HTlog:\mathcal P^0_{\mathbb X}\to  \mathcal A_B$ is still surjective.
	\end{Theorem}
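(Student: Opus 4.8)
The plan is to read $\mathcal P_{\mathbb X}$ as the kernel of the homomorphism of abelian étale sheaves
\[\Psi:=\can-e_{\mathbb X}\circ\HTlog\colon \uPic_{\B,\proet}\longrightarrow \uPic_{\B,\proet}\tf,\]
which is exactly the equalizer in the statement. Since $\HTlog\tf\circ\can=\HTlog$ and $\HTlog\tf\circ e_{\mathbb X}=\id$ by \eqref{eq:def-e_X}, the image of $\Psi$ lies in $\ker(\HTlog\tf)=\uPic_{X'}\tf$, and $\Psi$ restricts on the subsheaf $\uPic_{X'}\subseteq\uPic_{\B,\proet}$ to $\can\colon\uPic_{X'}\to\uPic_{X'}\tf$, because $\HTlog$ kills $\uPic_{X'}$. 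The commutative diagram then follows with the evident inclusions as vertical maps, and
\[\ker\bigl(\HTlog|_{\mathcal P_{\mathbb X}}\bigr)=\uPic_{X'}\cap\ker(\can)=\uPic_{X'}\cap\uPic_{\B,\proet}[p^\infty]=\uPic_{X'}[p^\infty]\]
since $\uPic_{X'}\hookrightarrow\uPic_{\B,\proet}$. Thus the only real content of the first short exact sequence is right-exactness, i.e.\ that $\HTlog\colon\mathcal P_{\mathbb X}\to\mathcal A_B$ is an epimorphism of étale sheaves; equivalently, that $e_{\mathbb X}$ factors étale-locally through $\can$, so that $\mathcal P_{\mathbb X}\to\mathcal A_B$ is precisely the canonical reduction of structure group of the $\uPic_{X'}$-torsor $\uPic_{\B,\proet}\to\mathcal A_B$ to $\uPic_{X'}[p^\infty]$.

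For right-exactness I would combine the partial splitting of \Cref{t:rel-vPic-spectral-var}.2 with the divisibility of $[p]$ on identity components from \Cref{t:rel-vPic-spectral-var}.5. Over the open subgroup $\Lambda\otimes\G_a^+\subseteq\mathcal A_B$, the splitting $\sigma^\circ$ of $\HTlog$ built in \S\ref{s:partial-splitting} is obtained by exponentiating the Higgs--Tate torsor via $\exp\colon p\B^+\to\B^\times$; after applying $\can$ this agrees with $e_{\mathbb X}$, which exponentiates the same torsor via $\exp\colon\B\to\B^\times\tf$. Hence $\sigma^\circ$ already takes values in $\mathcal P_{\mathbb X}$, so $\HTlog|_{\mathcal P_{\mathbb X}}$ is surjective onto $\Lambda\otimes\G_a^+$; moreover, being a homomorphism out of the connected group $\Lambda\otimes\G_a^+$, it lands in $\uPic^0_{\B,\proet}$. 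For a general section $a$ of $\mathcal A_B$, after replacing the base by an étale cover we may assume $p^n a$ is a section of $\Lambda\otimes\G_a^+$ for some $n$; then $\sigma^\circ(p^n a)\in\uPic^0_{\B,\proet}$ satisfies $\can(\sigma^\circ(p^n a))=e_{\mathbb X}(p^n a)$, and since $[p]$ is an epimorphism of étale sheaves on $\uPic^0_{\B,\proet}$ by \Cref{t:rel-vPic-spectral-var}.5 we may, étale-locally, extract a $p^n$-th root $M$ of $\sigma^\circ(p^n a)$; then $\HTlog(M)=a$ and $\can(M)=[p]^{-n}e_{\mathbb X}(p^n a)=e_{\mathbb X}(a)$, so $M\in\mathcal P_{\mathbb X}$ hits $a$. (Equivalently, and closer to the ``long exact sequence'' argument alluded to before the theorem: the pushed-out torsor $\uPic_{\B,\proet}\tf\to\mathcal A_B$ is split by $e_{\mathbb X}$, and the cokernel of $\can\colon\uPic_{X'}\to\uPic_{X'}\tf$ is a locally constant sheaf because $[p]$ is surjective on $\uPic^0_{X'}$ by \Cref{l:[p]-on-Pic-fet}, so the obstruction to reducing the torsor class to $\uPic_{X'}[p^\infty]$ vanishes by \Cref{l:surjection-from-locally-constant} as $\mathcal A_B$ is connected.)

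Next, representability of the first sequence — including when $\uPic_{X'}$ is not assumed representable — follows the pattern of the proof of \Cref{t:rel-vPic-spectral-var}.3. By the Kummer sequence on $X'_{\et}$ and proper base change (\Cref{p:proper-bc}), $\uPic_{X'}[p^n]$ is a quotient of $R^1\pi'_{\et\ast}\mu_{p^n}=\underline{H^1_\et(X',\mu_{p^n})}$, hence a finite constant group; so $\uPic_{X'}[p^\infty]=\varinjlim_n\uPic_{X'}[p^n]$ is an ind-finite-étale constant group, representable unconditionally. Since $\mathcal P_{\mathbb X}\to\mathcal A_B$ is an étale torsor under this group by the reduction of structure group just obtained, it is representable by the $v$-sheaf argument of \Cref{t:rel-vPic-spectral-var}.3 (finite étale torsors over a rigid space are representable); hence the top row is a short exact sequence of rigid groups, and $\mathcal P_{\mathbb X}\to\mathcal A_B$ is étale, being étale-locally the projection off the $0$-dimensional group $\uPic_{X'}[p^\infty]$.

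Finally, since $\mathcal P_{\mathbb X}\to\mathcal A_B$ is étale it is an isomorphism on Lie algebras, giving $\Lie\mathcal P_{\mathbb X}=\mathcal A_B$. For $\mathcal P^0_{\mathbb X}$: the kernel of $[p]\colon\mathcal P_{\mathbb X}\to\mathcal P_{\mathbb X}$ is $\uPic_{X'}[p]=\underline{H^1_\et(X',\mu_p)}$, finite by the finiteness of $p$-torsion étale cohomology of the proper rigid space $X'$ as in \Cref{l:[p]-on-Pic-fet}, and $[p]$ acts as multiplication by $p$, hence as an isomorphism, on $\Lie\mathcal P_{\mathbb X}$; thus $[p]$ is finite étale, its image on $\mathcal P^0_{\mathbb X}$ is an open subgroup of the connected rigid group $\mathcal P^0_{\mathbb X}$ and therefore equals it, so $\mathcal P^0_{\mathbb X}$ is $p$-divisible. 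The same open-subgroup argument applied to the étale homomorphism $\HTlog\colon\mathcal P^0_{\mathbb X}\to\mathcal A_B$, whose image is an open subgroup of the connected group $\mathcal A_B$, gives surjectivity of $\HTlog$ on the identity component. I expect the main obstacle to be precisely the right-exactness step: propagating the honest splitting available only over $\Lambda\otimes\G_a^+$ to a section into $\mathcal P_{\mathbb X}$ over all of $\mathcal A_B$ genuinely uses the finite-étaleness of $[p]$ on $\uPic^0_{\B,\proet}$, and, when $\uPic_{X'}$ is not known to be representable, forces one to work throughout with the torsion subsheaves $\uPic_{X'}[p^n]$ — representable by Kummer theory and proper base change — rather than with $\uPic_{X'}$ itself.
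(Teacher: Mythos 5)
Your overall architecture matches the paper's, and several of your arguments for the easier parts are sound: the identification of $\ker(\HTlog|_{\mathcal P_{\mathbb X}})$ with $\uPic_{X'}[p^\infty]$, the representability of $\mathcal P_{\mathbb X}\to\mathcal A_B$ as a torsor under the ind-finite-\'etale group $\uPic_{X'}[p^\infty]$ obtained from the Kummer sequence and \Cref{p:proper-bc}, and the open-image arguments for the $p$-divisibility of $\mathcal P^0_{\mathbb X}$ and the surjectivity of $\HTlog$ on it, which are if anything more direct than the paper's cokernel computations. But the central step, right-exactness, has a genuine gap: both of your arguments invoke statements that are only available when $\uPic_{X'}$ is representable. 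Your main argument extracts $p^n$-th roots using surjectivity of $[p]$ on $\uPic^0_{\B,\proet}$ via \Cref{t:rel-vPic-spectral-var}.5, and your parenthetical alternative invokes \Cref{l:[p]-on-Pic-fet}; but parts (3)--(5) of \Cref{t:rel-vPic-spectral-var}, as well as \Cref{l:[p]-on-Pic-fet}, are stated and proved only under the hypothesis that $\uPic_{X'}$ is representable --- indeed, without representability there is no identity component $\uPic^0_{\B,\proet}$ to speak of. Since the whole point of the theorem is that $\mathcal P_{\mathbb X}$ is representable \emph{without} assuming $\uPic_{X'}$ is, this is circular in exactly the case that matters.

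The paper avoids this by working with the cokernel of $\can$ directly: applying $[p^n]$ to the unconditional short exact sequence of \Cref{t:rel-vPic-spectral-var}.1 and combining the Kummer sequence with \Cref{p:proper-bc} gives an exact sequence $\uPic_{\B,\proet}\xrightarrow{\can}\uPic_{\B,\proet}\tf\to\underline{H^2_{\et}(X',\mu_{p^\infty})}$; the image of $e_{\mathbb X}\in\uPic_{\B,\proet}\tf(\mathcal A_B)$ in the locally constant third term is a homomorphism out of the connected group $\mathcal A_B$ and hence vanishes by \Cref{l:surjection-from-locally-constant}, so $e_{\mathbb X}$ lifts \'etale-locally through $\can$, and a one-line computation shows the lift lies in $\mathcal P_{\mathbb X}$. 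Your root-extraction argument could be repaired along the same lines, replacing ``$[p]$ is surjective on $\uPic^0_{\B,\proet}$'' by ``the composite $\Lambda\otimes\G_a^+\to\coker([p^n])\hookrightarrow\underline{H^2_{\et}(X',\mu_{p^n})}$ vanishes by connectedness,'' but as written it does not establish the unconditional statement. A smaller point: the identity $\can\circ\sigma^\circ=e_{\mathbb X}$ over $\Lambda\otimes\G_a^+$ that you use holds only if the splitting $s$ in \S\ref{s:partial-splitting} is chosen to be $s_{\mathbb X}$, and even then it requires checking that the pushouts along $\exp\colon p\B^+\to\B^\times$ and $\exp\colon\B\to\B^\times\tf$ agree after applying $\can$; this is true but should be verified rather than asserted.
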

	\begin{proof}
		We first prove left-exactness:
		Let $x,y \in \mathcal P_{\mathbb X}(Y)$ be any sections over $Y\in \SmRig_{K,\et}$.
		Since $ e_{\mathbb X}$ is a section of $\HTlog\tf$, it is in particular injective. We thus have equivalences
		\[ \HTlog(x)=\HTlog(y)\Leftrightarrow e_{\mathbb X}\circ \HTlog(x)=e_{\mathbb X}\circ \HTlog(y)\Leftrightarrow \can(x)=\can(y)\]
		\[\Leftrightarrow x \cdot y^{-1}\in \uPic_{\B,\proet}[p^\infty]=\ker(\can).\]
		Considering multiplication by $p$ on the sequence of \Cref{t:rel-vPic-spectral-var}, we see that
		\[\uPic_{\B,\proet}[p^\infty]=\uPic_{X'}[p^\infty].\]
		This gives the desired left-exact sequence.
		
		To see the right-exactness, we use that by considering the cokernel of $[p]$ on the sequence of \Cref{t:rel-vPic-spectral-var}, and using \Cref{p:proper-bc}, there is a short exact sequence
		\[\uPic_{\B,\proet}\xrightarrow{\can} \uPic_{\B,\proet}\tf\to \underline{H^2_{\et}(X',\mu_{p^\infty})}.\]
		Regarding $e_{\mathbb X}$ as an element of $\uPic_{\B,\proet}\tf(\mathcal A_B)$, we see that its image in the third term is a homomorphism $\mathcal A_B\to \underline{H^2_{\et}(X',\mu_{p^\infty})}$ which has to be trivial since $\mathcal A_B$ is connected. It follows that there is an \'etale cover $\mathcal A'\to \mathcal A_B$ over which $e_{\mathbb X}$ lifts to a map $s:\mathcal A'\to \uPic_{\B,\proet}$.  Then by construction, and using that $\HTlog= \HTlog\tf  \circ \can$, we see that
		\[ e_{\mathbb X}\circ\HTlog(s)=e_{\mathbb X}\circ\HTlog\tf  \circ \can(s)=e_{\mathbb X}\circ\HTlog\tf \circ e_{\mathbb X}=e_{\mathbb X} =\can(s).\]
		Hence $s\in \mathcal P_{\mathbb X}(\mathcal A')$, and we see that $\HTlog:\mathcal P_{\mathbb X}\to \mathcal A_B$ is surjective.
		
		To see the representability, we first note that by the Kummer sequence and \Cref{p:proper-bc}, we see that
		 \[\uPic_{X'}[p^\infty]=R^1\pi'_{\ast}\mu_{p^\infty}=\underline{H^1_{\et}(X',\mu_{p^\infty})}\]
		 is representable by a locally constant rigid group. It follows that $\mathcal P_{\mathbb X}\to \mathcal A_B$ is \'etale in $\mathcal A_B^\diamondsuit$, and we deduce that it is representable by a rigid space. The statement about Lie algebras is immediate from $\Lie(\uPic_{X'}[p^\infty])=0$.
		 
		Finally, to see the claim regarding $\mathcal P^0_{\mathbb X}$, we consider multiplication by $p^n$ on the short exact sequence of the first part. Using \Cref{p:proper-bc}, we obtain an exact sequence
		\[ \mathcal P_{\mathbb X}\xrightarrow{p} \mathcal P_{\mathbb X}\to \underline{H^2_{\et}(X',\mu_{p})}.\]
		Let $N:=\mathcal P_{\mathbb X}/\mathcal P_{\mathbb X}^\circ$ be the group of connected components, then $Q:=\coker(\mathcal P^0_{\mathbb X}\xrightarrow{p} \mathcal P^0_{\mathbb X})$ sits in an exact sequence
		\[ \underline{N[p]}\to Q\to \underline{H^2_{\et}(X',\mu_{p})}.\]
		Using \Cref{l:surjection-from-locally-constant}, we deduce that the map $\mathcal P^0_{\mathbb X}\to Q$ is trivial, hence $\mathcal P^0_{\mathbb X}$ is $p$-divisible.
		
		Finally, since $\HTlog:\mathcal P^0_{\mathbb X}\to \mathcal A_B$ is still locally split over an open subgroup of $\mathcal A_B$, it follows from the $p$-divisibility of $\mathcal P^0$ that the map $\mathcal P^0_{\mathbb X}\to \mathcal A_B$ is still surjective.
	\end{proof}
	We thus get a smaller moduli space of invertible $\B$-modules which is always representable, at the expense of using the choice of $\mathbb X$.
	This will allow us to circumvent the question whether $\uPic_{X'}$ is representable while still obtaining a rigid moduli space of invertible $\B$-modules.

	\subsection{Rigidifying invertible $\B$-modules}
	While the rigid group $G$ representing the functor $\uPic_{\B,\proet}$ is unique up to unique isomorphism if it exists, we have stopped short of giving a canonical universal $\B$-module on $(X\times G)_\proet$. Instead, we only get a universal class in $H^1_\proet(X\times G,\B^\times)$ yielding an \textit{isomorphism class} of such objects. For Picard functors, when $X$ is connected, the usual way to rectify this issue is to add a rigidification at a base point $x\in X(K)$ to the moduli problem: For  any $S\in \SmRig_{K}$ and any sheaf $L$ on $(X\times S)_\proet$, we denote by $L_{|x\times S}$ (or $L_{|x}$ when $S=\Spa(K)$) the sheaf on $S_\proet$ obtained by pullback of $L$ along $x\times \id:S\to X\times S$. Then in our case, a rigidification at $x$ would be an isomorphism 
	\[ \alpha:L|_{x\times S}\isomarrow B_{|x} \otimes_K\O_S\]
	on $S_\proet$ for any invertible $\B$-module $L$ on $X\times S$. Here we regard the fibre $B_{|x}$ of $B$ at $x$ simply as a $K$-vector space.
	This gives a good notion of rigidifications when $B=\O_X$ and $X$ is connected, for the same reason that this works for the classical Picard functor: We have 
	\[\mathrm{Aut}_{\O_X}( L)=(\O(X)\otimes_K \O(S))^\times=(\O_{X|x}\otimes_K \O(S))^\times=\Aut_{\O_{X|x}}( L_{|x\times S}),\]
	hence the automorphisms of  $L$ correspond one-to-one to the automorphisms of $L|_{x\times S}$. Thus an isomorphism class of rigidified line bundles determines a line bundle up to unique isomorphism.
	
	However, this simple approach no longer works for general $B$, as the map
	$B(X)\to B_{|x}$
	is in general neither injective nor surjective. The failure to be injective means that $ L_{|x\times S}$ cannot detect all automorphisms of $ L$, which is problematic for defining moduli problems.

	The goal of this section is to solve this issue	by defining a more elaborate notion of rigidifications for invertible $\B$-modules in  $\mathcal P_{\mathbb X}$. This follows an idea suggested to us by Gerd Faltings.

	\begin{Lemma}\label{l:finite-K-alg-units}
		Let $A$ be a finite $K$-algebra such that $\Spec(A)$ is connected. Let $\eta(A)$ be the nilradical. Then for any $K$-algebra $R$, there is a canonical and functorial decomposition
		\[  (A\otimes_K R)^\times\cong (\eta(A)\otimes_K R)\times R^\times.\]
	\end{Lemma}
	\begin{proof}
		Since $K$ is algebraically closed, there is a surjective homomorphism of $K$-algebras $A\to K$ splitting the structure map. We thus have a natural exact sequence of $A$-modules
		\[ 0\to \eta(A)\to A\to K\to 0 \]
		that is functorially split.
		After tensoring with $R$ and
		passing to units, this induces a split exact sequence
		\[ 0\to 1+\eta(A)\otimes_K R\to (A\otimes_K R)^\times\to R^\times\to 0.\]
		The group isomorphism $\log: 1+\eta(A)\otimes_K R\to \eta(A)\otimes_K R$ yields the desired decomposition.
	\end{proof}
	
	\begin{Lemma}\label{l:map-on-units-split}
		Let $A\hookrightarrow A'$ be an injective homomorphism of finite $K$-algebras.
		\begin{enumerate}
			\item The presheaf $(A\otimes \O)^\times$ on $\SmRig_{K,\et}$ defined by
			\[ (A\otimes \O)^\times:S\mapsto (A\otimes_K\O(S))^\times\]
			is represented by a rigid group of the form $\G_a^r\times \G_m^{s}$ for some $r,s\in \N$.
			\item The following induced morphism of sheaves on $\SmRig_{K,\et}$ 
			is split injective:
			\[ (A\otimes \O)^\times\to (A'\otimes \O)^\times\] Its cokernel is represented by a rigid group of the form $\G_a^r\times \G_m^{s}$ for some $r,s\in \N$.
			\item 
			For any $S\in \SmRig_{K,\et}$, the following natural morphism on $S_\proet$ is split injective: \[(A\otimes_K\O_{S_\proet})^\times\to (A'\otimes_K\O_{S_\proet})^\times\]
		\end{enumerate}
	\end{Lemma}
	\begin{proof}
		Dealing with each connected component of $\Spec(A)$ separately, we can reduce to the case that $\Spec(A)$ is connected. Then the first part is immediate from \Cref{l:finite-K-alg-units}.
		
		For part 2, decompose $A'=\prod_{i=1}^n A'_i$ into $K$-algebras such that each $\Spec(A'_i)$ is connected. By \Cref{l:finite-K-alg-units}, the morphism in question is then the direct sum of
		\[\textstyle \eta(A)\otimes \O\to \prod_{i=1}^n \eta(A'_i)\otimes \O \quad \text{and} \quad \O^\times \to \prod_{i=1}^n \O^\times.\] 
		As $\eta(A)\hookrightarrow \prod_{i=1}^n\eta(A'_i)$ is an injective morphism of $K$-vector spaces, the first map is injective with cokernel isomorphic to $\O^m$ for some $m$. The second is injective with cokernel isomorphic to $(\O^\times)^{n-1}$. Thus the cokernel is represented by $\G_a^m\times \G_m^{n-1}$.
		
		Part 3 can be seen in a similar way, namely this morphism is the direct sum of split maps
		\[\textstyle \eta(A)\otimes \O_{S_\proet}\to \prod_{i=1}^n \eta(A'_i)\otimes \O_{S_\proet} \quad \text{and} \quad \O_{S_\proet}^\times \to \prod_{i=1}^n \O_{S_\proet}^\times.\qedhere\]
	\end{proof}
	
	\begin{Lemma}\label{l:injective-into-fibre}
		Let $B$ be an $\O_X$-torsionfree coherent $\O_X$-module. Then there is a finite set $M\subseteq X(K)$ such that $H^0(X,B)\hookrightarrow \prod_{x\in M}B_{|x}$ is injective, where $B_{|x}$ is the fibre of $B$ over $x$.
	\end{Lemma}
	\begin{proof}
		Let $0\neq s\in H^0(X,B)$ and let $U=\Spa(R)\subseteq X$ be any non-empty affinoid open subspace such that $s_{|U}\neq 0$. Then $B_{|U}$ is associated to a coherent $R$-module $M$. By  \cite[051S]{StacksProject}, we can find $0\neq f\in R$ such that $M_f\cong R^r_f$ for some $r\in \N$. Since $B$ is $\O_X$-torsionfree, $M$ is $R$-torsionfree, hence the image of $s$ in $M_f$ is non-zero. Then the image of $s$ in $B_{|x}$ is non-zero for any $x\in U(f\neq 0)$ away from the joint vanishing locus of the coordinates of $s$ in $R^r_f$.
		
		We now initially set $M=\{x\}$ and apply this argument inductively to non-zero elements in the kernel of $H^0(X,B)\to \prod_{x\in M}B_{|x}$. Enlarging $M$ in this way, this map becomes injective after finitely many steps since $H^0(X,B)$ is finite dimensional by Kiehl's Theorem.
	\end{proof}
	We note that in general, we cannot use $M=\{x\}$ for just one $x\in X$ (a counter-example is given by line bundles on $\P^1$, where $H^0(X,B)$ can have larger dimension than each $B_{|x}\cong K$).
	
	Moreover, 
	the assumption that $B$ is $\O_X$-torsionfree is necessary (for a counter-example, take $X=\P^1$ and $B=\O_X/I^2$ where $I$ is the sheaf of functions vanishing at $0\in \P^1$). 
	\begin{Definition}\label{d:rigidifying-set}
		\begin{enumerate}
			\item 
		A subset $M\subseteq X(K)$ is called $B$-rigidifying if it is finite and satisfies the statement of \Cref{l:injective-into-fibre}. More generally, it will later (in the proof of \Cref{t:inv-B-module-assoc-to-HF}) be convenient to allow multiplicities: We say that a multiset $M$ of elements of $X(K)$ is $B$-rigidifying if $M$ is finite and $ H^0(X,B)\hookrightarrow \prod_{x\in M}B_{|x}$ is injective.

		\item Given a lift $\mathbb X$ of $X$ to $\BdR^+/\xi^2$ and any finite set $\mathbb M\subseteq \mathbb X(\BdR^+/\xi^2)$, we define the reduction of $\mathbb M$ to be the multiset given by the image of $\mathbb M$ under the reduction map $ \mathbb X(\BdR^+/\xi^2)\to X(K)$. Here we count with multiplicities, which is the motivation to allow multisets. We say that $\mathbb M$ is $B$-rigidifying if its  reduction $M$ is  $B$-rigidifying.
		\end{enumerate}
	\end{Definition}
	
	Throughout the rest of this subsection, let $B$ be an $\O_X$-torsionfree coherent $\O_X$-algebra on the smooth proper rigid space $X$ over $K$, and set $\mathcal B:=\nu^{\ast}B$ on $X_{\proet}$.
	
	As a first application of rigidifying sets, we can use them to prove:
	\begin{Proposition}\label{p:explicit-description-of-Pic_B}
		Let $S\in \SmRig_{K}$. With notations as in \Cref{d:Pic_proet}, the natural maps induce isomorphisms
		\[ \uPic_{B,\proet}(S)=H^1_{\proet}(X\times S,\B^\times)/H^1_{\proet}(S,\pi_{S,\proet\ast}\B^\times)=H^0(S,R^1\pi_{S,\proet\ast}\B^\times).\]
	\end{Proposition}
	\begin{proof}
		The Leray sequence of $\pi_{S,\proet}$ yields a 5-term exact sequence of pro-\'etale cohomologies
		\[H^1(S,\pi_{S\ast}\B^\times)\to H^1(X\times S,\B^\times)\to H^0(S,R^1\pi_{S\ast}\B^\times)\to  H^2(S,\pi_{S\ast}\B^\times)\to H^2(X\times S,\B^\times).\]
		It suffices to prove that the last morphism is injective. To this end, recall from \eqref{eq:coh-bc} that $ \pi_{S\ast}\B=H^0(X,B)\otimes_K\O_{S_\proet}$. By \Cref{l:injective-into-fibre}, there is a $B$-rigidifying set $M\subseteq X(K)$. The specialisation at each $x\in M$ then defines a morphism
		\[\textstyle H^2_{\proet}(S,\pi_{S\ast}\B^\times)\to H^2_{\proet}(X\times S,\B^\times)\to \prod_{x\in M} H^2_{\proet}(S,(B_{|x}\otimes_K \O_{S_\proet})^\times).\]
		Setting  $A:=H^0(X,B)$ and $A':=\prod_{x\in M}  B_{|x}$, we deduce from \Cref{l:map-on-units-split}.3 that this composition is split injective. Hence the first map is injective, as we wanted to see.
	\end{proof}
	
	\begin{Lemma}\label{l:QM}
		For any $B$-rigidifying multiset $M$ and any $S\in \SmRig_{K,\et}$, the specialisation map
		\[\textstyle s_M(S):H^0(X\times S,\B)\to \prod_{x\in M}B_{|x}\otimes_K\O(S)\]
		is injective. The presheaf on $\SmRig_{K,\et}$ defined by the cokernel of the associated map on units
		\[ Q^M:S\mapsto \big(\textstyle\prod_{x\in M}B_{|x}\otimes_K\O(S)\big)^\times/ H^0(X\times S,\B^\times),\]
		is a sheaf represented by a rigid group of the form $Q^M\cong \G_a^r\times \G_m^s$ for some $r,s\geq 0$.
	\end{Lemma}
	\begin{proof}
		Let $A:=H^0(X,B)$ and $A':=\prod_{x\in M}B_{|x}$, then \Cref{eq:coh-bc} shows that $s_M$ is of the form
		\[ A\otimes_K\O\to A'\otimes_K\O\]
		for the injective morphism of finite $K$-algebras $A\to A'$.  This shows part 1. Second, $Q^M$ is the cokernel of the units of this map, hence part 2 now follows from \Cref{l:map-on-units-split}.2.
	\end{proof}
	
	To simplify notation in the following, we denote $L_{|x\times S}$ by $L_{|x}$ when $S$ is clear from context.
	
	\begin{Definition}
		Let $M$ be a $B$-rigidifying multiset of $K$-points of $X$. Let $S\in \SmRig_{K,\et}$. Then an $M$-rigidified invertible $\B$-module on $X\times S$ is a pair $\mathcal L=(L,(\alpha_x)_{x\in M})$ consisting of an invertible $\B$-module $L$ on $X\times S$ and for each $x\in M$ an isomorphism of $\mathcal B_{|x}$-modules on $S_\proet$
		\[ \alpha_x: L_{|x}\isomarrow \mathcal B_{|x}.\]
		An isomorphism of $M$-rigidified invertible $\B$-modules $\psi:( L,(\alpha_x)_{x\in M})\isomarrow ( L',(\alpha_x')_{x\in M})$ is an isomorphism of  $\B$-modules $\psi: L\to  L'$ such that $\alpha_x'\circ \psi_{|x}=\alpha_x$ for each $x\in M$.
	\end{Definition}
	\begin{Remark}\label{r:tensor-rigidif-B-mods}
	For any two $M$-rigidified invertible $\B$-modules $\mathcal L=( L,(\alpha_x)_{x\in M})$,  $\mathcal L'=( L',(\alpha_x')_{x\in M})$, the tensor product $\mathcal L\otimes \mathcal L':=( L\otimes_{\mathcal B}  L',(\alpha_x\otimes \alpha'_{x})_{x\in M})$ is again an $M$-rigidified invertible $\B$-module. The ``trivial'' $M$-rigidified $\B$-module $(\mathcal B,(\id\colon\mathcal B_{|x}\to \mathcal B_{|x})_{x\in M})$ defines a neutral element and  $( L,(\alpha_x)_{x\in M})$ has an inverse $( L^{\vee},(\alpha_x^{\vee -1})_{x\in M})$ with respect to $\otimes$.
	\end{Remark}
	
	\begin{Lemma}\label{l:isom-between-rigidified-modules-unique}
		Let $\mathcal L$ and $\mathcal L'$  be two isomorphic $M$-rigidified invertible $\B$-modules on $X\times S$. Then the isomorphism $\mathcal L\to \mathcal L'$ is unique.
	\end{Lemma}
	\begin{proof}
		Let $\beta_1,\beta_2:\mathcal L\to \mathcal L'$ be two isomorphisms of $M$-rigidified invertible $\B$-modules. It suffices to prove that $\beta:=\beta_1^{-1}\circ\beta_2:\mathcal L\to \mathcal L$ is the identity. Write $\mathcal L=(L,(\alpha_x)_{x\in M})$, then the underlying isomorphism $\beta:L\to L$ is given by an element $\beta\in H^0(X\times S,\B^\times)$ subject to the condition that $\alpha_x=\alpha_x\cdot \beta_{|x}$ for all $x$. The latter is equivalent to $\beta_{|x}=1$, hence $\beta$ is in the kernel of $H^0(X\times S,\B^\times)\to \prod_{x\in M}(B_{|x}\otimes_K\O(S))^\times$. This is trivial by \Cref{l:QM}.
	\end{proof}
	
	\begin{Definition}
		\begin{enumerate}
			\item 
			Let $M$ be any $B$-rigidifying multiset. The $M$-rigidified pro-\'etale Picard functor of $B$ is the presheaf
			\[\uPic^M_{\B,\proet}\colon\SmRig_{K,\et}\to \mathrm{Ab}\]
			sending $S$ to  the set of isomorphism classes of $M$-rigidified invertible $\B$-modules on $(X\times S)_\proet$. By \Cref{r:tensor-rigidif-B-mods}, this has a group structure induced by $\otimes$. 
			\item The forgetful functor $(L,(\alpha_x)_{x\in M})\mapsto L$ defines a morphism of presheaves
			\[\uPic^M_{\B,\proet}\to  \uPic_{\B,\proet}.\]
			We denote by $\mathcal P^M_{\mathbb X}\to \mathcal P_{\mathbb X}$ (or $\mathcal P^M_{\mathbb X,\B}$ to indicate $\B$) the fibre of this morphism over the sheaf $\mathcal P_{\mathbb X}\to \uPic_{\B,\proet}$ from \Cref{t:red-structure-group}.
		\end{enumerate}
	\end{Definition}
	\begin{Lemma}
		The presheaf $\uPic^M_{\B,\proet}$ is a sheaf. In particular, $\mathcal P^M_{\mathbb X}$ is a sheaf.
	\end{Lemma}
	\begin{proof}
		If $\varphi:S'\to S$ is a cover in $\SmRig_{K,\et}$ and  $\mathcal L$ is an $M$-rigidified invertible $\B$-module on $X\times S'$ with an isomorphism $\psi:\pi_1^\ast \mathcal L\isomarrow \pi_2^\ast \mathcal L$ over $X\times S'\times_SS'$, then $\psi$ automatically satisfies the cocycle condition by \Cref{l:isom-between-rigidified-modules-unique}, so $\mathcal L$ descends to  $S$.
		
		Similarly, let $\mathcal L$ be any $M$-rigidified invertible $\B$-module on $X\times S$ and assume that there is an isomorphism $\psi:\varphi^{\ast}\mathcal L\isomarrow \mathcal (\B,(\id)_{x\in M})$ over $X\times S'$. Then  $\pi_1^{\ast}\psi=\pi_2^{\ast}\psi$ on $X\times S'\times_SS'$ by \Cref{l:isom-between-rigidified-modules-unique}, hence $\psi$ descends to an isomorphism $\mathcal L\isomarrow (\B,(\id)_{x\in M})$ over $X\times S$.
	\end{proof}
	
	\begin{Theorem}\label{t:rigidifed-PicB}Let $B$ be an $\O_X$-torsionfree coherent $\O_X$-algebra on the smooth proper rigid space $X$ and $\mathcal B=\nu^{\ast}B$ on $X_{\proet}$.
		Let $M$ be any $B$-rigidifying multiset of points of $X(K)$. 
		\begin{enumerate}
			\item  The forgetful map defines a natural short exact sequence of abelian sheaves on $\SmRig_{K,\et}$
			\[ 0\to Q^M\to \uPic^M_{\B,\proet}\to \uPic_{\B,\proet} \to 0\]
			where the first map is given by sending $\alpha=(\alpha_x)_{x\in M}$ to $(\mathcal B,(\alpha_x:B_{|x}\xrightarrow{\cdot \alpha_x} B_{|x})_{x\in M})$. In particular, $\uPic^M_{\B,\proet}$ is representable by a rigid group if $\uPic_{\B,\proet}$ is.
			\item Pullback along $\mathcal P_{\mathbb X}\to \uPic_{\B,\proet}$ induces a short exact sequence on $\SmRig_{K,\an}$
			\[ 0\to Q^M\to \mathcal P^M_{\mathbb X}\to \mathcal P_{\mathbb X}\to 0.\]
			In particular, $\mathcal P^M_{\mathbb X}$ is always representable by a rigid group.
		\end{enumerate}
	\end{Theorem}
	\begin{proof}
		
		We first consider the morphism $Q^M\to \uPic^M_{\B,\proet}$ in part 1. Let $S\in \SmRig_{K}$ and $\alpha, \alpha' \in \prod_{x\in X}(B_{|x}\otimes \O(S))^\times$, then an isomorphism between the associated $M$-rigidified invertible $\B$-modules $(\B,(\alpha_x))$ and $(\B,(\alpha'_x))$ on $X\times S$ is the datum of an isomorphism $\beta:\B\to \B$ over $X\times S$ such that $\alpha_x=\alpha'_x\circ \beta_{|x}$ for all $x\in M$. Here $\beta$ is necessarily given by multiplication by an element in $H^0(X\times S,\B^\times)$. This shows that the first map is well-defined and injective.
		
		Next, we show  exactness in the middle: Let $\mathcal L=(L,(\alpha_x)_{x\in M})$ be an $M$-rigidified invertible $\B$-module on $X\times S$ that goes to $1\in \uPic_{\B,\proet}(S)$. This means that the isomorphism class of $L$  has a preimage $L_0$ under the map $H^1_\proet(S,\pi_{S,\proet\ast}\B^\times)\to H^1_{\proet}(X\times S,\B^\times)$. The existence of the rigidifications $\alpha_x$ means that $L_0$ lies in the kernel of the specialisation map
		\[\textstyle H^1_\proet(S,\pi_{S,\proet\ast}\B^\times)\to \prod_{x\in M} H^1_\proet(S,\B_{|x}^\times).\]
		Using  the identification $\pi_{S,\proet\ast}\B=B(X)\otimes_K \O_{S_\proet}$ from \eqref{eq:coh-bc}, we now see from \Cref{l:map-on-units-split}.3 that this map is split injective. Hence $L_0$ is trivial, thus so is $L$. We can therefore without loss of generality assume that $L=\B$. Hence $\mathcal L$ is in the image of $Q^M\to  \uPic^M_{\B,\proet}$.
		
		For part 1, it remains to prove surjectivity on the right: By \Cref{p:explicit-description-of-Pic_B}, any class in $\uPic_{\B,\proet}(S)$ comes from an invertible $\B$-module $L$ on $X\times S$. In order to lift this to $\uPic^M_{\B,\proet}(S)$, it would suffice to see that $L_{|x}\cong \B_{|x}$ for each $x\in M$. To this end, recall that by \Cref{c:Leray-seq-for-B^x}.2 and the discussion below it, using functoriality in $X$ applied to the morphism $x:\Spa(K)\to X$, we have a commutative diagram
		\[\begin{tikzcd}[column sep = 0.3cm]
			{H^1_{\et}(X\times S,B^\times)} \arrow[r] \arrow[r]\arrow[d]  & {H^1_{\proet}(X\times S,\B^\times)} \arrow[r]\arrow[d] & {H^0(X,B)\otimes_K\wtOm_S(S)\oplus H^0(X,B\otimes \wtOm)\otimes_K  \O(S)} \arrow[d].\\
			{H^1_{\et}(S,B_{|x}^\times)} \arrow[r] \arrow[r]  & {H^1_{\proet}(S,\B_{|x}^\times)} \arrow[r] & {B_{|x}\otimes_K \wtOm_S(S)}
		\end{tikzcd}\]
		in which the last vertical map is the projection to the first factor followed by $H^0(X,B)\to B_{|x}$. According to \Cref{t:rel-vPic-spectral-var}.1, after replacing $S$ by an \'etale cover and tensoring $L$ with an element in $H^1_{\proet}(S,\pi_{S,\proet \ast}\B^\times)$, we can assume that the image of $L$ in the top right corner is concentrated in the second factor  $H^0(X,B\otimes \wtOm)\otimes  \O(S)$. Hence its image in the bottom right corner is $0$. This means precisely that $L_{|x}$ is \'etale-locally isomorphic to $\B_{|x}$ on $S$. After replacing $S$ by an \'etale cover, we therefore have $L_{|x}\cong \B_{|x}$ for each $x\in M$, as desired.
		
		This proves part 1. For part 2, it is immediate that we obtain a sequence on $\SmRig_{K,\et}$ by pullback. In particular, $\mathcal P^M_{\mathbb X}\to \mathcal P_{\mathbb X}$ is a $Q^M$-torsor. But by \Cref{l:QM}, we have $Q^M\cong \G_a^r\times \G_m^{s}$ for some $r,s\in \N$, so any such torsor is already trivial in the analytic topology. Hence the sequence is exact on $\SmRig_{K,\an}$ and it follows that $\mathcal P^M_{\mathbb X}$ is representable.
	\end{proof}
	We now describe the Lie algebra of $\mathcal P^M_{\mathbb X}$. By \Cref{t:red-structure-group}, this will sit in an exact sequence
	\[ 0\to \Lie Q^M\to \Lie \mathcal P^M_{\mathbb X}\to \mathcal A_B(K)\to 0.\]
	To give an explicit and functorial description of $\Lie \mathcal P^M_{\mathbb X}$, we introduce an additive variant of rigidified Picard functors, parametrising additive $\B$-torsors instead of invertible $\B$-modules:
	\begin{Definition}
		Let $L\mathcal P^M_{\mathbb X}=L\mathcal P^M_{\mathbb X,B}$ be the presheaf \[L\mathcal P^M_{\mathbb X}\colon\SmRig_{K,\et}\to \mathrm{Ab},\quad S\mapsto (a,[(\alpha_x)_{x\in M}])\]
		defined as follows: The first entry is a morphism  $a:S\to \mathcal A_B$ over $K$, giving rise to a $\B$-torsor $a^{\ast}L_{\mathbb X,B}$ on $X\times S$ via pullback of the $\B$-torsor $L_{\mathbb X,B}$ from \Cref{d:L_XB-e_X} along $\id\times a:X\times S\to X\times \mathcal A_B$. Then $\alpha_x$ is an isomorphism of $\B_{|x}$-torsors $(a^{\ast}L_{\mathbb X,B})_{|x}\to \B_{|x}$. We call two tuples $(\alpha_x)_{x\in M}$, $(\alpha_x')_{x\in M}$  isomorphic when there is $\beta\in H^0(X\times S,\B)$ such that $\alpha_x'+ \beta_{|x} = \alpha_x$. Then $[(\alpha_x)_{x\in M}]$ denotes the associated isomorphism class of this equivalence relation.
	\end{Definition}
	
	\begin{Lemma}\label{l:LPXM}
		There is a natural short exact sequence of presheaves on $\SmRig_{K,\et}$
		\[ 0\to LQ^M\to L\mathcal P^M_{\mathbb X}\to \mathcal A_B\to 0\]
		where $LQ^M$ is the rigid vector group defined by sending $S\in \SmRig_{K,\et}$ to 
		\[ LQ^M(S)=\big((\textstyle\prod_{x\in M}\B_{|x})/ H^0(X,\B)\big)\otimes_K \O(S).\]
		In particular, $L\mathcal P^M_{\mathbb X}$ is a sheaf represented by a rigid vector group, i.e.\ $L\mathcal P^M_{\mathbb X}=L\mathcal P^M_{\mathbb X}(K)\otimes_K\G_a$.
	\end{Lemma}
	\begin{proof}
		The $\B$-torsor $L_{\mathbb X,B}$  on $X\times \mathcal A_B$ is by construction the pushout of a pullback of a $\nu^\ast\wtOm^\vee_X$-torsor on $X_\proet$. Hence $(L_{\mathbb X,B})_{|x}$ is a trivial $\B_{|x}$-torsor on $\mathcal A_B$, so $(a^{\ast}L_{\mathbb X,B})_{|x}$ is isomorphic to $\B_{|x}$ on $S_\proet$. This shows surjectivity on the right. For left-exactness, note that $a^\ast L_{\mathbb X,B}=\B$ when $a$ is the zero-section $0:S\to \mathcal A_B$. Using \Cref{eq:coh-bc} to identify $ H^0(X\times S,\B)=H^0(X,\B)\otimes \O(S)$, the isomorphism classes of rigidifications of $\B$ are then given by $LQ^M(S)$.
		
		Hence the sequence of presheaves is exact. This implies that $L\mathcal P^M_{\mathbb X}=L\mathcal P^M_{\mathbb X}(K)\otimes_K\G_a$ because the analogous statement holds for the two outer terms. Thus $L\mathcal P^M_{\mathbb X}$ is a sheaf.
	\end{proof}
	\begin{Lemma}\label{l:Lie-of-PMX}
		\begin{enumerate}
			\item There is a canonical identification, functorial in $(X,\mathbb X)$, $M$ and $B$,
			\[ \Lie \mathcal P^M_{\mathbb X}=L\mathcal P^M_{\mathbb X}(K).\]
			\item For any larger finite multiset $M'\supseteq M$ of $K$-points of $X$, the forgetful map defines a natural morphism $\mathcal P^{M'}_{\mathbb X}\to \mathcal P^M_{\mathbb X}$. Via (1), its derivative is identified with the forgetful morphism $L\mathcal P^{M'}_{\mathbb X}\to L\mathcal P^M_{\mathbb X}$ evaluated on $K$.
		\end{enumerate}
	\end{Lemma}
	\begin{proof}
		Comparing the definition of $L\mathcal P^{M}_{\mathbb X}$ to that of $\mathcal P^{M}_{\mathbb X}$ in \Cref{t:red-structure-group}, we see that pushout along $\exp:\mathcal B\to \B^\times\tf$ defines compatible morphisms of sheaves on $\SmRig_{K,\et}$, written vertically, 
		\[\begin{tikzcd}[column sep = 0.3cm,row sep = 0.5cm]
			0\arrow[r]&{ LQ^{M}}\arrow[r]\arrow[d]  & {L\mathcal P^{M}_{\mathbb X}} \arrow[r]\arrow[d] & {\mathcal A_B} \arrow[d,"\id"]\arrow[r]&0\\
			0\arrow[r]&{Q^{M}\tf} \arrow[r] \arrow[r]  & {\mathcal P^{M}_{\mathbb X}\tf }\arrow[r] & {\mathcal A_B}\arrow[r]&0.
		\end{tikzcd}\]
		Recall from \eqref{eq:Lie-as-Hom} that the functor $\Lie(-)$ on commutative rigid groups extends to a functor $\Hom(\G_a^+,-\tf)$ on abelian sheaves on $\SmRig_{K,\et}$.
		Since $Q^{M}\cong \G_a^r\times \G_m^s$ by \Cref{l:QM}, the left and right vertical maps are isomorphisms after applying $\Hom(\G_a^+,-\tf)$. Hence the same is true for the middle map by the 5-Lemma: Here we use that $\Hom(\G_a^+,-\tf)$ is left-exact, and even exact for the top row because by \Cref{l:LPXM}, this is a short exact sequence of rigid vector groups. This shows part 1.
		
		The second part about $M'$ follows immediately from this construction because the forgetful maps are clearly compatible with the above morphism of short exact sequences.
	\end{proof}
	
	\begin{Definition}\label{p:lift-induces-elt-in-LiePM}
		Let $x\in X(K)$. Assume we are given a lift $\tilde x:\Spa(\BdR^+/\xi^2)\to \mathbb X$ of $x$. Then the functoriality of $L_{\mathbb X}$ in $(X,\mathbb X)$ of \Cref{l:Faltings-ext}.2 applied to the morphism \[(x,\tilde x):(\Spa(K),\Spa(\BdR^+/\xi^2))\to (X,\mathbb X)\] induces an isomorphism of $\B_{|x}$-torsors
		$\alpha_{(x,\tilde{x})}:(L_{\mathbb X,B})_{|x}\to \B_{|x}$.
		By definition of $L\mathcal P^M_{\mathbb X}$ and \Cref{l:Lie-of-PMX}, the datum $\mathbb M$ of a lift $\tilde{x}$ for each $x\in M$ therefore induces a natural section
		\[ s_{\mathbb X,\mathbb M}:\mathcal A_B(K)\to L\mathcal P^M_{\mathbb X}(K)=\Lie \mathcal P^M_{\mathbb X}.\]
		This construction is clearly functorial in $(X,\mathbb X)$, $\mathbb M$ and $B$.
	\end{Definition}

	\subsection{The $p$-adic exponential for rigid groups}\label{s:exp-for-rigid-groups}
	As we have already used earlier, recall that the $p$-adic exponential is as usually defined as the continuous group isomorphism
	\[ \textstyle\exp:p^{\alpha}\mathfrak m_K\to 1+p^{\alpha}\m_K,\quad x\mapsto \sum_{n=0}^\infty\tfrac{x^n}{n!}\]
	where $\alpha=\frac{1}{p-1}$ for $p\geq 3$, and $\alpha=2$ for $p=2$. Second, we have the $p$-adic logarithm 
	\[\textstyle\log:1+\m_K\to K,\quad x\mapsto \sum_{n=1}^\infty(-1)^{n+1}\tfrac{(x-1)^n}{n}\]
	whose radius of convergence is larger and that is surjective because $K$ is algebraically closed.
	
	\begin{Definition}
		An exponential for $K$ is a group homomorphism $\Exp\colon K\to 1+\m_K$ s.t.\:
		\begin{enumerate}
			\item $\Exp$ splits the logarithm, meaning that $\log\circ\Exp=\id$, and
			\item $\Exp$ restricts to $\exp$ on $p^{\alpha}\mathfrak m_K$.
		\end{enumerate}
		It is clear from (2) that $\Exp$ is automatically continuous. An exponential for $K$ always exists (see e.g.\ \cite[Lemma~6.2]{HMZ}).
	\end{Definition}
	
	For any commutative rigid group $G$ over $K$, let $\wh{G}=\uHom(\underline{\Z}_p,G)\subseteq G$ be the topological $p$-torsion sub-v-sheaf from \cite[\S2]{heuer-geometric-Simpson-Pic}. By \cite[Prop.~2.14]{heuer-geometric-Simpson-Pic}, $\wh{G}$ is an open rigid subgroup. If $[p]:G\to G$ is surjective, $\wh{G}$ is an analytic $p$-divisible group in the sense of Fargues by \cite[Prop.~6.10]{HMZ}. By \cite[\S2]{Fargues-groupes-analytiques}, we then have a logarithm map that fits into an exact sequence of rigid groups 
	\[ 0\to G[p^\infty]\to \wh{G}\xrightarrow{\log_G} \Lie(G)\otimes_K \G_a\to 0\] 
	where $\Lie(G)$ is the Lie algebra of $G$, i.e.\ the tangent space at the identity. We now use:
	
	\begin{Theorem}[{\cite[Thm.~6.12]{HMZ}}]\label{t:rigid-group-exp}
		Let $G$ be a commutative rigid group such that $[p]\colon G\to G$ is surjective on $K$-points. Then any exponential $\Exp\colon K\to 1+\m$ induces a continuous splitting
		\[ \Exp_G: \Lie(G)\to \wh{G}(K)\subseteq G(K)\]
		of the logarithm, i.e.\ $\log_G\circ\Exp_G=\id$. For fixed $\Exp$, the map $\Exp_G$ is functorial in $G$.
	\end{Theorem}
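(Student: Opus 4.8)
The statement is taken from \cite[Thm.~6.12]{HMZ}; here I indicate the strategy I would follow. The plan is to build $\Exp_G$ from the ``formal'' exponential of $\widehat G$ near the identity, extend it to all of $\Lie(G)$ using that $\widehat G$ is $p$-divisible, and use the given $\Exp$ to fix the resulting ambiguity of $p$-power roots; the construction is to be arranged so that it reduces, functorially in $G$, to the case $G=\G_m$, where $\Exp_{\G_m}=\Exp$.

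First I would set up the two exact sequences in play. Since $[p]$ is surjective, $\widehat G$ is an analytic $p$-divisible group by \cite[Prop.~6.10]{HMZ}, so by \cite{Fargues-groupes-analytiques} there is the logarithm sequence $0\to G[p^\infty]\to\widehat G\xrightarrow{\log_G}\Lie(G)\otimes_K\G_a\to 0$ of rigid groups; as $G[p^\infty]$ is a filtered union of finite \'etale $K$-groups and $K$ is algebraically closed, it stays exact on $K$-points. Since $\log_G$ is \'etale and the identity is an isolated point of the fibre over $0$, it admits a section $\exp^0\colon V\to\widehat G$ over an open subgroup $V\subseteq\Lie(G)\otimes\G_a$, and after shrinking $V$ this section is a homomorphism of rigid groups: uniqueness of the preimage of a small subgroup under $\log_G$ forces $\exp^0(v)\exp^0(w)=\exp^0(v+w)$. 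This ``formal exponential'' $\exp^0$ is automatically continuous and functorial in $G$, and is the starting point.

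Next I would extend $\exp^0$ to all of $\Lie(G)$. Given $v\in\Lie(G)$, pick $n$ with $p^nv\in V(K)$; then $\Exp_G(v)$ must be a $p^n$-th root of $g_n:=\exp^0(p^nv)$ in $\widehat G(K)$, and such roots form a torsor under $G[p^n](K)$. Making these roots coherent in $n$ and compatible with addition in $v$ is exactly what the hypothesis on $\Exp$ buys. To see this transparently I would pass to the universal cover $\widetilde G:=\varprojlim_{[p]}\widehat G$: surjectivity of $[p]$ on $K$-points gives $0\to T_pG\to\widetilde G(K)\to\widehat G(K)\to 0$ and, via $\log_G$, an exact sequence $0\to V_pG\to\widetilde G(K)\xrightarrow{\widetilde{\log}}\Lie(G)\to 0$ with $V_pG=T_pG[\tfrac{1}{p}]$, and a continuous homomorphic splitting of $\widetilde{\log}$ composes with $\widetilde G(K)\to\widehat G(K)$ to give $\Exp_G$. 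Such a splitting is produced from $\Exp$ by reduction to $G=\G_m$: there $\widehat{\G_m}(K)=1+\m_K$, $\log_{\G_m}=\log$, and $\Exp$ is by hypothesis a continuous homomorphic splitting of $0\to\mu_{p^\infty}(K)\to 1+\m_K\xrightarrow{\log}K\to 0$, so $v\mapsto\bigl(\Exp(v/p^k)\bigr)_{k\geq0}$ splits the corresponding universal-cover sequence $0\to\Q_p(1)\to\widetilde{\G_m}(K)\xrightarrow{\widetilde{\log}}K\to 0$ --- a form of the fundamental exact sequence of $p$-adic Hodge theory. For general $G$ one expresses $\widetilde G$, via Fargues' classification of analytic $p$-divisible groups, in terms of $\widetilde{\G_m}$ and the Hodge--Tate datum $(T_pG,\Lie(G))$, and transports this splitting; the identity $\log_G\circ\Exp_G=\id$ then follows from $\log\circ\Exp=\id$.

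Finally I would verify the remaining points: independence of the auxiliary $n$ and the identity $\Exp_G(v+w)=\Exp_G(v)\,\Exp_G(w)$ reduce to $\exp^0$ and $\Exp$ being homomorphisms together with the coherence of the root systems; continuity follows from that of $\exp^0$ and $\Exp$ and the $p$-adic topology on $\widehat G(K)$; the image lies in $\widehat G(K)$ since the chosen root systems are topologically $p$-torsion; and functoriality in $G$ for fixed $\Exp$ holds because the logarithm sequence, $\exp^0$, and the reduction to $\G_m$ are all natural in $G$. I expect the hard part to be precisely the coherent choice of $p$-power roots: carrying out the reduction to $\G_m$ rigorously requires Fargues' description of $\widetilde G$ as a Banach--Colmez-type space, and one must be careful that $\widehat G(K)$, unlike $1+\m_K$, carries no a priori $\Z_p(1)$-structure, so the $\Exp$-normalisation has to be phrased intrinsically --- through the logarithm of the universal cover --- rather than by choosing a basis of $T_pG$.
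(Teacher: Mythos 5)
First, a point of reference: this theorem is not proved in the paper at all --- it is quoted verbatim from \cite[Thm.~6.12]{HMZ}, so there is no internal proof to compare your sketch against; the paper only records that the result rests on Fargues' theory of $p$-divisible rigid groups and that Faltings proved the analogue for algebraic groups. Measured against that provenance, your skeleton is the right one: Fargues' logarithm sequence $0\to G[p^\infty]\to\wh{G}\xrightarrow{\log_G}\Lie(G)\otimes\G_a\to 0$; the local section $\exp^0$ on an open subgroup (your justification is fine, since $\log_G$ is \'etale, hence has discrete kernel, so a small enough open subgroup of $\wh{G}$ meets $G[p^\infty]$ trivially); the passage to $\wt{G}=\varprojlim_{[p]}\wh{G}$ and the sequence $0\to V_pG\to\wt{G}(K)\to\Lie(G)\to 0$ (your deduction of surjectivity of $[p]$ on $\wh{G}(K)$ from surjectivity on $G(K)$ is correct); and the observation that for $\G_m$ an exponential is precisely a continuous splitting of this sequence.

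However, the step that carries all of the content --- producing from $\Exp$ a continuous splitting of $0\to V_pG\to\wt{G}(K)\to\Lie(G)\to 0$ for general $G$, functorially in $G$ --- is exactly the one you leave as ``express $\wt{G}$ in terms of $\wt{\G}_m$ via Fargues' classification and transport the splitting'', and as stated this is a gap rather than a proof. Fargues' structure theory describes $\wh{G}$ through the datum of its Tate module, Lie algebra and Hodge--Tate map; it does not hand you a presentation of $\wt{G}$ as assembled from copies of $\wt{\G}_m$, and any such presentation (equivalently a choice of basis of $T_pG$ together with a lift of the relevant extension class) is non-canonical: two presentations differ by an element of $\Hom_{\Q_p}(\Lie(G),V_pG)$ and induce splittings differing by exactly such a homomorphism. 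Since the sequence splits continuously for soft reasons anyway (a finite-dimensional closed $\Q_p$-subspace of a $\Q_p$-Banach space is complemented), the entire point of the theorem is the canonical, functorial choice of splitting, and the argument cannot stop where yours does: one must say which splitting $\Exp$ singles out intrinsically and why a morphism $G\to H$ carries one to the other. You correctly diagnose this as ``the hard part'' in your final paragraph, but diagnosing the difficulty is not the same as resolving it, so the proposal is incomplete at its crux.
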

	
	Applying \Cref{t:rigid-group-exp} to the identity component of the rigid group $\mathcal P_{\mathbb X}$ from \Cref{t:red-structure-group}, we obtain for any coherent $\O_X$-algebra $B$ an exponential map
	\begin{equation}\label{eq:exp-on-P_X}
	 \Exp:\mathcal A_B(K)\to \mathcal P_{\mathbb X}(K)\subseteq \uPic_{\B,\proet}(K)
	 \end{equation}
	 that is functorial in $B$.
	In the next section, we will explain that one can also apply \Cref{t:rigid-group-exp} to the rigidified Picard functors $\mathcal P^M_{\mathbb X}$ of \Cref{t:rigidifed-PicB}, thus yielding a functorial construction of ``exponential $\B$-modules'', not only isomorphism classes of such.
	
	\subsection{Invertible $\B$-modules via the exponential}
	Finally, we now consider the case that $B$ is a $\Sym^\bullet_{\O_X} \wtOm_X^\vee$-algebra on $X_{\et}$ that is coherent over $\O_X$. Then $\mathcal A_B$ has a canonical section:
	
	\begin{Definition}\label{d:can-section-tau}
		Let $X$ be any smooth rigid space. Set $T_X:=\Sym^\bullet_{\O_X} \wtOm_X^\vee$ and let $B$ be any $T_X$-algebra. As $\wtOm_X^\vee$ is projective, the induced $\O_X$-linear map $\wtOm_X^\vee\to B$ dualises to a morphism 
		$\tau_B:\O\to \wtOm_X\otimes B$. This defines a tautological section $\tau_B\in H^0(X,\wtOm_X\otimes B)=\mathcal A_B(K)$.
	\end{Definition}
	
	We can now finally combine all results that we have discussed up until this point to obtain the following, which summarises the key technical construction of this article:
	\begin{Theorem}\label{t:inv-B-module-assoc-to-HF} 
		Let $X$ be a smooth proper rigid space over $K$. Let $\nu=\nu_X:X_\proet\to X_{\et}$ be the natural map. Assume we are given:
		\begin{itemize}[-]
			\item a $\BdR^+/\xi^2$-lift $\mathbb X$ of $X$, and
			\item an exponential $\mathrm{Exp}:K\to 1+\m$.
		\end{itemize}
		Then there is a way  to associate to any $\O_X$-coherent $\O_X$-torsionfree $T_X$-algebra $B$ an invertible $\mathcal B:=\nu^{\ast}B$-module $\mathcal L_B$ on $X_{\proet}$ in such a way that the following conditions hold:
		\begin{enumerate}
			\item We have $\HTlog(\mathcal L_B)=\tau_B\in \mathcal A_B(K)$. More precisely, the class of $\mathcal L_B$ in $\uPic_{\B,\proet}(K)$ is given by the image of $\tau_B$ under the map $\Exp:\mathcal A_B(K)\to \mathcal P_{\mathbb X}(K)$ from \eqref{eq:exp-on-P_X}.
			\item $\mathcal L_B$ is natural in $B$: To any morphism $\phi:B\to B'$ of $\O_X$-coherent $\O_X$-torsionfree $T_X$-algebras  with $\B':=\nu^{\ast}B'$, we can associate an isomorphism 
			\[\psi_{\phi}:\mathcal L_B\otimes_{\B}{\B'}\isomarrow \mathcal L_{B'}\] of $\B'$-modules on $X_\proet$, in a way that is compatible with compositions.
			\item $\mathcal L_B$ is natural in $(X,\mathbb X,K,\Exp)$: 
			Let $K\hookrightarrow K'$ be any continuous homomorphism into a complete algebraically closed field $K'$ and let $\Exp'$ be an exponential for $K'$ that restricts to $\Exp$ on $K$. Let $X'$ be a smooth proper rigid space over $K'$ with lift $\mathbb X'$ to $\BdR^+(K')/\xi^2$. Let $f:X'\to X$ be a morphism of adic spaces over $K$ with lift $\widetilde{f}:\mathbb X'\to \mathbb X$ of $f$ to $\BdR^+(K)/\xi^2$. Let $B'$ be the maximal $\O_{X'}$-torsionfree quotient of $f^\ast B$ and $\B':=\nu_{X'}^\ast B'$. Then we can associate to $\tilde{f}$ an isomorphism \[\psi_{\tilde{f}}:f^{\ast}\mathcal L_B\otimes_{f^\ast \B}\B'\isomarrow \mathcal L_{B'}\] in a way that is compatible with compositions and commutes with the $\psi_\phi$ from (2).
			\item Given two morphisms of $\O_X$-coherent $\O_X$-torsionfree $T_X$-algebras $\phi_i:B\to B_i$ for $i=1,2$, let $B_3$ be the maximal $\O_X$-torsionfree quotient of $B_1\otimes_B B_2$, endowed with the $T_X$-algebra structure for which $\tau_{B_3}=\tau_{B_1}\otimes 1+1\otimes \tau_{B_2}$. Set $\B_i:=\nu^\ast B_i$ for $i=1,2,3$. Set $\mathcal L_{B_1}\boxtimes_{\B_3} \mathcal L_{B_2}:=(\mathcal L_{B_1}\otimes_{\B_1}\B_3)\otimes_{\B_3}(\mathcal B_3\otimes_{\mathcal B_2}\mathcal L_{B_2})$. Then there is an isomorphism \[\psi_{\phi_1,\phi_2}:\mathcal L_{B_1}\boxtimes_{\B_3} \mathcal L_{B_2} \isomarrow \mathcal L_{B_3}.\] This is compatible in triple products, and with the isomorphisms from (2) and (3).
		\end{enumerate}
	\end{Theorem}
	\begin{proof}
		Given $B$, we choose any $B$-rigidifying set $\mathbb M\subseteq \mathbb X(\BdR^+/\xi^2)$ in the sense of \Cref{d:rigidifying-set}.2. 
		This means that its reduction $M$ is a $B$-rigidifying multiset of $K$-points of $X$. We can always find such a set $\mathbb M$:  By \Cref{l:injective-into-fibre}, we can first find a $B$-rigidifying set $M$ and then lift this to $\mathbb X$ using that $\mathbb X\to \Spa(\BdR^+/\xi^2)$ is smooth. 
		
		By \Cref{t:rigidifed-PicB}.2, we then have a short exact sequence of rigid groups over $K$
		\[0\to Q^M\to \mathcal P^M_{\mathbb X}\to \mathcal P_{\mathbb X}\to 0.\]
		The multiplication map $[p]$ is surjective on $Q^M$ by \Cref{l:QM} and also on the identity component of $\mathcal P_{\mathbb X}$ by \Cref{t:red-structure-group}. It follows that $[p]$ is surjective on the identity component of $\mathcal P_{\mathbb X}^M$. We can therefore apply \Cref{t:rigid-group-exp} which combines with \Cref{l:Lie-of-PMX} to give an exponential map
		\[ \Exp:L\mathcal P^M_{\mathbb X}(K)=\Lie \mathcal P^M_{\mathbb X}\to \mathcal P^M_{\mathbb X}(K).\]
		Finally, \Cref{p:lift-induces-elt-in-LiePM} and \Cref{d:can-section-tau} say that $\mathbb M$ induces a canonical element 
		\[  s_{\mathbb X,\mathbb M}(\tau_B)\in L\mathcal P^M_{\mathbb X}(K).\]
		We can thus associate to $\mathbb M$ the isomorphism class 
		\[\Exp( s_{\mathbb X,\mathbb M}(\tau_B))\in \mathcal P^M_{\mathbb X}(K)\] of an $M$-rigidified invertible $\B$-module on $X_\proet$. By \Cref{l:isom-between-rigidified-modules-unique}, this class uniquely determines an $M$-rigidified invertible $\B$-module $\mathcal L^{\mathbb M}_{B}=(L^{\mathbb M}_{B},(\alpha_{B,x}^{\mathbb M})_{x\in M})$ up to unique isomorphism.
		
		We claim that $L^{\mathbb M}_B$ is independent of the choice of $\mathbb M$: To see this, let $\mathbb M'\subseteq \mathbb X(\BdR^+/\xi^2)$ be any other choice of a $B$-rigidifying set. Comparing both of them to $\mathbb M\cup \mathbb M'$, we see that it suffices to treat the case that $\mathbb M\subseteq \mathbb M'$. As an aside, we note that this is the point where the reduction $\mathbb M\cup \mathbb M'$ might become a multiset, because it is important to allow the case that $\mathbb M$ and $\mathbb M'$ contain different lifts of the same point in $X(K)$.

		By \Cref{l:Lie-of-PMX}.2, we then have forgetful maps which by functoriality in \Cref{t:rigid-group-exp} fit into a commutative diagram 
		\[\begin{tikzcd}
			\mathcal P_{\mathbb X}^{\mathbb M'}(K) \arrow[r] & \mathcal P_{\mathbb X}^{\mathbb M}(K) \\
			L\mathcal P_{\mathbb X}^{\mathbb M'}(K) \arrow[u, "\Exp"] \arrow[r] & L\mathcal P_{\mathbb X}^{\mathbb M}(K). \arrow[u, "\Exp"]
		\end{tikzcd}\]
		Using again \Cref{l:isom-between-rigidified-modules-unique}, it follows that there is a unique isomorphism \[L^{\mathbb M}_{B}\isomarrow L^{\mathbb M'}_{B}\] of $\B$-modules that identifies $\alpha_{B,x}^{\mathbb M}$ with $\alpha_{B,x}^{\mathbb M'}$ for each $x\in \mathbb M$. This proves independence of $\mathbb M$: Explicitly, we can consider the filtered direct system of $B$-rigidifying sets $\mathbb M\subseteq \mathbb X(\BdR^+/\xi^2)$ and define
		\[\mathcal L_B:=\textstyle\varinjlim_{\mathbb M} L^{\mathbb M}_{B}.\]
		
		Part 2, the functoriality in $B$, is similar: We choose $\mathbb M$ large enough such that $M$ is rigidifying for both $B$ and $B'$. It is clear that pushout along $\phi$ defines a homomorphism
		\begin{equation}\label{eq:pushout-PicMB}
		q_{B,B'}:\uPic^M_{\B,\proet}\to \uPic^M_{\B',\proet}.
		\end{equation}
		By functoriality of $e_{\mathbb X}\circ \HTlog$
		in $B$, this restricts to a homomorphism
		\[ \mathcal P^{\mathbb M}_{\mathbb X,\B}\to \mathcal P^{\mathbb M}_{\mathbb X,\B'}.\]
		By functoriality in \Cref{l:Lie-of-PMX}, this commutes with the exponentials, exactly like above. This shows that the $\mathbb M$-rigidified $\B'$-modules $(L^{\mathbb M}_{B}\otimes \B',(\alpha_{B,x}^{\mathbb M}\otimes \B')_{x\in M})$ and $(L^{\mathbb M}_{B'},(\alpha_{B',x}^{\mathbb M})_{x\in M})$ are isomorphic. By \Cref{l:isom-between-rigidified-modules-unique}, there is  a unique isomorphism $\psi_{\phi}:L_B^{\mathbb M}\otimes_{\B}{\B'}\to L^{\mathbb M}_{B'}$ that commutes with rigidifications. The uniqueness also yields compatibility with compositions.
		
		The functoriality in $(X,\mathbb X)$ and $(K,\Exp)$ is similar: Assume first that $K=K'$. Choose rigidifying sets $\mathbb M'\subseteq \mathbb X'(\BdR^+/\xi^2)$ for $\B'$ and $\mathbb M\subseteq \mathbb X(\BdR^+/\xi^2)$ for $\B$. We may assume that $\tilde f(\mathbb M')\subseteq \mathbb M$ after adding $\tilde f(\mathbb M')$ to $\mathbb M$. Then pullback along $f$ induces a pullback morphism
		\[\uPic^{\mathbb M}_{\B,\proet}\to \uPic^{\mathbb M'}_{\B',\proet}\]
		which once again restricts to a homomorphism
		$\mathcal P^{\mathbb M}_{\mathbb X,\B}\to \mathcal P^{\mathbb M'}_{\mathbb X',\B'}$.
		We can now argue as in part~2 to obtain by functoriality in \Cref{l:Lie-of-PMX} the desired morphism $\psi_{\tilde f}$, using additionally the functoriality in $(X,\mathbb X)$ in \Cref{p:lift-induces-elt-in-LiePM}.
		
	Compatibility with composition and with part 2 follows from the uniqueness in \Cref{l:isom-between-rigidified-modules-unique}. 
		
		For part 3, it remains to treat the base-change $f:X_{K'}\to X$: We first note that the base-change of $\mathbb X$ along $\BdR^+(K)/\xi^2\to \BdR^+(K')/\xi^2$  defines a lift $\mathbb X'$ of $X_{K'}$. Then the pullback $\mathbb M'$ of any $B$-rigidifying set $\mathbb M\subseteq \mathbb X(\BdR^+/\xi^2)$ is $B'$-rigidifying, where $B':=f^{\ast}B$. We thus obtain a pullback map $\mathcal P^{\mathbb M}_{\mathbb X,\B}\to \mathcal P^{\mathbb M'}_{\mathbb X',\B'}$ and from here the construction goes exactly as before.
		
		For part 4, let $\mathbb M$ be large enough such that it is rigidifying for $B_1$, $B_2$ and $B_3$. It is straightforward to see that $-\boxtimes_{\B_3}-$ defines a natural morphism
		\[ \uPic^{\mathbb M}_{\B_1,\proet}\times \uPic^{\mathbb M}_{\B_2,\proet}\to \uPic^{\mathbb M}_{\B_3,\proet},\quad ( L_1,(\alpha_{x})),( L_2,(\beta_{x}))\mapsto ( L_1\boxtimes_{\B_3} L_2,(\alpha_{x}\boxtimes \beta_{x})).\]	
		Explicitly, in terms of the pushout maps  \Cref{eq:pushout-PicMB} this can be described as the product of $q_{B_1,B_3}$ and $q_{B_2,B_3}$. It follows from this and the fact that $\HTlog$ is a homomorphism that  we have $\HTlog(L_1\boxtimes_{\B_3}L_2)=\HTlog(L_1)\otimes 1+1\otimes \HTlog(L_2)$. Since we also have $\tau_{B_3}=\tau_{B_1}\otimes 1+1\otimes \tau_{B_2}$ by construction, it follows that the above restricts to a natural morphism
		\[\cdot:\mathcal P^{\mathbb M}_{\mathbb X,\B_1}\times \mathcal P^{\mathbb M}_{\mathbb X,\B_2}\to \mathcal P^{\mathbb M}_{\mathbb X,\B_3}. \]
		By the description of $-\boxtimes_{\B_3}-$ as $q_{B_1,B_3} \cdot q_{B_2,B_3}$, its derivative has to be the map
		\[ 	L\mathcal P_{\mathbb X,\B_1}^{\mathbb M}\times 	L\mathcal P_{\mathbb X,\B_2}^{\mathbb M}\to 	L\mathcal P_{\mathbb X,\B_3}^{\mathbb M} \]
		given by the sum of the natural pushout maps. As $\Exp$ is a homomorphism, it follows that
		\[\Exp( s_{\mathbb X,\mathbb M}(\tau_{B_1}))\cdot  \Exp( s_{\mathbb X,\mathbb M}(\tau_{B_2}))=\Exp( s_{\mathbb X,\mathbb M}(\tau_{B_1}\otimes 1+1\otimes \tau_{B_2}))=\Exp( s_{\mathbb X,\mathbb M}(\tau_{B_3})).\]
		
		From here, the argument works exactly like before.
	\end{proof}
	\begin{Remark} We note that by construction, each $\mathcal B$-module $\mathcal L_B$ in \Cref{t:inv-B-module-assoc-to-HF}  comes equipped with an isomorphism
	$\alpha_{\tilde{x}}:(\mathcal L_B)_{|x}\isomarrow \mathcal B_{|x}$
	for every $x\in X(K)$ and every lift $\tilde{x}\in \mathbb X(\BdR^+/\xi^2)$. We could thus introduce a notion of ``fully rigidified $\B$-modules'' to characterise $\mathcal L_B$ uniquely up to unique isomorphism. This is why we use the notation $\mathcal L_B$  reserved for rigidifed modules rather than $L_B$. But we will not need this in the following.
	\end{Remark}
	\begin{Remark}\label{r:Simpson-gerbe}
		In upcoming work in progress, Bhatt--Zhang will show that one can reinterpret  \Cref{t:inv-B-module-assoc-to-HF} more geometrically as constructing a natural  local splitting of a certain rigid analytic gerbe over the cotangent space of $X$, which they call the Simpson gerbe.
	\end{Remark}
	\section{Local considerations on the $p$-adic Simpson correspondence via twisting}
	The final ingredient for our construction of the non-abelian Hodge correspondence are local considerations of how to pass between Higgs bundles and pro-\'etale vector bundles via twisting with invertible $\B$-modules. This is based on the following general construction:
		\begin{Definition}\label{d:wtTX-action}
		Let $(E,\theta_E)$ be a Higgs bundle on $X$. The Higgs field $\theta_E:E\to E\otimes \wtOm_X$ dualises to a morphism $\wtOm^{\vee}_X\to \uEnd(E)$ sending $\partial \mapsto (E\xrightarrow{\theta}E\otimes \wtOm_X\xrightarrow{\id\otimes \partial}E)$. Due to the Higgs field condition $\theta\wedge \theta=0$, this extends to an $\O_X$-algebra morphism on $X_{\et}$
		\[ T_X:=\Sym^\bullet_{\O_X} \wtOm_X^{\vee}\to \uEnd(E).\]
		Let $B_{\theta}$ be the image of this morphism, this is a commutative subalgebra of $\uEnd(E$). Since $\uEnd(E)$ is coherent as an $\O_X$-module, so is its submodule $B_\theta$. There is a canonical section \[\tau_{\theta}\in H^0(X,B_{\theta}\otimes \wtOm_X)\]
		defined as the image of $\id\in \wtOm^\vee_X\otimes \wtOm_X\to B_{\theta}\otimes \wtOm_X$, uniquely determined by the property that
		\begin{equation}\label{eq:tau-goes-to-theta}
		H^0(X,B_{\theta}\otimes \wtOm_X)\hookrightarrow H^0(X,\uEnd(E)\otimes \wtOm_X) \quad \text{sends}\quad \tau_{\theta}\mapsto\theta_E.
		\end{equation}
			We shall also denote $B_\theta$ just by $B$ and $\tau_\theta$ by $\tau_B$ when $\theta$ is clear from context.
	\end{Definition}
	Invoking \Cref{t:inv-B-module-assoc-to-HF},
	the idea for constructing the $p$-adic Simpson functor for proper $X$
	\[ \{\text{Higgs bundles on }X\}\isomarrow	\{\text{pro-\'etale vector bundles on $X$}\}\]
	will now be to send any Higgs bundle $(E,\theta)$ on $X$ to the pro-\'etale vector bundle $\nu^\ast E\otimes_{\B_\theta}\mathcal L_{B_\theta}$ where $\mathcal L_{B_\theta}$ is the invertible $\B_\theta:=\nu^{\ast}B_\theta$-module from \Cref{t:inv-B-module-assoc-to-HF} with $\HTlog(\mathcal L_{B_\theta})=\tau_{\theta}$. 
	\begin{Remark}\label{r:enlarge-B}
	Note that by \Cref{t:inv-B-module-assoc-to-HF}.2, we are free to enlarge $B=B_\tau$, as follows: If $T_X\to B'\to B$ is any $\O_X$-coherent sub-quotient of $T_X$, then $B'$ acts on $E$ via $B'\to B$, and
	\begin{equation} \nu^\ast E\otimes_{\B}\mathcal L_{B}=\nu^\ast E\otimes_{\B}\B\otimes_{\B'}\mathcal L_{B'}= \nu^\ast E\otimes_{\B'}\mathcal L_{B'}.
	\end{equation}
	\end{Remark}
	In order to be able to go into the other direction, we need some preparations on pro-\'etale vector bundles: For this we begin by recalling the local correspondence. As we will explain, one can reinterpret this in terms of twisting with pro-\'etale invertible $\B$-modules.
	
	 In contrast to Faltings' construction, we do not actually rely on the local correspondence for the \textit{construction} of $\mathrm S_{\mathbb X,\Exp}$, but we will use it to see that $\mathrm S_{\mathbb X,\Exp}$ is an equivalence.
	\subsection{The Local correspondence}
	\begin{Definition}\label{d:rho}
		We call an affinoid rigid space $U$ toric if there is an \'etale map $f:U\to \mathbb T^d$ to the torus over $K$ which is a composition of rational localisations and finite \'etale maps. We call $f$ a toric chart. Given a chart $f$, consider the affinoid perfectoid torus $\mathbb T^d_\infty\to \mathbb T$, a pro-\'etale Galois torsor under the group $\Delta:=\Z_p(1)^d$. We denote by $\wt U\to U$ the pullback along $f$. The chart $f$ induces parameters $T_1,\dots,T_d\in \O(U)^\times$ on $U$ which define a basis $\frac{dT_1}{T_1},\dots,\frac{dT_d}{T_d}$ of $\Omega_U$. We denote by $\partial_1,\dots,\partial_d$ the dual basis of $\Omega_U^\vee$.
		 Then $f$ induces an isomorphism
		\[\rho_f:H^0(U,\wtOm_U)\isomarrow \Hom_{\Z_p}(\Delta,\O(U)).\] which can be characterised as follows: 
		Its dual $\Z_p^d(1)\to \Omega_U^\vee(1)(U)$ is the $(1)$-twist of the map that sends the standard basis vector $\gamma_i$ of $\Z_p^d$ to $\partial_i$. For any coherent $\O_U$-module $B$, we will denote by $\rho_{f,B}$ or just by $\rho_f$ the induced isomorphism obtained by tensoring with $B(U)$.
	\end{Definition}
	
	There is for any smooth rigid space $U$ an intrinsic notion of ``smallness'' for both pro-\'etale vector bundles and Higgs bundles. As we will not need the technical details, we just refer to \cite[\S6]{heuer-sheafified-paCS} for the definition. What will be important for us is only the following:
	\begin{itemize}
		\item  Any pro-\'etale vector bundle or Higgs bundle on $U$ becomes small on an \'etale cover.
		\item For toric $U$, we have the following equivalence, the ``Local correspondence'':
	\end{itemize}
	\begin{Theorem}[{\cite[\S3]{Faltings_SimpsonI}\cite[Thm.~6.5]{heuer-sheafified-paCS}}]\label{t:local-corresp}
		Let $U$ be a toric smooth rigid space and let $f:U\to \mathbb T^d$ be a toric chart. Then $f$ induces an exact equivalence of categories
		\[\LS_f:\{\text{small pro-\'etale vector bundles on $U$}\}\isomarrow \{\text{small Higgs bundles on $U$}\}.\]
		It sends $(E,\theta)$ to the unique pro-\'etale vector bundle $V$ for which $V(\wt U)$ is the $\Delta$-module
		\[V(\wt U):= \O(\wt U)\otimes_{\O(U)}E(U), \quad \gamma \cdot (a\otimes x)=\gamma(a)\otimes  \exp(\rho_f(\theta)(\gamma))x\]
		where $\rho_f:=\rho_{f,\End(E)}\colon H^0(U,\wtOm_U\otimes \uEnd(E))\isomarrow \Hom(\Delta,\End(E))$ is the map from \Cref{d:rho}. In particular, we have a natural isomorphism of $\O(\wt U)$-modules
		$E(\wt U)=V(\wt U)$.
	\end{Theorem}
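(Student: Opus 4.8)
The plan is to treat \Cref{t:local-corresp} as a decompletion statement of Sen-theoretic type, essentially as in Faltings \cite[\S3]{Faltings_SimpsonI} and in the rigid-analytic formulation of \cite[Thm.~6.5]{heuer-sheafified-paCS}. The chart $f$ enters in two ways: it pins down the pro-\'etale Galois cover $\wt U\to U$ with group $\Delta=\Z_p(1)^d$, and via the basis $\partial_1,\dots,\partial_d$ it furnishes the identification $\rho_f$ of \Cref{d:rho}. First I would recall that, since $\wt U$ is affinoid perfectoid and $\wt U\to U$ is a pro-\'etale $\Delta$-torsor, pro-\'etale descent identifies the category of pro-\'etale vector bundles on $U$ that become free on $\wt U$ with the category of finite projective $\O(\wt U)$-modules equipped with a continuous semilinear $\Delta$-action; by the definition of smallness in \cite[\S6]{heuer-sheafified-paCS}, every small pro-\'etale vector bundle lies in this subcategory, and smallness amounts to the semilinear action being suitably close to the pullback action in an integral sense.

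To build $\LS_f$, I would send a small pro-\'etale vector bundle $V$ to the small semilinear $\Delta$-module $M:=V(\wt U)$ and then invoke the key decompletion input: $M$ contains a unique finite projective $\O(U)$-submodule $E(U)$ with $E(U)\otimes_{\O(U)}\O(\wt U)=M$ on which the standard topological generators $\gamma_i$ of $\Delta$ (those from \Cref{d:rho}) act through $\exp$ of commuting small $\O(U)$-linear endomorphisms $N_1,\dots,N_d$ of $E(U)$. The $N_i$ commute because the $\gamma_i$ commute and $\log$ is a homomorphism on commuting topologically unipotent elements --- this is the one point where the abelianness of $\Delta$ is used. Setting $E$ to be the analytic vector bundle with sections $E(U)$ and $\theta:=\rho_f^{-1}(\gamma_i\mapsto N_i)\in H^0(U,\wtOm_U\otimes\uEnd(E))$, the relation $[N_i,N_j]=0$ is precisely $\theta\wedge\theta=0$, so $(E,\theta)$ is a small Higgs bundle with $E(\wt U)=M=V(\wt U)$, as required.

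For the quasi-inverse I would start from a small Higgs bundle $(E,\theta)$, put $N_i:=\rho_f(\theta)(\gamma_i)\in\End(E)(U)$ --- so $\theta\wedge\theta=0$ translates into $[N_i,N_j]=0$ --- and observe that smallness makes the series $\exp(N_i)$ converge, so the displayed formula defines a continuous semilinear $\Delta$-action on $\O(\wt U)\otimes_{\O(U)}E(U)$; the cocycle identity and continuity hold since $\rho_f(\theta)$ is $\Z_p$-linear with values in a commuting family of small endomorphisms. Pro-\'etale descent along $\wt U\to U$ then produces a unique, automatically small, pro-\'etale vector bundle $V$ with these $\wt U$-sections. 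That the two functors are mutually inverse is then formal, using that $\exp$ and $\log$ are mutually inverse on small commuting endomorphisms together with the uniqueness of the decompletion $E(U)\subseteq M$; exactness in both directions follows because restriction to $\wt U$, base change $\O(\wt U)\otimes_{\O(U)}-$, and passage to the decompletion module are exact, while tensor compatibility is checked on $\wt U$-sections via $\exp(N\otimes 1+1\otimes N')=\exp(N)\otimes\exp(N')$.

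I expect the only genuine difficulty to be the decompletion step itself: the existence and uniqueness of the analytic $\O(U)$-lattice $E(U)\subseteq M$ with commuting logarithms. This is not formal and requires a Tate--Sen-type successive-approximation argument, using a $\Delta$-stable decomposition of $\O(\wt U)$ (a decompletion of the structure sheaf) on which $\gamma_i-1$ is invertible up to a controlled error, to iteratively correct the semilinear data until it is defined over $\O(U)$; the convergence of this process is exactly what the smallness hypotheses are calibrated to guarantee. This is carried out in \cite[\S3]{Faltings_SimpsonI} and, in the generality of rigid-analytic $U$, in \cite[Thm.~6.5]{heuer-sheafified-paCS}, which I would cite for the details.
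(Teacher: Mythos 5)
The paper does not actually prove \Cref{t:local-corresp}; it imports it wholesale from \cite[\S3]{Faltings_SimpsonI} and \cite[Thm.~6.5]{heuer-sheafified-paCS}, so your outline --- descent along the $\Delta$-torsor $\wt U\to U$, translation between small semilinear actions and commuting small endomorphisms via $\exp$/$\log$, and a Tate--Sen decompletion for the quasi-inverse --- is precisely the argument of the cited sources, and you correctly isolate the decompletion of the $\O(U)$-lattice as the one non-formal step and defer it to the same references. This matches the paper's treatment, so there is nothing to correct.
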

	\begin{Corollary}\label{c:End(V)-coherent}
		Let $X$ be any smooth rigid space and let $V$ be a pro-\'etale vector bundle on $X$. Then the sheaf of $\O_X$-linear endomorphisms $\nu_{\ast}\uEnd(V)$ is a coherent module on $X_{\et}$.
	\end{Corollary}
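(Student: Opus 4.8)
The plan is to reduce the statement to the local correspondence. Coherence of $\nu_\ast\uEnd(V)$ is an \'etale-local statement on $X$, so I may assume that $X=U$ is a toric smooth rigid space with a chosen toric chart $f\colon U\to\mathbb T^d$, and—since any pro-\'etale vector bundle becomes small on an \'etale cover and coherence descends along \'etale covers—I may also assume $V$ is small. The key point is then that $\nu_\ast\uEnd(V)$ for a small $V$ can be computed from the local correspondence: under $\LS_f$, writing $(E,\theta)=\LS_f(V)$, we have a natural isomorphism of $\O(\wt U)$-modules $E(\wt U)=V(\wt U)$ compatible with the $\Delta$-actions as described in \Cref{t:local-corresp}.

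First I would unwind what $\nu_\ast\uEnd(V)$ is: for $W\to U$ \'etale, $(\nu_\ast\uEnd(V))(W)=\Hom_{\O_{W_\proet}}(V|_W,V|_W)$, i.e. pro-\'etale $\O$-linear endomorphisms of $V|_W$. Restricting the toric chart to $W$ and using that $V|_W$ is again small, it suffices to identify this Hom-group over a fixed toric $U$. Now an $\O_{U_\proet}$-linear endomorphism of $V$ is determined by its value on the pro-finite-\'etale cover $\wt U\to U$: it is a $\Delta$-equivariant $\O(\wt U)$-linear endomorphism of $V(\wt U)=\O(\wt U)\otimes_{\O(U)}E(U)$. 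By faithfully flat descent along $\O(U)\to\O(\wt U)$ (completed), such an endomorphism is the same as an element $\psi\in\End_{\O(U)}(E(U))\hotimes_{\O(U)}\O(\wt U)=\uEnd(E)(\wt U)$ that commutes with the twisted $\Delta$-action. Using the explicit formula $\gamma\cdot(a\otimes x)=\gamma(a)\otimes\exp(\rho_f(\theta)(\gamma))x$, the $\Delta$-equivariance condition becomes: $\psi$ is fixed by the $\Delta$-action on $\uEnd(E)(\wt U)$ twisted by conjugation by $\exp(\rho_f(\theta)(\gamma))$. Hence $\nu_\ast\uEnd(V)(U)$ is the $\Delta$-invariants of $\uEnd(E)\hotimes\O(\wt U)$ for this twisted conjugation action.

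The last step is to recognize that this $\Delta$-invariant submodule is exactly $H^0$ of a small Higgs bundle built from $\uEnd(E)$, and hence coherent. Concretely, the conjugation action of $\theta$ makes $(\uEnd(E),\operatorname{ad}\theta)$ into a small Higgs bundle on $U$ (smallness of $\theta$ passes to $\operatorname{ad}\theta$ since the structure constants only get worse by a bounded amount, and $\operatorname{ad}\theta\wedge\operatorname{ad}\theta=0$ follows from $\theta\wedge\theta=0$ by the Jacobi-type identity). Applying the inverse local correspondence $\LS_f^{-1}$, or rather running the same $\wt U$-level description in reverse, the $\Delta$-invariants computed above are precisely the kernel of $\operatorname{ad}\theta$ on $\uEnd(E)$, i.e. $\nu_\ast\uEnd(V)=\ker\!\big(\operatorname{ad}\theta\colon\uEnd(E)\to\uEnd(E)\otimes\wtOm_U\big)$ as sheaves on $U_\et$. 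This is the kernel of an $\O_U$-linear map between coherent $\O_U$-modules on the (locally) Noetherian adic space $U$, hence coherent. Globalizing, $\nu_\ast\uEnd(V)$ is coherent on $X_\et$.

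The main obstacle I anticipate is the bookkeeping in the middle step: carefully justifying that a pro-\'etale $\O$-linear endomorphism of $V$ descends, via the completed faithfully flat descent along $\O(U)\to\O(\wt U)$, to an honest element of the coherent sheaf $\uEnd(E)$ rather than merely something in a completed tensor product, and that the $\Delta$-invariance condition matches the vanishing of $\operatorname{ad}\theta$ exactly on the nose (including dealing with the two cases $p>2$ and $p=2$ in the domain of $\exp$, and with the fact that smallness is needed precisely so that $\exp$ of the relevant endomorphisms converges). Once that identification $\nu_\ast\uEnd(V)=\ker(\operatorname{ad}\theta)$ is in hand, coherence is immediate; but getting there cleanly requires being careful that all the tensor products and invariants commute with the relevant limits, which is exactly the kind of argument the paper has been setting up in \Cref{l:approx-prop-for-B^+/p} and \Cref{p:RnuB^*}.
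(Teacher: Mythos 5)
Your proposal is correct and follows essentially the same route as the paper: localize to a toric $U$ with small $V$, apply the local correspondence, and identify $\nu_\ast\uEnd(V)$ with $\uEnd(E,\theta)=\ker\big(\operatorname{ad}\theta\colon\uEnd(E)\to\uEnd(E)\otimes\wtOm_U\big)$, which is coherent as the kernel of a map of coherent sheaves. The only difference is that you re-derive the identification by explicit Galois descent along $\wt U\to U$ and a $\Delta$-invariants computation, whereas the paper simply invokes that $\LS_f$ is an equivalence of categories (so full faithfulness, applied over each \'etale $W\to U$ with the restricted chart, gives the isomorphism of endomorphism sheaves directly).
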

	\begin{proof}
		The statement is local, so we may assume that $X$ is toric and $V$ is small. Then by \Cref{t:local-corresp}, $\uEnd(V)\cong \uEnd(E,\theta)$ for some Higgs bundle $(E,\theta)$, and this is coherent.
	\end{proof}
	
	Let now $B$ be any coherent $\O_X$-algebra and $\mathcal B:=\nu^{\ast}B$. Choose $B^+\subseteq B$ as in the discussion after \Cref{l:coherent-integral-sumodule}. Then we also have the following variant which is essentially a weak version of a local correspondence for invertible $\B$-modules that will be enough for our purposes:
	
	\begin{Lemma}\label{l:tech-lemma-descr-Ltau}
		Let $\mathcal L$ be an invertible $\B$-module on $X_{\proet}$. Set $\tau:=\HTlog(\mathcal L)\in H^0(X,\wtOm\otimes B)$. Then there is an \'etale cover of $X$ by toric rigid spaces $U\to X$ with charts $f:U\to \mathbb T^d$ satisfying the following: Let $\wt U\to U$ be the toric $\Delta$-torsor induced by $f$. The restriction $\rho_f(\tau|_U):\Delta\to B(U)$ has image in $2pB^+(U)$ and $\mathcal L$ is isomorphic to the invertible $\B$-module $\mathcal L_{\tau|_U,f}$ on $U_\proet$ defined via descent along $\wt U\to U$ of $\B_{|\wt U}$ endowed with the following $\Delta$-action:
		\[\B(\wt U)=\O(\wt U)\otimes_{\O(U)} B(U),\quad \gamma \cdot (a\otimes x)=\gamma(a)\otimes \exp(\rho_{f,B}(\tau|_U)(\gamma)).\]
	\end{Lemma}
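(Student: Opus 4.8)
The plan is to reduce, via the Leray sequence of \Cref{c:Leray-seq-for-B^x}, to the construction of a single explicit invertible $\B$-module with the prescribed Hodge--Tate invariant, and then to build that module by descent along the toric cover. First note that the assertion is local on $X_\et$ and that toric affinoids form a basis of $X_\et$; so we may fix a toric affinoid $U_0\to X$ with a chart $f_0\colon U_0\to\mathbb T^d$ and look for a toric cover $U\to U_0$ with the stated properties. By \Cref{c:Leray-seq-for-B^x} the sequence
\[0\to H^1_\et(U,B^\times)\to H^1_\proet(U,\B^\times)\xrightarrow{\ \HTlog\ }H^0(U,\wtOm_U\otimes B)\]
is exact for every smooth $U$. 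Hence if, over a suitable toric $U$, we exhibit \emph{some} invertible $\B$-module $\mathcal L'$ on $U_\proet$ with $\HTlog(\mathcal L')=\tau|_U$, then $[\mathcal L|_U]\cdot[\mathcal L']^{-1}$ lies in the image of $H^1_\et(U,B^\times)$, which is a $B^\times$-torsor on $U_\et$ and therefore trivialised by a further \'etale cover $U'\to U$; over $U'$ we then get $\mathcal L|_{U'}\cong\mathcal L'|_{U'}$. So it suffices to construct $\mathcal L_{\tau|_U,f}$ as in the statement, to observe that it is an invertible $\B$-module (it becomes free of rank $1$ on the pro-\'etale cover $\wt U\to U$), to check that $\HTlog(\mathcal L_{\tau|_U,f})=\tau|_U$, and to note that its formation is compatible with further \'etale base change $U'\to U$ (clear, since $\wt U\to U$, the map $\rho_f$ of \Cref{d:rho} and the descent datum all pull back, and $B^+(U)\hookrightarrow B^+(U')$).

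Next I would arrange that $\rho_f(\tau)$ is small enough for the exponential of \Cref{l:approx-prop-for-B^+/p}.(3) to converge. On $U_0$ write $\tau|_{U_0}=\sum_i\tfrac{dT_i}{T_i}\otimes b_i$ with $b_i\in B(U_0)$, and choose $N$ with $p^N b_i\in B^+(U_0)$ for all $i$. Pulling $f_0$ back along the finite \'etale $p^{N+1}$-power map $[p^{N+1}]\colon\mathbb T^d\to\mathbb T^d$ gives $U:=U_0\times_{\mathbb T^d,[p^{N+1}]}\mathbb T^d\to U_0$, which is finite \'etale, hence still a toric affinoid over $X$ (a base change of a composition of rational localisations and finite \'etale maps is again of that form), equipped with a chart $f\colon U\to\mathbb T^d$ whose coordinates $S_i$ satisfy $S_i^{p^{N+1}}=T_i$. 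Then $\tfrac{dT_i}{T_i}=p^{N+1}\tfrac{dS_i}{S_i}$, so by the normalisation of $\rho_f$ in \Cref{d:rho} we obtain $\rho_{f,B}(\tau|_U)(\gamma_i)=p^{N+1}b_i|_U\in pB^+(U)$ (and if $p=2$ one simply takes $N$ large enough to land in $4B^+(U)$).

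Over this $U$ the formula of the statement defines a genuine $\Delta$-semilinear action on $\B(\wt U)=\O(\wt U)\otimes_{\O(U)}B(U)$: since $\Delta$ and $B(U)^\times$ are abelian, $\rho_{f,B}(\tau|_U)\colon\Delta\to pB^+(U)$ is a continuous homomorphism and $\exp\colon pB^+\to 1+pB^+$ is a group isomorphism by \Cref{l:approx-prop-for-B^+/p}.(3), the map $\gamma\mapsto\exp(\rho_{f,B}(\tau|_U)(\gamma))$ is a continuous $1$-cocycle with values in $\B^\times(\wt U)$, and descent along the pro-\'etale $\Delta$-torsor $\wt U\to U$ produces an invertible $\B$-module $\mathcal L_{\tau|_U,f}$ whose class in $H^1_\proet(U,\B^\times)$ is represented by that cocycle. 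Applying $\log\colon 1+p\B^+\to p\B^+\hookrightarrow\B$ turns it into the $1$-cocycle $\gamma\mapsto\rho_{f,B}(\tau|_U)(\gamma)$; unwinding the description of $\HTlog$ through the $\exp/\log$-comparison $R^1\nu_{\ast}\B^\times\cong R^1\nu_{\ast}\B$ of \Cref{p:RnuB^*}.(iii) together with the Leray edge map $\HT$ for $\B$, and the toric computation of $\HT$ on torsor cocycles (as in \Cref{l:explicit-description} and the proof of \Cref{p:splitting-induced-by-HT-torsor}, matched against the defining normalisation of $\rho_f$ in \Cref{d:rho}), this cocycle is sent precisely to $\tau|_U$. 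Hence $\HTlog(\mathcal L_{\tau|_U,f})=\tau|_U$, and the reduction of the first paragraph finishes the proof.

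The step I expect to require the most care is the identity $\HTlog(\mathcal L_{\tau|_U,f})=\tau|_U$: it amounts to unravelling the Leray edge map for $\B^\times$ through the $\exp/\log$ comparison of \Cref{p:RnuB^*} at the level of explicit $\Delta$-cocycles for $\wt U\to U$, and checking that the normalisations there match those fixed in \Cref{d:rho} and \Cref{l:explicit-description}; this is the $\B$-linear analogue of Scholze's computation \cite[Lemma~3.24]{Scholze2012Survey} in the structure-sheaf case. The only other point to watch is that the ``make $\tau$ small'' step stays inside the class of toric charts, which is exactly what the finite \'etale $p$-power Kummer cover in the second paragraph guarantees.
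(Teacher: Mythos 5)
Your proposal follows the same route as the paper's proof: reduce via the left-exact Leray sequence of \Cref{c:Leray-seq-for-B^x} to producing one invertible $\B$-module with the prescribed $\HTlog$, shrink $\rho_f(\tau)$ into $pB^+(U)$ by a finite \'etale Kummer cover (the paper phrases this as passing to an open subgroup of $\Delta$), and check $\HTlog(\mathcal L_{\tau|_U,f})=\tau|_U$ via the $\exp/\log$ comparison and the toric cocycle computation of \Cref{l:explicit-description}. The paper compresses the last two steps into ``by construction,'' so your write-up is simply a more explicit rendering of the same argument and is correct.
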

	\begin{proof}
		Let $U\to X$ be any \'etale map from a toric rigid space.
		By \Cref{c:Leray-seq-for-B^x}.2, we have a left-exact sequence
		\[0\to  H^1_{\et}(U,B^\times)\to  H^1_{\proet}(U,\B^\times)\xrightarrow{\HTlog_U} H^0(U,\wtOm_U\otimes B)\]
		It follows that any invertible $\B$-module $ \mathcal L'$ on $U_\proet$ with $\HTlog_U( \mathcal L')=\HTlog_U(\mathcal L)=\tau$ becomes isomorphic to $\mathcal L$ after passing to an \'etale cover.
		After passing to an open subgroup of $\Delta$ by replacing $U$ by a finite \'etale cover, we can ensure that $\rho_{f,B}(\tau)$ has image in $2pB^+(U)$ where $\exp$ is defined. Then the $\Delta$-module $\mathcal L_{\tau|_U,f}$ defined in the lemma satisfies $\HTlog(\mathcal L_{\tau|_U,f})=\tau$ by construction, hence $\mathcal L_{\tau|_U,f}\cong \mathcal L$  after passing to a further \'etale cover.
	\end{proof}
	We can use this to reinterpret the local correspondence in terms of twisting:
	\begin{Proposition}\label{l:LS-is-twisting}
		In the setting of \Cref{t:local-corresp}, let $(E,\theta)$ be any Higgs bundle. Denote by $B\subseteq \uEnd(E)$ the coherent $\O_X$-module of \Cref {d:wtTX-action} with section $\tau_\theta\in H^0(X,\wtOm\otimes B)$.
		Then over an \'etale cover of $U$, we have a natural isomorphism $\LS_{f}^{-1}(E,\theta)\isomarrow \nu^{\ast}E\otimes_{\mathcal B}\mathcal L_{\tau_\theta,f}$.
	\end{Proposition}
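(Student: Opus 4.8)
The plan is to compare the two pro-\'etale vector bundles on $U$ by writing each of them as the descent along the toric $\Delta$-torsor $\wt U\to U$ of an explicit $\Delta$-module over $\O(\wt U)$, and then to check that the two resulting $\Delta$-actions coincide. First I would pass to an \'etale cover of $U$ on which $E$ is free, on which the local correspondence of \Cref{t:local-corresp} applies, and on which $\rho_{f,B}(\tau_\theta)\colon\Delta\to B(U)$ has image in $pB^+(U)$ --- the last condition can be arranged by replacing $U$ by a finite \'etale cover as in the proof of \Cref{l:tech-lemma-descr-Ltau} --- and then work over a common such refinement. Over it, $\mathcal L_{\tau_\theta,f}$ is precisely the invertible $\B$-module given by the recipe recorded in \Cref{l:tech-lemma-descr-Ltau}, and all the exponentials below converge.

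Next I would spell out both sides on $\wt U$. By \Cref{t:local-corresp}, $\LS_f^{-1}(E,\theta)(\wt U)=\O(\wt U)\otimes_{\O(U)}E(U)$ with $\Delta$-action $\gamma\cdot(a\otimes x)=\gamma(a)\otimes\exp(\rho_{f,\uEnd(E)}(\theta)(\gamma))x$. On the other side, $\nu^\ast E$ restricted to $\wt U$ is $\O(\wt U)\otimes_{\O(U)}E(U)$ equipped with the pulled-back, untwisted descent datum $\gamma\cdot(a\otimes x)=\gamma(a)\otimes x$, and it is a $\B$-module via the inclusion $B\hookrightarrow\uEnd(E)$ from \Cref{d:wtTX-action}; while $\mathcal L_{\tau_\theta,f}$ restricted to $\wt U$ is the free rank-one module $\B(\wt U)=\O(\wt U)\otimes_{\O(U)}B(U)$ with descent datum $\gamma\cdot(a\otimes b)=\gamma(a)\otimes\exp(\rho_{f,B}(\tau_\theta)(\gamma))\,b$. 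Tensoring over $\B(\wt U)$ and using the identification $M\otimes_{\B}\B\cong M$, I get that $(\nu^\ast E\otimes_{\B}\mathcal L_{\tau_\theta,f})(\wt U)$ is again $\O(\wt U)\otimes_{\O(U)}E(U)$, now carrying the $\Delta$-action $\gamma\cdot(a\otimes x)=\gamma(a)\otimes\bigl(\exp(\rho_{f,B}(\tau_\theta)(\gamma))\cdot x\bigr)$, where $\cdot$ denotes the $B(U)$-action on $E(U)$ coming from $B\subseteq\uEnd(E)$.

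It then remains to identify the two $\Delta$-actions, i.e.\ to see that for each $\gamma\in\Delta$ the operator $\exp(\rho_{f,B}(\tau_\theta)(\gamma))$ acting on $E(U)$ through $B(U)\hookrightarrow\End_{\O(U)}(E(U))$ equals $\exp(\rho_{f,\uEnd(E)}(\theta)(\gamma))$. This I would deduce from three compatibilities: $\rho_{f,B}$ and $\rho_{f,\uEnd(E)}$ are both obtained from $\rho_f$ by base change (\Cref{d:rho}), hence intertwined by the map induced by $B\hookrightarrow\uEnd(E)$; the section $\tau_\theta$ maps to $\theta$ under $H^0(U,\wtOm\otimes B)\hookrightarrow H^0(U,\wtOm\otimes\uEnd(E))$ by \Cref{eq:tau-goes-to-theta}; and the bounded $\O(U)$-algebra homomorphism $B(U)\to\End_{\O(U)}(E(U))$ commutes with $\exp$ on $pB^+(U)$, since a ring homomorphism commutes with any power series that converges on both sides. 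Putting these together, $\rho_{f,B}(\tau_\theta)(\gamma)$ maps to $\rho_{f,\uEnd(E)}(\theta)(\gamma)$, so the two operators agree, the identity map on $\O(\wt U)\otimes_{\O(U)}E(U)$ becomes $\Delta$-equivariant, and it descends to the asserted isomorphism on $U_{\proet}$, which is natural in $(E,\theta)$ because every identification used is. The step I expect to be the main obstacle is the bookkeeping in the second paragraph --- correctly tracking the $B(U)$-action through the tensor product $\nu^\ast E\otimes_{\B}\mathcal L_{\tau_\theta,f}$ and pinning down the untwisted descent datum on $\nu^\ast E|_{\wt U}$ --- together with the $\exp$-compatibility of $B(U)\to\End(E(U))$; the rest is formal.
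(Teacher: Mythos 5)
Your proposal is correct and follows essentially the same route as the paper: both arguments compare the explicit $\Delta$-module descriptions of $\LS_f^{-1}(E,\theta)$ from \Cref{t:local-corresp} and of $\mathcal L_{\tau_\theta,f}$ from \Cref{l:tech-lemma-descr-Ltau}, and then reduce to checking that $\rho_{f,B}(\tau_\theta)(\gamma)$ acts on $E(U)$ as $\rho_{f,\uEnd(E)}(\theta)(\gamma)$, which follows from the naturality of $\rho_{f,-}$ with respect to $B\hookrightarrow\uEnd(E)$ together with the defining property \Cref{eq:tau-goes-to-theta} of $\tau_\theta$. The extra bookkeeping you spell out (the refinement of the cover, the untwisted descent datum on $\nu^\ast E$, and the $\exp$-compatibility of the algebra inclusion) is exactly what the paper leaves implicit.
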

	\begin{proof}
		Comparing  the descriptions in \Cref{t:local-corresp} and \Cref{l:tech-lemma-descr-Ltau}, it suffices to see that for any $\gamma \in \Delta$, the element $\rho_{f,B}(\tau_\theta)(\gamma)\in B(U)$ acts on $E(U)$ as $\rho_{f,\End(E)}(\theta_E)(\gamma)\in \End(E(U))$. But by naturality of $\rho_{f,-}$ applied to the map $B\to \uEnd(E)$, we have a commutative diagram
		\[
		\begin{tikzcd}
			{H^0(U,\wtOm\otimes B)} \arrow[d] \arrow[r, "\rho_{f}"] & {\Hom(\Delta,B(U))} \arrow[d] & \tau_\theta \arrow[d, maps to] \arrow[r, maps to] & \rho_f(\tau_\theta) \arrow[d, maps to] \\
			{H^0(U,\wtOm\otimes \uEnd(E))} \arrow[r, "\rho_f"]   & {\Hom(\Delta,\End(E|_U))}     & \theta_E \arrow[r, maps to]                    & \rho_f(\theta_E)                
		\end{tikzcd}\]
		where the vertical maps send $\tau_\theta$ to $\theta_E$ by the defining property of $\tau_\theta$ in \Cref{eq:tau-goes-to-theta}.
	\end{proof}
	
	\subsection{Rodr\'iguez Camargo's Higgs field}
	Motivated by earlier results of Pan \cite[\S 3.1]{PanLocallyAnalytic}, it was first observed by Rodr\'iguez Camargo in \cite[Thm.~1.0.3]{camargo2022geometric},  that one can use the Local correspondence to endow any pro-\'etale vector bundle with a \textit{canonical} Higgs field in a natural way. Since his construction is written for a different technical setup, we now give a slight reinterpretation, which simplifies the proof somewhat in our special case of interest.
	\begin{Theorem}\label{t:Camargos-Higgs-field}
		Let $X$ be a smooth rigid space over $K$. 
		To simplify notation,  let us still denote  by $\wtOm_X$ the pullback $\nu^{\ast}\wtOm_X$ of the sheaf of \Cref{d:wtOm} along $\nu:X_\proet\to X_\et$.  Then there is a unique way to endow any pro-\'etale vector bundle $V$ on $X$ with a Higgs field
		\[ \theta_V:V\to V\otimes_{\O_X}\wtOm_X\]
		on $X_{\proet}$ in such a way that the following conditions hold:
		\begin{enumerate}
			\item The association $V\mapsto \theta_V$ is functorial in $V$ and $X$.
			\item If $X$ is toric and $f:X\to \mathbb T^d$ is a toric chart, then $\theta_V$ corresponds in terms of the associated Higgs bundle $(E,\theta_E):=\mathrm{LS}_f(V)$ to the tautological morphism of Higgs bundles
		$\theta_E: (E,\theta_E)\to (E,\theta_E)\otimes (\wtOm,0)$, where $(\wtOm,0)$ is $\wtOm$ with the trivial Higgs field.
		\item We have $\theta_V=0$ if and only if $V$ is \'etale-locally trivial on $X$.
		\end{enumerate}
	\end{Theorem}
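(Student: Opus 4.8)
The plan is to reduce the whole statement to the Local correspondence $\LS_f$ of \Cref{t:local-corresp}, using that $\LS_f^{-1}(\wtOm_U,0)=\nu^\ast\wtOm_U$ canonically (independently of $f$, since the Higgs field is zero and $\wtOm_U$ is pulled back from $U_\et$), and then to show that the resulting chart-local construction does not depend on the chart. For \emph{uniqueness}: since every pro-\'etale vector bundle and every Higgs bundle becomes small on an \'etale cover, the objects $\wt U$ with $U\to X$ \'etale, $U$ toric and $V|_U$ small form a basis of $X_\proet$, and on such a $\wt U$ conditions (1) and (2) force
\[\theta_V|_U=\LS_f^{-1}(\theta_{E_f})\colon V|_U\to V|_U\otimes\nu^\ast\wtOm_U,\]
where $(E_f,\theta_{E_f}):=\LS_f(V|_U)$ and $\theta_{E_f}\colon(E_f,\theta_{E_f})\to(E_f,\theta_{E_f})\otimes(\wtOm_U,0)$ is the tautological morphism of Higgs bundles. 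Hence there is at most one such assignment.

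For \emph{existence} I would take the displayed formula as the definition of $\theta_V|_U$ and check: (i) it is a Higgs field, because $\LS_f^{-1}$ is an exact tensor functor, so $\theta_V\wedge\theta_V=\LS_f^{-1}(\theta_{E_f}\wedge\theta_{E_f})=0$; (ii) chart-independence (below); (iii) gluing, which granting (ii) is formal, since the intersection of two toric opens on which $V$ is small is covered by toric opens to which the given charts restrict, so the local morphisms agree there and descend to a morphism $\theta_V\colon V\to V\otimes\nu^\ast\wtOm_X$ on $X_\proet$; (iv) properties (1) and (3). Functoriality in $V$ is inherited from functoriality of $\LS_f$ together with naturality of $(E,\theta_E)\mapsto\theta_E$; functoriality in $X$ along \'etale maps follows from compatibility of $\LS_f$ with \'etale base change, and in general from the intrinsic description obtained in (ii). For (3): if $V$ is \'etale-locally trivial then \'etale-locally $\LS_f(V)\cong(\O_U^n,0)$, so $\theta_V=\LS_f^{-1}(0)=0$, and vanishing of $\theta_V$ may be checked \'etale-locally; conversely $\theta_V|_U=0$ forces $\theta_{E_f}=0$ by faithfulness of the equivalence $\LS_f$, hence $(E_f,\theta_{E_f})=(E_f,0)$ and $V|_U\cong\LS_f^{-1}(E_f,0)=\nu^\ast E_f$ is \'etale-locally trivial.

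The main obstacle will be chart-independence. Given two toric charts $f,g\colon U\to\mathbb{T}^d$ with $V|_U$ small, I would write $\wt U_f,\wt U_g$ for the associated toric $\Z_p(1)^d$-covers (Galois groups $\Delta_f,\Delta_g$) and pass to a common affinoid perfectoid refinement $\wt U_{fg}$ in $U_\proet$, pro-finite-\'etale over $U$ and mapping equivariantly to both $\wt U_f$ and $\wt U_g$. As morphisms of pro-\'etale sheaves may be tested on the basis, it suffices to compare $\theta_V^f$ and $\theta_V^g$ after restriction to $\wt U_{fg}$. Using the explicit description in \Cref{t:local-corresp}, $\theta_V^f|_{\wt U_{fg}}$ is obtained by differentiating the semilinear action of $\Delta_f$ on $V(\wt U_{fg})$ and transporting along the pairing $\rho_f$ of \Cref{d:rho}, i.e. along the Kodaira--Spencer identification $\Lie(\Delta_f)\otimes_{\Q_p}K\cong\wtOm_U^\vee(U)$ attached to the cover $\wt U_f\to U$; likewise for $g$. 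Since the deck group of $\wt U_{fg}\to U$ surjects onto $\Delta_f$ and onto $\Delta_g$ with Kodaira--Spencer maps into $\wtOm_U^\vee(U)$ compatible with these surjections, both restrictions arise by differentiating one and the same semilinear action and applying one and the same pairing, and hence coincide. I expect the only genuine work here to be making ``differentiating the semilinear action'' precise over the perfectoid $\wt U_{fg}$ and verifying the compatibility of the two Kodaira--Spencer maps; this is the decompletion input, which can alternatively be quoted from \cite{PanLocallyAnalytic,camargo2022geometric}.
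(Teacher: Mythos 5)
Your overall architecture (define $\theta_V$ locally on a basis of small toric objects via $\LS_f^{-1}$ applied to the tautological morphism $\theta_E\colon(E,\theta_E)\to(E,\theta_E)\otimes(\wtOm,0)$; get uniqueness from (1) and (2) on that basis; deduce (1) and (3) from exactness/faithfulness of $\LS_f$) coincides with the paper's. The one step where you genuinely diverge is the one you correctly identify as the crux, chart-independence, and there your argument as written has a gap. Passing to the common refinement $\wt U_{fg}$ with deck group $\Delta_f\times\Delta_g$ and ``differentiating the semilinear action'' does not by itself yield the equality $\theta_V^f=\theta_V^g$: that equality is equivalent to the assertion that the derivative of the full $\Delta_f\times\Delta_g$-action (on whatever class of vectors the differentiation is performed) factors through the \emph{joint} Kodaira--Spencer map $\Lie(\Delta_f\times\Delta_g)\otimes K\to\wtOm_U^\vee\otimes\O(\wt U_{fg})$, i.e.\ that its $d$-dimensional kernel acts trivially to first order. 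This does not follow formally from the compatibility of the two individual identifications $\rho_f,\rho_g$; it is precisely the ``geometricity'' of the Sen operator, which is the main theorem of Pan and Rodr\'iguez Camargo that you propose to quote. So either you import \cite[Thm.~1.0.3]{camargo2022geometric} essentially wholesale (at which point most of the statement being proved is already contained in what you quote, defeating the purpose of an independent proof), or the ``hence coincide'' needs a real argument.

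The paper's route avoids Sen theory and decompletion entirely. Given two charts $f,f'$ with $(E,\theta)=\LS_f(V)$ and $(E',\theta')=\LS_{f'}(V)$, it forms the coherent algebra $B$ generated by $\theta,\theta'$ acting on $E\oplus E'\oplus\wtOm$ and uses \Cref{l:LS-is-twisting} to rewrite both local correspondences as twists $V\cong\nu^\ast E\otimes_\B\mathcal L_{\tau_B,f}$ and $V\cong\nu^\ast E'\otimes_\B\mathcal L_{\tau_B,f'}$. Since $\HTlog(\mathcal L_{\tau_B,f})=\tau_B=\HTlog(\mathcal L_{\tau_B,f'})$, the left-exact sequence of \Cref{c:Leray-seq-for-B^x} provides, after \'etale localisation, a $\B$-linear isomorphism $\mathcal L_{\tau_B,f}\cong\mathcal L_{\tau_B,f'}$, hence an isomorphism of Higgs bundles $(E,\theta)\cong(E',\theta')$ compatible with the two identifications with $V$; the clincher is then purely formal: $\theta_V$ and $\theta'_V$ are conjugate by an automorphism $\Psi$ of $V$, and functoriality of $\theta_V$ (for the fixed chart $f$) applied to $\Psi$ forces $\theta_V=\Psi\theta_V\Psi^{-1}=\theta'_V$. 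What this buys is a self-contained proof using only the $\HTlog$-exact sequence already established in the paper, with no locally analytic vectors or decompletion; what your route would buy, if the Sen-theoretic input is supplied, is a chart-free (``intrinsic'') description of $\theta_V$, which would indeed make functoriality in $X$ more transparent than the paper's ``by the same argument''.
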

	\begin{Remark}\label{r:RCH-field}
		For any local basis $\omega_1,\dots,\omega_d$ of $\wtOm_X$, the tensor product of Higgs bundles $(E,\theta_E)\otimes (\wtOm,0)$ in (2) is $E\otimes \wtOm$ with the Higgs field given in terms of $\theta=\sum_i\theta_i\omega_i$ by
			\[ \theta_E:\textstyle E\otimes \wtOm_X\to E\otimes \wtOm_X\otimes \wtOm_X,\quad \sum_i e_i\otimes \omega_i\mapsto \sum_i\sum_j \theta_j(e)\otimes \omega_i\otimes \omega_j.\]
		More explicitly, (2) means that  $\theta_V$ corresponds on the toric cover $\wt X\to X$ to the natural map
		\[ \O(\wt X)\otimes_{\O(X)} E(X)\to  \O(\wt X)\otimes_{\O(X)} E(X)\otimes \wtOm_X, \quad a\otimes e\mapsto a\otimes \theta(e)\]
		which commutes with the $\Delta$-action as $\exp(\theta_i)$ commutes with each $\theta_j$.
	\end{Remark}
	\begin{proof}[{Proof of \Cref{t:Camargos-Higgs-field}}]
		It is clear that (2) defines a Higgs field $\theta_V:V\to V\otimes \wtOm$ for any small pro-\'etale vector bundle $V$ if $X$ is toric. This is functorial in $V$ since any morphism of Higgs bundles $\phi:(E,\theta)\to (E',\theta')$ induces a morphism $\phi\otimes \id:(E,\theta)\otimes (\wtOm,0)\to (E',\theta')\otimes (\wtOm,0)$.
		
		It therefore suffices to prove that $\theta_V$ is independent of the choice of toric chart:
		Let $f'$ be a second toric chart and let $\theta_V':V\to V\otimes \wtOm$ be the Higgs field induced via $\LS_{f'}$. 
		 We need to show that $\theta_V=\theta_V'$. To see this, we may replace $X$ by any \'etale cover. Let $(E',\theta')=\LS_{f'}(V)$,  then there exists a non-canonical isomorphism between $(E,\theta)$ and $(E',\theta')$ after \'etale localisation on $X$, e.g.\ by \cite[Thm~1.2]{heuer-sheafified-paCS}. Indeed, we can see this via twisting: Let $B$ be the coherent quotient of $(\theta,\theta',0):T_X\to \End((E,\theta)\oplus(E',\theta')\oplus (\wtOm,0))$ from \Cref{d:wtTX-action} and let $\B=\nu^\ast B$.
		  Set $L:=\mathcal L_{\tau_B,f}$ and $L':=\mathcal L_{\tau_B,f'}$. Then by \Cref{l:LS-is-twisting}, and \Cref{r:enlarge-B}, we see that we can use $L$ and $L'$ to compute $\LS_f$ and $\LS_{f'}$, namely we can find isomorphisms
		 \[\lambda:V=\LS_f^{-1}(E,\theta)\isomarrow \nu^\ast E\otimes_\B L \text{ and }\lambda':V=\LS_{f'}^{-1}(E',\theta')\isomarrow \nu^\ast E'\otimes_\B L'.\] 
		 Since $\HTlog(L)=\tau_B=\HTlog(L')$, there is by \Cref{l:tech-lemma-descr-Ltau} after a further localisation a $\B$-linear isomorphism $\psi:L\isomarrow L'$. We can combine this to an isomorphism
		 \[ \LS_f^{-1}(E,\theta)=V\xrightarrow{\lambda'} \nu^\ast E'\otimes_\B L'\xrightarrow{\id\otimes \psi^{-1}} \nu^\ast E'\otimes_\B L\isomarrow \LS_f^{-1}(E',\theta').\]
		 Since $\LS_f^{-1}$ is fully faithful, this comes from an isomorphism
		 $\phi:(E,\theta)\isomarrow (E',\theta')$ of Higgs bundles.
		 Summarising the discussion, this shows  that the following diagram commutes:
		\[
		\begin{tikzcd}
			   & \theta_V: V \arrow[d, "\id",xshift=0.32cm] \arrow[r, "\lambda"] &  \nu^\ast E\otimes_\B L \arrow[d, "\phi\otimes \psi"] \arrow[r, "\theta_E\otimes \id"] & \nu^\ast(E\otimes \wtOm)\otimes_\B L \arrow[d, "(\phi\otimes \id)\otimes \psi"] \arrow[r, "\lambda^{-1}"] & V\otimes \wtOm \arrow[d, "\id\otimes \id"] \\
		&	\theta_{V}':  V \arrow[r, "\lambda'"]                 &  \nu^\ast E'\otimes_\B L' \arrow[r, "\theta_{E'}\otimes \id"]                 & \nu^\ast(E'\otimes \wtOm)\otimes_\B L' \arrow[r, "\lambda'^{-1}"]                          & V\otimes \wtOm                             
		\end{tikzcd}\]
		This shows that $\theta_V=\theta_{V'}$.
		Functoriality in $X$ can be seen by the same argument.
	\end{proof}
	Alternatively, even without checking that the left square at the end of the proof commutes, we could simply define $\Psi:V\to V$ as the unique isomorphism making the square commute. Then the diagram says that we have an isomorphism $\Psi:(V,\theta_V)\to (V,\theta'_V)$. But since $\theta_V$ commutes with any endomorphism of $V$ by functoriality, this implies that $\theta_V=\theta_V'$.
	\begin{Corollary}\label{c:theta_E-vs-theta_V}
		If $U$ is a smooth rigid space with toric chart $f:U\to \mathbb T^d$, then the isomorphism $E(\wt U)=V(\wt U)$ of \Cref{t:local-corresp} identifies the pullback of $\theta_E$ and $\theta_V$ to $\wt U$.
	\end{Corollary}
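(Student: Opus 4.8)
The plan is to unwind the two constructions on the toric cover $\wt U$, where both Higgs fields admit a completely explicit description, and to observe that they literally coincide. First I would invoke \Cref{t:Camargos-Higgs-field}.(2): by definition, $\theta_V$ is the image under the equivalence $\LS_f^{-1}$ of the tautological morphism of Higgs bundles $\theta_E\colon (E,\theta_E)\to (E,\theta_E)\otimes(\wtOm_U,0)$. Feeding this through the explicit formula for $\LS_f$ in \Cref{t:local-corresp}, together with the reformulation already recorded in \Cref{r:RCH-field}, shows that under the identification $V(\wt U)=\O(\wt U)\otimes_{\O(U)}E(U)$ of \Cref{t:local-corresp} (and the analogous one for $V\otimes\wtOm_U$), the pullback of $\theta_V$ to $\wt U$ is the map $a\otimes e\mapsto a\otimes\theta_E(e)$.

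Next I would compare this with the pullback of $\theta_E$ itself. Pulling back the $\O_U$-linear morphism $\theta_E\colon E\to E\otimes_{\O_U}\wtOm_U$ along the pro-\'etale cover $\wt U\to U$ is, by the very definition of pullback of sheaves of modules, just base change along $\O(U)\to\O(\wt U)$, and hence is again given by $a\otimes e\mapsto a\otimes\theta_E(e)$. Since the isomorphism $E(\wt U)=V(\wt U)$ of \Cref{t:local-corresp} is by construction the identity on the underlying $\O(\wt U)$-module, with only the $\Delta$-action being twisted, the two maps agree, which is exactly the assertion.

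Thus there is no real obstacle here: the statement is essentially a bookkeeping consequence of \Cref{t:Camargos-Higgs-field}.(2) and \Cref{r:RCH-field}. The only points requiring care are matching the conventions in the two descriptions (which tensor slot $\wtOm_U$ occupies, and the fact that $E(\wt U)$ is to be read as $\O(\wt U)\otimes_{\O(U)}E(U)$ with $\theta_E$ acting by base change). In particular no convergence estimate or use of $\exp$ enters, since we compare the two fields only after pulling back to $\wt U$, where the twisting of the $\Delta$-action is irrelevant.
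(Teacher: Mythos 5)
Your proposal is correct and follows essentially the same route as the paper, which disposes of the corollary by citing \Cref{t:Camargos-Higgs-field}.(2) together with the functorial identification $E(\wt U)=V(\wt U)$ from \Cref{t:local-corresp}; your unwinding via \Cref{r:RCH-field} is just the explicit form of that argument. No gaps.
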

	\begin{proof}
		Immediate from \Cref{t:Camargos-Higgs-field}.2 and functoriality in the last part of \Cref{t:local-corresp}.
	\end{proof}
	
	\begin{Corollary}\label{p:twist-of-HB-recovers-can-Higgs-field}
		Let $X$ be a smooth rigid space and let  $(E,\theta_E)$ be a Higgs bundle on $X$. Let $B\subseteq \uEnd(E)$ and $\tau_B$ be associated to $\theta_E$ as in \Cref{d:wtTX-action} and set $\B:=\nu^\ast B$. Let $\mathcal L$ be any invertible $\B$-module on $X$ such that $\HTlog(\mathcal L)=\tau_B$. Then $V:=\nu^\ast E\otimes_{\B}\mathcal L$ is a pro-\'etale vector bundle on $X$ whose canonical Higgs field $\theta_V:V\to V\otimes \wtOm_X$ of \Cref{t:Camargos-Higgs-field} is given by
		\[\nu^\ast \theta_E\otimes_{\B} \mathcal L:V\to V\otimes \wtOm_X\]
		defined more explicitly as the composition $\Sym^\bullet \wtOm_X^{\vee}\xrightarrow{\theta} \uEnd_B(E)\xrightarrow{\nu^\ast(-)\otimes_{\B}\mathcal L}\nu_\ast\uEnd(V)$.
	\end{Corollary}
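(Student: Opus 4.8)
The plan is to reduce everything to the toric situation and then match the two Higgs fields over the toric cover. First, note that $\nu^{\ast}\theta_E\otimes_{\B}\mathcal L$ is defined in the statement as a \emph{global}, chart-independent $\O_X$-linear morphism $V\to V\otimes\wtOm_X$ of sheaves on $X_{\proet}$, via the composition $\Sym^\bullet\wtOm_X^{\vee}\xrightarrow{\theta}\uEnd_B(E)\xrightarrow{\nu^{\ast}(-)\otimes_{\B}\mathcal L}\nu_{\ast}\uEnd(V)$; it is a Higgs field because $B$ is commutative, so $\theta_E\wedge\theta_E=0$, and this is preserved by the $\B$-linear operation $\nu^{\ast}(-)\otimes_{\B}\mathcal L$. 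Similarly, that $V=\nu^{\ast}E\otimes_{\B}\mathcal L$ is a pro-\'etale vector bundle is local on $X$: by \Cref{l:tech-lemma-descr-Ltau} we may \'etale-locally assume $\mathcal L\cong\mathcal L_{\tau_B,f}$ for a toric chart $f$, and then $V$ is the descent along the toric cover $\wt U\to U$ of the finite free $\O(\wt U)$-module $\O(\wt U)\otimes_{\O(U)}E(U)$, hence finite locally free.

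To prove $\nu^{\ast}\theta_E\otimes_{\B}\mathcal L=\theta_V$, I would check this \'etale-locally, so assume $X=U$ is toric with a toric chart $f\colon U\to\mathbb T^d$ and, shrinking further by \Cref{l:tech-lemma-descr-Ltau}, that $\mathcal L\cong\mathcal L_{\tau_B,f}$. By \Cref{l:LS-is-twisting} we have $V\cong\nu^{\ast}E\otimes_{\B}\mathcal L_{\tau_B,f}\cong\LS_f^{-1}(E,\theta_E)$, and, unwinding the $\Delta$-equivariant descriptions in \Cref{t:local-corresp} and \Cref{l:tech-lemma-descr-Ltau}, this isomorphism restricts over the toric $\Delta$-cover $\wt U\to U$ to the tautological identification $V(\wt U)=\O(\wt U)\otimes_{\O(U)}E(U)$. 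By \Cref{c:theta_E-vs-theta_V} and \Cref{r:RCH-field}, $\theta_V$ then corresponds over $\wt U$ to the base change $a\otimes e\mapsto a\otimes\theta_E(e)$ of $\theta_E$. On the other hand, over $\wt U$ one has $\mathcal L|_{\wt U}\cong\B|_{\wt U}$, so $(\nu^{\ast}E\otimes_{\B}\mathcal L)(\wt U)\cong\O(\wt U)\otimes_{\O(U)}E(U)$, and under this identification $\nu^{\ast}\theta_E\otimes_{\B}\mathcal L$ is manifestly again $a\otimes e\mapsto a\otimes\theta_E(e)$. Since $\wt U\to U$ is a pro-\'etale cover and $V$, $V\otimes\wtOm_U$ are sheaves on $U_{\proet}$, the equality of the two morphisms over $\wt U$ forces their equality over $U$, and the local statements glue to the global one.

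The step requiring the most care is verifying that the isomorphism of \Cref{l:LS-is-twisting} restricts, over $\wt U$, to the \emph{obvious} identification of $\O(\wt U)$-modules $\O(\wt U)\otimes_{\O(U)}E(U)$ on both sides, so that \Cref{c:theta_E-vs-theta_V} and \Cref{r:RCH-field} apply on the nose; this amounts to tracking that all the constructions involved (the $\Delta$-module in \Cref{t:local-corresp}, the one defining $\mathcal L_{\tau_B,f}$, and the twist $\nu^{\ast}E\otimes_{\B}\mathcal L_{\tau_B,f}$) are obtained by applying $\O(\wt U)\otimes_{\O(U)}(-)$ to data over $U$, with the $\Delta$-action twisted by the same cocycle $\exp(\rho_{f,B}(\tau_B))$. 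Alternatively, one can avoid an explicit choice of chart by arguing as in the proof of \Cref{t:Camargos-Higgs-field}: since $\theta_V$ commutes with every pro-\'etale endomorphism of $V$ and $\nu^{\ast}\theta_E\otimes_{\B}\mathcal L$ is functorial in $(E,\theta_E)$, the comparison over the small pieces $\nu^{\ast}E\otimes_{\B}\mathcal L_{\tau_B,f}$ transports along the (non-canonical) local isomorphisms $V\cong\LS_f^{-1}(E,\theta_E)$ to give the desired equality.
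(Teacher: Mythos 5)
Your proposal is correct and follows essentially the same route as the paper: reduce étale-locally to a toric chart, use \Cref{l:tech-lemma-descr-Ltau} to replace $\mathcal L$ by $\mathcal L_{\tau_B,f}$, identify $V$ with $\LS_f^{-1}(E,\theta_E)$ via \Cref{l:LS-is-twisting}, and conclude from \Cref{t:Camargos-Higgs-field}.(2). The only difference is that you spell out the identification over the toric cover $\wt U$ (and the sheaf-theoretic globalisation) in more detail than the paper, which is a harmless elaboration rather than a departure.
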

	\begin{proof}
		The conclusion is \'etale-local on $X$, so we may assume that $X$ is toric with a toric chart $f:X\to \mathbb T^d$. By \Cref{l:tech-lemma-descr-Ltau}, we can after a further \'etale localisation  assume that there is an isomorphism $\mathcal L\isomarrow \mathcal L_{\tau_B,f}$, so it suffices to prove the statement for $V= \nu^\ast E\otimes_{\B}\mathcal L_{\tau_B,f}$.  But then we have $\LS_f(V)=(E,\theta)$ by \Cref{l:LS-is-twisting}. The claim then follows from \Cref{t:Camargos-Higgs-field}.2 and \Cref{l:LS-is-twisting}, which say that $\theta_V=\LS_f(\theta_E)=\nu^\ast\theta_E\otimes_\B\mathcal L_{\tau_B,f}$.
	\end{proof}

	We now explain how to use the canonical Higgs field $\theta_V$ to pass from  pro-\'etale vector bundles to Higgs bundles  by twisting with invertible $\B$-modules. This is based on the following:

	\begin{Definition}\label{d:make-proetVB-B-module}
		As in \Cref{d:wtTX-action}, for any pro-\'etale vector bundle $V$ on $X$, we can equivalently regard the canonical Higgs field $\theta_V$ as a homomorphism 
		\[ \wtOm_X^{\vee}\to \nu_\ast\uEnd(V), \quad \partial\mapsto (V\xrightarrow{\theta_V} V\otimes \wtOm_X\xrightarrow{\id\otimes \partial}V)\]
		on $X_{\et}$ that extends to an $\O_X$-algebra homomorphism
		$\theta_V:T_X\to \nu_\ast\uEnd(V)$.
		Let $B=B_V$ be the image of this map.  By \Cref{c:End(V)-coherent}, this is a coherent $\O_X$-algebra. Set $\B:=\nu^\ast B_V$, then $V$ is a $\B$-module in a canonical way. Like in \Cref{d:wtTX-action}, the image of $\id\in \uHom(\wtOm, \wtOm)=\wtOm^{\vee}\otimes\wtOm$ under the map $\wtOm^{\vee}\to B$ then defines a canonical section $\tau_{B}:=\tau_{\theta_V}\in H^0(X,B\otimes \wtOm)$.
	\end{Definition}
	\begin{Proposition}\label{p:twist-of-vVB-is-analytic}
		Let $X$ be a smooth rigid space. Let $V$ be a pro-\'etale vector bundle on $X$. Let $B=B_V$ and $\B:=\nu^{\ast}B_V$ be as in \Cref{d:make-proetVB-B-module}. Let  $L$ be any pro-\'etale invertible \mbox{$\B$-module} on $X_\proet$ with $\HTlog(L)=\tau_B\in H^0(X,B\otimes \wtOm)$. Then
		$E:=V\otimes_{\B} L^{-1}$
		is an analytic-locally trivial vector bundle on $X$, which inherits a natural Higgs field $\theta_E:=\theta_V\otimes_{\B} L^{-1}$.
	\end{Proposition}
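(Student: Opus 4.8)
The plan is to reduce the statement to the toric local situation governed by the Local correspondence (\Cref{t:local-corresp}), where one can identify $E$ with the $\nu^\ast$ of an honest analytic Higgs bundle, and then glue by \'etale descent. Since being analytic-locally trivial is an \'etale-local condition and the Higgs condition can be checked \'etale-locally, it suffices to produce, after passing to an \'etale cover $\{U_i\to X\}$, isomorphisms $E|_{U_i}\cong\nu^\ast E_i'$ for analytic vector bundles $E_i'$ on $U_i$ carrying analytic Higgs fields $\theta_{E_i'}$ under which $\theta_E|_{U_i}$ corresponds to $\nu^\ast\theta_{E_i'}$. The functor $\nu^\ast$ is fully faithful on vector bundles, because $\nu_\ast\nu^\ast G=G$ for coherent $G$ (the $n=0$ case of \Cref{p:RnuB^*}.(ii)); hence the transition isomorphisms on overlaps descend uniquely to isomorphisms of the $E_i'$ satisfying the cocycle condition and compatible with the $\theta_{E_i'}$, so by \'etale descent for vector bundles the $(E_i',\theta_{E_i'})$ glue to an analytic Higgs bundle $(E',\theta_{E'})$ on $X$ with $\nu^\ast(E',\theta_{E'})\cong(E,\theta_E)$ (the Higgs condition being inherited since it holds over each $U_i$).

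So I may assume $X=U$ is toric with a chart $f\colon U\to\mathbb{T}^d$, and, since every pro-\'etale vector bundle becomes small after an \'etale cover, that $V$ is small. Let $(E',\theta')=\LS_f(V)$ be the associated small Higgs bundle. Forming the image of $\theta_V\colon T_X\to\nu_\ast\uEnd(V)$ commutes with restriction to $U$, and under \Cref{t:local-corresp} together with \Cref{c:theta_E-vs-theta_V} the canonical Higgs field $\theta_V$ corresponds to $\theta_{E'}$; hence the coherent algebra $B=B_V$ of \Cref{d:make-proetVB-B-module} corresponds to the image $B'\subseteq\uEnd(E')$ of $\theta'$ as in \Cref{d:wtTX-action}, and the tautological section $\tau_B$ corresponds to $\tau_{\theta'}$.

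Next, since $\HTlog(L)=\tau_B$, \Cref{l:tech-lemma-descr-Ltau} lets me fix, after a further \'etale localization on $U$, a $\B$-linear isomorphism $L\cong\mathcal L_{\tau_B,f}$. By \Cref{l:LS-is-twisting} one then has $\nu^\ast E'\otimes_{\B}\mathcal L_{\tau_B,f}\cong\LS_f^{-1}(E',\theta')=V$, whence a canonical identification
\[E=V\otimes_{\B}L^{-1}\cong\nu^\ast E'\otimes_{\B}\mathcal L_{\tau_B,f}\otimes_{\B}\mathcal L_{\tau_B,f}^{-1}\cong\nu^\ast E'.\]
As $E'$ is an analytic vector bundle on $U$, this exhibits $E$ as analytic-locally trivial. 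For the Higgs field, \Cref{p:twist-of-HB-recovers-can-Higgs-field} applied to $V=\nu^\ast E'\otimes_{\B}\mathcal L_{\tau_B,f}$ gives $\theta_V=\nu^\ast\theta_{E'}\otimes_{\B}\mathcal L_{\tau_B,f}$; twisting by $L^{-1}\cong\mathcal L_{\tau_B,f}^{-1}$ identifies $\theta_E=\theta_V\otimes_{\B}L^{-1}$ with $\nu^\ast\theta_{E'}$ under the isomorphism above. Thus locally $\theta_E$ is pulled back from the analytic Higgs bundle $(E',\theta_{E'})$, which also gives the Higgs condition locally; applying the globalization of the first paragraph finishes the proof.

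I expect the only genuine subtlety to be the bookkeeping in the middle step: matching the globally defined pair $(B_V,\tau_B)$ with the data $(B',\tau_{\theta'})$ produced by the Local correspondence, and keeping track that \Cref{l:tech-lemma-descr-Ltau} and \Cref{l:LS-is-twisting} are only available after a further \'etale localization, so that the whole construction indeed lives \'etale-locally; once that is set up, the passage back to $X$ is a formal descent argument using full faithfulness of $\nu^\ast$ on vector bundles.
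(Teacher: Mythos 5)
Your proof is correct and follows essentially the same route as the paper: reduce to the small toric case, use \Cref{l:LS-is-twisting} to write $V=\nu^\ast E'\otimes_{\B}\mathcal L_{\tau_B,f}$, identify $B_V$ with $B_{\theta'}$, and conclude that $E$ differs from $\nu^\ast E'$ by an invertible $\B$-module killed by $\HTlog$, hence \'etale-locally trivial. The only cosmetic differences are that the paper quotes \Cref{c:Leray-seq-for-B^x} directly for the last step rather than \Cref{l:tech-lemma-descr-Ltau} (which amounts to the same thing), and leaves the final descent back to $X$ implicit since the conclusion is local.
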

	\begin{proof}
		The statement is \'etale-local, so we may assume that $X$ is  toric and fix a toric chart $f:X\to \mathbb T^d$ so that  we are in the setup of \Cref{t:local-corresp}:  More precisely, by \Cref{l:LS-is-twisting}, we may assume that there is a Higgs bundle $(E',\theta')$ on $X$ such that $V=\nu^\ast E'\otimes_\B\mathcal L_{\tau_B,f}$. By \Cref{t:Camargos-Higgs-field}.2, the canonical Higgs field $\theta_V$ is then $\nu^\ast E'\otimes_\B\mathcal L_{\tau_B,f}\to \nu^\ast(E'\otimes \wtOm)\otimes_\B\mathcal L_{\tau_B,f}$. More precisely, we a priori need to be more careful and use $B=B_{\theta'}$, but the natural map
		\[ \theta_V:\wtOm_X^\vee\to \nu_\ast\uEnd(V)\]
		factors through $-\otimes_{\B_{\theta'}}\mathcal L_{\tau_{\theta'},f}:\uEnd(E',\theta')\to \nu_\ast\uEnd(V)$, which shows that $B_{\theta'}=B_V$. Hence
		\[ E=V\otimes_{\B} L^{-1}=\nu^\ast E'\otimes_{\B}\mathcal L_{\tau_B,f}\otimes_{\B} L^{-1}.\]
		To prove that $E$ is \'etale-locally trivial, it thus suffices to prove that $\mathcal L_{\tau_B,f}\otimes_{\B}L^{-1}$ is an \'etale-locally trivial invertible $\B$-module. By \Cref{c:Leray-seq-for-B^x}.2, this follows from the fact that
		\[ \HTlog(\mathcal L_{\tau_B,f}\otimes_{\B}\mathcal L^{-1})=\HTlog_B(\mathcal L_{\tau_B,f})-\HTlog_B(\mathcal L^{-1})=\tau_B-\tau_B=0.\]
		The Higgs field comes from the fact that $-\otimes_{\B} L^{-1}$ defines a natural map $\nu_\ast\uEnd(V)\to \uEnd(E)$. We can compose it with $\theta_V:T_X\to \nu_\ast\uEnd(V)$ to get the desired Higgs field $T_X\to \uEnd(E)$.
		\end{proof}
	
	\section{The $p$-adic Simpson correspondence}
	\subsection{Proof of Main Theorem}
	
	We can now prove our main result.
	\begin{Theorem}\label{t:p-adicSimpson-proper}
		Let $K$ be a complete algebraically closed extension of $\Q_p$.
		Let $X$ be a smooth proper rigid space over $K$. Let $\nu:X_\proet\to X_\et$ be the natural map. Let $\mathbb X$ be a $\BdR^+/\xi^2$-lift of $X$ and let $\Exp$ be an exponential for $K$.  Such choices always exist. According to \Cref{t:inv-B-module-assoc-to-HF}, they induce for any $\O_X$-coherent $\O_X$-torsionfree $T_X:=\Sym^\bullet_{\O_X} \wtOm^\vee$-algebra $B$ an invertible $\nu^{\ast}B$-module $\mathcal L_B$ in a way that is functorial in $B$ and $(X,\mathbb X)$.
		\begin{enumerate}
			\item The following functors define an exact tensor equivalence of categories:
		\begin{eqnarray*}
		\mathrm{S}_{\mathbb X,\Exp}:\{\text{pro-\'etale vector bundles on }X\}&\isomarrow &\{\text{Higgs bundles on }X\}\\
		V& \mapsto & \nu_{\ast}\big((V,\theta_V)\otimes_{\nu^\ast B_V} \mathcal L_{B_V}^{-1}\big)\\
		\nu^\ast E\otimes_{\nu^\ast  B_\theta} \mathcal L_{B_\theta}& \mapsfrom & (E,\theta)
		\end{eqnarray*}
		 where $B_\theta$ and $B_V$ are as defined in Definitions~\ref{d:wtTX-action} and \ref{d:make-proetVB-B-module}. 
		 \item The equivalence $\mathrm{S}_{\mathbb X,\Exp}$ is natural in $(X,\mathbb X,K,\Exp)$: Let the setup be as in \Cref{t:inv-B-module-assoc-to-HF}.3, so $X'$ is a smooth proper rigid space with a lift $\mathbb X'$ and $f:X'\to X$ is a morphism over $K$. Then any lift $\tilde{f}:\mathbb X'\to \mathbb X$ induces a natural transformation $t_{\tilde{f}}$
		 \[
		 \begin{tikzcd}[row sep = 0.8cm]
		 	\{\text{pro-\'etale vector bundles on }X\} \arrow[r, "\mathrm S_{\mathbb X,\Exp}"]  \arrow[d,"f^\ast"]& \{\text{Higgs bundles on }X\}      \arrow[d,"f^\ast"] \\
		 	\{\text{pro-\'etale vector bundles on }X'\} \arrow[r, "\mathrm S_{\mathbb X',\Exp'}"]  \arrow[ru,Rightarrow,shorten >=7ex,shorten <=7.0ex,"t_{\tilde{f}}" {xshift=-1pt,yshift=-1.5pt}]& 	\{\text{Higgs bundles on }X'\}.
		 \end{tikzcd}
		 \]
		 such that $t_{(-)}$ is compatible with compositions.
		 \end{enumerate}
	\end{Theorem}
	
	\begin{Remark}
		In general, changing either $\mathbb X$ or $\Exp$ will have a non-trivial effect on $\mathrm S_{\mathbb X,\Exp}$ already on the level of isomorphism classes of objects, and already for line bundles it is subtle to describe this effect explicitly. In fact, comparing functors for different choices of lifts $\mathbb X$ is closely related to Faltings' notion of ``twisted pullback''.
	\end{Remark}
	\begin{proof}[Proof of \Cref{t:p-adicSimpson-proper}]
		We first show that the two mappings are well-defined on objects:
		
		Let $V$ be a pro-\'etale vector bundle on $X$. Let $B_V$ be as in \Cref{d:make-proetVB-B-module}. Note that this is $\O_X$-torsionfree because $\uEnd(V)$ is.
		By \Cref{t:inv-B-module-assoc-to-HF}.1, the invertible $\B_V:=\nu^\ast B_V$-module $\mathcal L_{B_V}$ satisfies $\HTlog(\mathcal L_{B_V})=\tau_{B_v}$. Then by \Cref{p:twist-of-vVB-is-analytic}, the $\O_X$-module $(V,\theta_V)\otimes_{\B_V}\mathcal L_{B_V}^{-1}$ on $X_\proet$ is an analytic-locally trivial vector  bundle endowed with a Higgs field. Hence its restriction to $X_{\et}$ is a  Higgs bundle.
		 In the other direction, since $E$ is a vector bundle and $\mathcal L_{B_{\theta}}$ is pro-\'etale locally on $X$ isomorphic to $\B_\theta:=\nu^\ast B_\theta$, it is clear that $\nu^\ast E\otimes_{\B_\theta}\mathcal L_{B_{\theta}}$ is a pro-\'etale vector bundle. Hence the mappings are well-defined on objects.
		
		To see that they are functorial, let $\varphi\colon V\to W$ be any morphism of pro-\'etale vector bundles. If $\varphi$ is an isomorphism, then we have a canonical isomorphism $B_V=B_W$ which induces an isomorphism $\mathrm{S}_{\mathbb X,\Exp}(V)\to \mathrm{S}_{\mathbb X,\Exp}(W)$. In general, we can write $\varphi$ as a composition
		\[V\xrightarrow{(\id,0)} V\oplus W\xrightarrow{\smallmat{1}{\varphi}{0}{1}} V\oplus W\xrightarrow{(0,\id)} W\]
		to reduce to showing that the construction is compatible with direct sums.
				
		It is clear from \Cref{t:Camargos-Higgs-field} and compatibility of the local correspondence with $\oplus$ that $\theta_{V\oplus W}=\theta_V\oplus \theta_W$ on $V\oplus W$. Consequently, the restriction of the $T_X$-action on $V\oplus W$ to either subspace defines natural maps
		$B_{V\oplus W}\to B_{V}$ and $B_{V\oplus W}\to B_{W}$.
		It thus suffices to observe that for any morphism of $\O_X$-coherent $T_X$-algebras $\psi:B'\to B$ with canonical sections $\tau_{B}$ and $\tau_{B'}$ as in \Cref{d:make-proetVB-B-module}, we have by  \Cref{t:inv-B-module-assoc-to-HF}.2 a canonical identification
		\[ (-)\otimes_{\B'}\mathcal L_{\tau_{B'}}^{-1}= (-)\otimes_{\B}\B\otimes_{\B'}\mathcal L_{\tau_{B'}}^{-1}= (-)\otimes_{\B}\mathcal L_{\tau_{B}}^{-1},\]
		where as usual we write $\B=\nu^\ast B$ and $\B'=\nu^\ast B'$.
		Applying this transformation to the inclusion $\psi=V\to V\oplus W$ and the projection $\psi=V\oplus W\to W$, we obtain the desired compatibility with $\oplus$. The exactness can be seen by the same argument.
		
		The other direction works in exactly the same way. Thus both functors are well-defined. 
		
		Let us write $\mathrm{T}_{\mathbb X,\Exp}$ for the functor from right to left.
		We claim that $\mathrm{S}_{\mathbb X,\Exp}$ and $\mathrm{T}_{\mathbb X,\Exp}$ are mutual quasi-inverses to each other:  Let $(E,\theta_E)$ be a Higgs bundle on $X$ and $V=\mathrm{T}_{\mathbb X,\Exp}(E,\theta_E)$. Then by \Cref{p:twist-of-HB-recovers-can-Higgs-field}, we have a commutative diagram
		\[
		\begin{tikzcd}[row sep = 0cm]
			& {\uEnd(E,\theta_E)} \arrow[dd,hook, "-\otimes_{\B_{\theta}}\mathcal L_\theta"] \\
			T_X \arrow[rd, "\theta_V"'] \arrow[ru, "\theta_E"] &                                                                       \\
			& \nu_\ast\uEnd(V)                                                             
		\end{tikzcd}\]
		where $\theta=\theta_E$ in the subscript.
		Since the vertical map is clearly injective, we thus have a canonical identification of the respective images $B_{\theta}=B_V$ of $\theta_E$ and $\theta_V$,  which identifies $\tau_{\theta}$ with $\tau_{B_V}$ and hence also $\mathcal L_V$ with $\mathcal L_\theta$. It follows that we have
		\[\mathrm{S}_{\mathbb X,\Exp} \circ \mathrm{T}_{\mathbb X,\Exp}(E,\theta)=\nu_\ast\big(\nu^\ast(E,\theta_E)\otimes_{\B_\theta}\mathcal L_\theta\otimes_{\B_V}\mathcal L_V^{-1})\big)=(E,\theta_E).\]
		The other direction can be seen in exactly the same way, using instead that the diagram
		\[
		\begin{tikzcd}[row sep = 0cm]
			& {\uEnd(E)} \\
		T_X \arrow[rd, "\theta_V"'] \arrow[ru, "\theta_E"] &                                                                       \\
			& \nu_\ast\uEnd(V)   \arrow[uu,hook, "-\otimes_{\B_{V}}\mathcal L_{B_V}^{-1}"']                                                           
		\end{tikzcd}\]
		commutes by definition of $\theta_E$ as in \Cref{p:twist-of-vVB-is-analytic}.
		
		For part (1), it remains to show that the equivalence is compatible with $\otimes$. Let $(E_1,\theta_1)$ and $(E_2,\theta_2)$ be two Higgs bundles on $X$.
		For $i=1,2$, let $B_i$ be the coherent $\O_X$-algebra associated to $(E_i,\theta_i)$. Let $B_3$ be the maximal $\O_X$-torsionfree quotient of $B_1\otimes_{\O_X}B_2$, this acts on $E_1\otimes_{\O_X} E_2$. Let $\B_i:=\nu^\ast B_i$ for $i=1,2,3$ and to simplify notation let $\mathcal L_i:=\mathcal L_{B_i}$. Then by \Cref{t:inv-B-module-assoc-to-HF}.4, we have  a natural isomorphism of $\B_3$-modules
		\begin{align*}
		\nu^\ast(E_1\otimes_{\O_X} E_2)\otimes_{\B_3}\mathcal L_3&\isomarrow \nu^\ast(E_1\otimes_{\O_X} E_2)\otimes_{\B_3}(\mathcal L_1\otimes_{\B_1}\B_3)\otimes_{\B_3} (\B_3\otimes_{\B_2}\mathcal L_2)\\&=(\nu^\ast E_1\otimes_{\B_1}\mathcal L_1)\otimes_{\O_X} (\nu^\ast E_2\otimes_{\B_2}\mathcal L_2)
		\end{align*}
		as we wanted to see.
		
		Finally, the naturality in $(X,\mathbb X,K,\Exp)$ follows immediately from the naturality of the $\mathcal L_B$ described in \Cref{t:inv-B-module-assoc-to-HF}.3.
	\end{proof}
	
	As an application of rigidifications, we can interpret our $p$-adic Simpson functor in terms of deformations, as follows. This is useful for future applications to Chern classes.
	\begin{Proposition}
		In the setting of \Cref{t:p-adicSimpson-proper},
		let $V$ be a pro-étale vector bundle on $X$ and let $(E,\theta):=\mathrm{S}_{\mathbb X,\Exp}(V)$. Then there is a connected smooth rigid space $S$ over $K$ with two distinguished points $0\in S(K)$ and $s\in S(K)$ and a pro-\'etale vector bundle $N$ on $X\times S$ such that the fibres of $N$ over $0$ and $s$, respectively, are $N|_{X\times \{0\}}\cong \nu^\ast E$ and $N|_{X\times \{s\}}\cong V$.
	\end{Proposition}
	\begin{proof}
		Let $S:=\mathcal P^0_{\mathbb X}$ be the connected rigid group considered  in the proof of \Cref{t:inv-B-module-assoc-to-HF}. Let $s:=\Exp_S(\tau_{B})\in S(K)$ where $B:=B_\theta$ and set $\B:=\nu^\ast B$. By \Cref{p:explicit-description-of-Pic_B}, the map
		\[ H^1_\proet(X\times S,\B^\times)\to \uPic_{\B,\proet}(S)\]
		is surjective, so we can find a pro-étale invertible $\B$-module $\mathcal L$ on $X\times S$ whose isomorphism class corresponds to the natural map $S\to \uPic_{\B,\proet}$. Then $\mathcal L|_{X\times \{0\}}\cong \B$ and $\mathcal L|_{X\times \{s\}}\cong \mathcal L_{B}$ by \Cref{t:inv-B-module-assoc-to-HF}.1. So $N:=\pi^{\ast}_1\nu^\ast E\otimes_{\B}\mathcal L$ has the desired properties by definition of $\mathrm{S}_{\mathbb X,\Exp}^{-1}$.
	\end{proof}

	\subsection{The cohomological comparison}
	\begin{Definition}\label{d:Dolbeault-cohom}
	Let $X$ be a smooth rigid space and let $(E,\theta)$ be a Higgs bundle on $X$. Recall that we write $\wtOm^k_X=\wedge^k_{\O_X}\wtOm_X=\Omega^k_X(-k)$. The Higgs complex of $(E,\theta)$ is then defined as
	\[ \mathcal C^\ast_{\mathrm{Higgs}}(E,\theta):=\big [E\xrightarrow{\theta}E\otimes \wtOm^1_X\xrightarrow{\theta_1}E\otimes \wtOm^2_X\xrightarrow{\theta_2}\dots \xrightarrow{\theta_{n-1}} E\otimes \wtOm^n_X\big]\]
	where
	$\theta_k(e\otimes w):=\theta(e)\wedge w$. We then define the Dolbeault cohomology of $(E,\theta)$ as \[R\Gamma_{\mathrm{Higgs}}(X,(E,\theta)):=R\Gamma(X_{\et},\mathcal C^\ast_{\mathrm{Higgs}}(E,\theta)).\]
	\end{Definition}
	A more conceptual definition of Dolbeault cohomology is given by the observation that
	\[ R\Gamma_{\mathrm{Higgs}}(X,(E,\theta))=\Ext_{T_X}(\O_X,E)\]
	where $E$ is equipped with the $T_X:=\Sym^\bullet \wtOm^\vee$-module structure defined by $\theta$. Indeed, to compute the right hand side, we can use the resolution of $\O_X$ as a $T_X$-module
	\[K_\bullet:=\Big[T_X\otimes \wtOm_X^{d\vee}\to \dots \to T_X\otimes \wtOm_X^{2\vee}\to T_X\otimes \wtOm_X^\vee\to T_X\Big]\to \O_X\]
	that locally on $X$ in terms of any choice of basis $\partial_1,\dots,\partial_d\in \wtOm^\vee$ is the Koszul complex $\mathrm{Kos}_\bullet(T_X;\partial_1,\dots,\partial_d)$. One easily sees that $\uExt_{T_X}(\O_X,E)=R\uHom_{T_X}(K_\bullet,E)=\mathcal C^\ast_{\mathrm{Higgs}}(E,\theta)$.
	
	We can now prove the last remaining result mentioned in the introduction:
	\begin{Theorem}\label{t:cohomological-comparison}
		In the setting of \Cref{t:p-adicSimpson-proper}, let $V$ be a pro-\'etale vector bundle on $X$ and let $(E,\theta)=
		\mathrm{S}_{\mathbb X,\Exp}(V)$ be the associated Higgs bundle. Then
		there is a natural isomorphism 
		$R\nu_\ast V=\mathcal C^\ast_{\mathrm{Higgs}}(E,\theta)$ in $D(X_{\et})$.
		Its cohomology over $X$ defines a natural isomorphism
		\[R\Gamma(X_\proet,V)=R\Gamma_{\mathrm{Higgs}}(X,(E,\theta)).\]
	\end{Theorem}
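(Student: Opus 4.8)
The plan is to prove the sheaf-level statement $R\nu_\ast V\cong \mathcal C^\ast_{\mathrm{Higgs}}(E,\theta)$ in $D(X_\et)$ first, and then deduce the comparison of global cohomology by applying $R\Gamma(X_\et,-)$: since $\nu$ is a morphism of sites, the Leray spectral sequence gives $R\Gamma(X_\proet,V)=R\Gamma(X_\et,R\nu_\ast V)$, while $R\Gamma(X_\et,\mathcal C^\ast_{\mathrm{Higgs}}(E,\theta))=R\Gamma_{\mathrm{Higgs}}(X,(E,\theta))$ by \Cref{d:Dolbeault-cohom}.

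For the sheaf statement, recall from the proof of \Cref{t:p-adicSimpson-proper} that $V\cong \nu^\ast E\otimes_{\mathcal B}\mathcal L$, where $\mathcal B=\nu^\ast B$ for $B=B_\theta=B_V\subseteq \uEnd(E)$ and $\mathcal L=\mathcal L_{B}$ is the invertible $\mathcal B$-module of \Cref{t:inv-B-module-assoc-to-HF} with $\HTlog(\mathcal L)=\tau_\theta$. The assertion is \'etale-local on $X$, so I would reduce to the case that $X=U$ is toric with a toric chart $f\colon U\to \mathbb T^d$; let $\wt U\to U$ be the associated pro-finite-\'etale Galois cover with group $\Delta=\Z_p(1)^d$. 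After shrinking $U$, \Cref{l:tech-lemma-descr-Ltau} and \Cref{l:LS-is-twisting} give $\mathcal L\cong \mathcal L_{\tau_\theta,f}$ and $V\cong \LS_f^{-1}(E,\theta)$, so by \Cref{t:local-corresp} we may write $V(\wt U)=\O(\wt U)\otimes_{\O(U)}E(U)$ with $\Delta$ acting by $\gamma\cdot(a\otimes e)=\gamma(a)\otimes\exp(\rho_f(\theta)(\gamma))e$; moreover \Cref{c:theta_E-vs-theta_V} matches $\theta_V$ with $\theta_E$ under this identification. Since $\wt U$ is affinoid perfectoid and $V|_{\wt U}$ is a free $\widehat\O$-module, $R\Gamma_\proet(\wt U,V|_{\wt U})=V(\wt U)$ is concentrated in degree $0$ by acyclicity of $\widehat\O$ on affinoid perfectoids. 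Cartan--Leray for $\wt U\to U$ then yields $R\Gamma_\proet(U,V|_U)=R\Gamma_{\cts}(\Delta,V(\wt U))$, and sheafifying over toric $U\in X_\et$ this identifies $R\nu_\ast V$ with the Koszul complex computing continuous $\Delta$-cohomology of $V(\wt U)$.

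It then remains to identify this Koszul complex with $\mathcal C^\ast_{\mathrm{Higgs}}(E,\theta)$, which is the same Tate--Sen ``decompletion'' that underlies the computation $R\nu_\ast\O=\bigoplus_n\wtOm^n_X[-n]$ and \Cref{p:RnuB^*}, now run with the extra twist by $(E(U),\exp\theta)$: the topological $\O(U)$-module $\O(\wt U)$ admits a $\Delta$-equivariant decomposition into $\O(U)$ and a complement on which some $\gamma_i-1$ acts invertibly with uniform bounds along the tower, so $R\Gamma_{\cts}(\Delta,V(\wt U))$ is computed by the Koszul complex of the commuting operators $\gamma_1-1,\dots,\gamma_d-1$ on $E(U)$, where $\gamma_i$ acts as $\exp(\theta_i)$ and $\theta_i:=\rho_f(\theta)(\gamma_i)\in pB^+(U)$ is the $i$-th component of $\theta$ in the basis dual to $\partial_1,\dots,\partial_d$. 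As $\theta_i$ is topologically nilpotent and the $\theta_i$ commute (the Higgs condition $\theta\wedge\theta=0$), one has $\gamma_i-1=\theta_i\cdot u_i$ with $u_i=\sum_{n\geq1}\theta_i^{n-1}/n!$ an invertible operator, so this Koszul complex is homotopy equivalent to the Koszul complex of $(\theta_1,\dots,\theta_d)$; under the identification $\wtOm^k_U\cong\wedge^k\Delta^\vee\otimes_{\Z_p}\O(U)$ furnished by $\rho_f$ (including the Tate twist), the latter is exactly $\mathcal C^\ast_{\mathrm{Higgs}}(E,\theta)|_U$. This proves the local statement.

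Finally, to upgrade these local quasi-isomorphisms to a single natural isomorphism in $D(X_\et)$, functorial in $V$ and in $(X,\mathbb X)$, I would construct the comparison morphism globally and chart-independently, in the style of Abbes--Gros: the lift $\mathbb X$ gives the Higgs--Tate torsor $L_{\mathbb X}$ of \Cref{l:Faltings-ext}, and via \Cref{d:LXB} a filtered pro-\'etale $\mathcal B$-algebra attached to $L_{\mathbb X,B}$ satisfying a Poincar\'e-lemma statement $R\nu_\ast(-)=\O_X$; tensoring the Higgs complex of $(E,\theta)$ with this resolution and with $\mathcal L$ produces a natural map $\mathcal C^\ast_{\mathrm{Higgs}}(E,\theta)\to R\nu_\ast V$, which the local computation above shows is a quasi-isomorphism. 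I expect this globalisation---pinning down a canonical, functorial comparison map and checking its compatibility with the choices entering $\mathcal L$---together with the uniform Tate--Sen estimates in the decompletion, to be the main obstacle; the toric-chart computation itself is by now routine.
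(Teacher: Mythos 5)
Your local toric computation is essentially the paper's: Cartan--Leray for $\wt U\to U$ reduces to continuous $\Delta$-cohomology of $V(\wt U)$ (the paper quotes \cite[Lemma 6.7]{heuer-sheafified-paCS} for this, so you do not need to redo the Tate--Sen decompletion by hand), and the identification of the two Koszul complexes via $\gamma_i-1=\theta_i\cdot u_i$ with $u_i$ a unit is exactly the paper's substitution $\partial_i\mapsto\log(T_i+1)$ combined with the fact that $\log(T+1)/T$ is a unit in $R\langle p^{-\alpha}T\rangle$. (One small caution: for the unit $u_i=\sum_{n\geq1}\theta_i^{n-1}/n!$ to converge and be invertible you need the correct smallness normalisation $\theta_i\in p^{\alpha}B^+$ with $\alpha=\tfrac1{p-1}$ resp.\ $2$, not merely $\theta_i\in pB^+$; this is why the paper works with $\mathcal T_U=\O_U\langle p^{-\alpha}\partial\rangle$. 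This is repairable by a further \'etale localisation.)

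The genuine gap is the one you flag yourself: you never construct the global, chart-independent comparison map, and the Higgs--Tate/Poincar\'e-lemma device you propose for it is not the route the paper takes and is not carried out. The paper's construction is considerably more economical and avoids any resolution of $\widehat\O$: one writes $\mathcal C^\ast_{\mathrm{Higgs}}(E,\theta)=\uExt_{T_X}(\O_X,E)$ using the Koszul resolution $K_\bullet\to\O_X$ of $T_X$-modules, base changes along the flat map $T_X\to\widehat T_J:=\varprojlim_n T_X/J^n$ where $J=\ker(T_X\to\uEnd(E))$, lifts $\mathcal L$ to an invertible $\nu^\ast\widehat T_J$-module $\widehat L$ (possible since each $T_X/J^n$ is coherent, and only up to isomorphism --- which suffices in $D(X_\et)$, so no rigidifications are needed), and then applies the exact twisting functor $-\otimes_{\widehat T_J}\widehat L$. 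Since this functor carries $K_\bullet\otimes_{T_X}\widehat T_J$ to $\O_X$ and $E$ to $V$, it produces a natural map $\mathcal C^\ast_{\mathrm{Higgs}}(E,\theta)\to R\uHom_{X_\proet}(\O_X,V)=R\nu_\ast V$ with no charts and no auxiliary filtered period algebra; your local computation then shows it is a quasi-isomorphism. So the missing globalisation is not merely bookkeeping that your sketch defers --- it is supplied in the paper by a different mechanism (flat base change to the $J$-adic completion plus twisting by $\widehat L$) than the Abbes--Gros-style Poincar\'e lemma you anticipate, and without it your argument only produces unglued local isomorphisms rather than the asserted natural isomorphism in $D(X_\et)$.
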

	
	\begin{Remark}
		Following \cite{anschutz2023hodgetate}, a natural way to prove this would be to show that \Cref{t:p-adicSimpson-proper} extends to perfect complexes. We believe that, in principle, this should be possible by our approach if one has a canonical Higgs structure on perfect complexes on $X_\proet$. Indeed, the result about perfect complexes will appear in the upcoming work of Bhatt--Zhang on the Simpson gerbe (cf \Cref{r:Simpson-gerbe}) by a twisting construction that is related to ours.
	\end{Remark}
	\begin{proof}
		We begin by defining a natural morphism in $D(X_\et)$
		\[ \mathcal C^\ast_{\mathrm{Higgs}}(E,\theta)=\uExt_{T_X}(\O_X,E)\to \uExt_{X_\proet}(\O_X,V)=R\nu_\ast V.\]
		 For this, we combine ideas of \cite{anschutz2023hodgetate} and \cite{heuer-sheafified-paCS}: 
		 Let $J:=\ker(T_X\xrightarrow{\theta} \uEnd(E))$  and consider \[\textstyle \widehat{T}_J:=\varprojlim_n T_X/J^n.\]
		 Note that $\widehat{T}_J$ still acts on $E$ via the natural projection $\widehat{T}_J\to B:=T_X/J\to \uEnd(E)$.
		  Since the natural map $T_X\to \widehat{T}_J$ is flat, we therefore have 
		 \[\uExt_{T_X}(\O_X,E)=\uExt_{\widehat{T}_J}(\O_X\otimes_{T_X}\widehat{T}_J,E).\]
		 
		 In order to compare this to the cohomology of the pro-\'etale vector bundle $V$, we would
		  like to lift the invertible $\B$-module $\mathcal L_B$ from  \Cref{t:inv-B-module-assoc-to-HF} for $\B:=\nu^\ast B$ to a $\widehat{T}_J$-module. Setting $B_n:=T_X/J^n$ and $\mathcal B_n:=\nu^\ast B_n$ for each $n\in \N$, we would like to do this by associating to $B_n$ an invertible $\mathcal B_n$-module $\mathcal L_{n}:=\mathcal L_{B_n}$. We caution that we cannot directly invoke \Cref{t:inv-B-module-assoc-to-HF} to do so if we don't know that $B_n$ is $\O_X$-torsionfree. However, we can still use \Cref{eq:exp-on-P_X} to associate to each $B_n$ an {isomorphism class} $\Exp(\tau_{B_n})\in \mathcal P_{\mathbb X}(K)$, functorial in $B$. Choosing a representative $\mathcal L_{B_n}$ in this class for any $n\geq 2$ (we set $\mathcal L_{B_1}:=\mathcal L_B$) as well as isomorphisms $\psi_n:\mathcal L_{n+1}\otimes_{\mathcal B_{n+1}}\mathcal B_n\isomarrow \mathcal L_n$ for any $n\geq 1$, which exist by functoriality,
		we still obtain an invertible $\nu^\ast \widehat{T}_J$-module 
		 \[\textstyle\widehat{\mathcal L}:=\varprojlim_{n\in \N} \mathcal L_{n}\]
		 lifting the invertible $\B$-module $\mathcal L_B$ used in \Cref{t:p-adicSimpson-proper}. We note that $\widehat{\mathcal L}$ is uniquely determined up to isomorphism: If $(\mathcal L_{n}')_{n\in \N}$ is any other system of representatives with choices of transition maps and $\widehat{\mathcal L}'$ denotes their limit, then we can find an isomorphism  $\widehat{\mathcal L}'\to \widehat{\mathcal L}$. This follows from the fact that $\Hom_{\B_n}(\mathcal L_n,\mathcal L_n')\cong H^0(X,B_n)$ is a finite dimensional $K$-vector space, so $(\mathrm{Isom}_{\B_n}(\mathcal L_n,\mathcal L_n'))_{n\in \N}$ is a Mittag-Leffler system, hence has non-empty limit.
		 
		 \medskip
		  
		  For simplicity, let us drop the $\nu^{\ast}$ from notation in the following. We can now define a natural morphism in $D(X_\et)$
		 \[\psi:\uExt_{\widehat{T}_J}(\O_X\otimes_{T_X}\widehat{T}_J,E)=R\uHom_{\widehat T_{J}}(K_\bullet\otimes_{T_X}\widehat T_{J},E)\xrightarrow{-\otimes_{\widehat{T}_J}\widehat{\mathcal L}} R\uHom_{X_\proet}(K_\bullet\otimes_{T_X}\widehat {\mathcal L},E\otimes_{\widehat{T}_J}\widehat{\mathcal L}).\]
		 This is functorial in $E$ and $B=T_X/J$. In particular, we may now without loss of generality make $B$ bigger, i.e.\ $J$ smaller, to assume that $J$ is contained in the kernel of the projection $T_X\to \O_X$ induced by the zero map $0:\wtOm^\vee\to \O_X$.
		 
		 Since the $\widehat{T}_J$-action on $E$ factors through $B$, we have a canonical identification
		 \begin{equation}\label{eq:cohom-comparison-target-1} E\otimes_{\widehat{T}_J}\widehat{\mathcal L}=E\otimes_{\B}\mathcal L=V.
		 \end{equation}
		 	 Moreover, since $-\otimes_{\widehat{T}_J}\widehat{\mathcal L}$ is exact, we have identifications
		\begin{equation}\label{eq:cohom-comparison-target-2} 	 
		 	 K_\bullet\otimes_{\widehat{T}_J}\widehat{\mathcal L}=\O_X\otimes_{\widehat{T}_J}\widehat{\mathcal L}=\O_X\otimes_\B \mathcal L=\O_X
		\end{equation}
		 because the action of $\B$ on $\O_X$ is induced by the $T_X$-action on $\O_X$ for which $\wtOm^\vee$ acts trivially.
		  Hence the right hand side of $\psi$ is naturally identified with the desired target $\uExt_{X_\proet}(\O_X,V)$.
		 
		 We note that the map $\psi$ does not depend on the choice of representative $\mathcal L_{B_n}$ or the $\psi_n$: Indeed, \eqref{eq:cohom-comparison-target-1} and \eqref{eq:cohom-comparison-target-2} give a canonical identification of the target which is independent of $\widehat{\mathcal L}$, so the map $-\otimes_{\widehat{T}_J}\widehat{\mathcal L}$ only depends on the isomorphism class of $\widehat{\mathcal L}$.
		 	 
		 It remains to see that $\psi$ is a quasi-isomorphism. In other words, we  need to prove:
		 \begin{Lemma}
		 For any $n\in \Z$, the following map is an isomorphism on $X_{\et}$:
		 \[H^n\psi:\uExt_{\widehat{T}_J}^n(\O_X\otimes_{T_X}\widehat{T}_J,E)\to \uExt^n_{X_\proet}(\O_X,V).\]
		 
		 \end{Lemma}
		 \begin{proof}
		  As this can be checked locally on $X_{\et}$, we can now replace $X$ by a toric object $f:U\to \mathbb T^d$ of $X_{\et}$ with a fixed toric chart and assume that $E$ and $V$ are small.
		 
		 In this case, we first note that the chart $f$ defines an integral subsheaf $\wtOm_U^+\subseteq \wtOm_U$ as the preimage of $\Hom(\Delta,\O^+_U)\subseteq \Hom(\Delta,\O_U)$ under the isomorphism $\rho_f$ of \Cref{d:rho}. This is a finite locally free $\O_U^+$-submodule of $\wtOm_U$. Its dual
		 $\wtOm_U^{+,\vee}:=\Hom_{\O_U^+}(\wtOm_U^+,\O^+_U)=\Delta\otimes_{\Z_p} \O^+_U$
		 inherits from $\Delta$ a basis $\partial_1,\dots,\partial_d$. 
		  We can use this to compare $\widehat{T}_J$ to a much smaller submodule: Over $U$, we first obtain an integral submodule $T_U^+:=\Sym_{\O_U^+}^\bullet p^{-\alpha}\wtOm_U^{+,\vee}\subseteq T_U$,  where $\alpha$ is as in \S\ref{s:exp-for-rigid-groups}. Write $J^+:=J_{|U}\cap T_U^+$. We can use this to define the following diagram
		 \[
		 \begin{tikzcd}
		 	T_U=\O_U[\partial_1,\dots,\partial_d] \arrow[d] \arrow[r]                                             & \varprojlim_n T_U^+/J^{+n}\tf \arrow[d] \arrow[r] & \widehat{T}_J \\
		 	{\mathcal T_U:=\mathcal O_U\langle p^{-\alpha}\partial_1,\dots,p^{-\alpha}\partial_d\rangle} \arrow[r] & {\mathcal O^+_U[[p^{-\alpha}\partial_1,\dots,p^{-\alpha}\partial_d]]\tf}                & 
		 \end{tikzcd}\]
		 of flat $T_U$-modules,
		 where the vertical maps send $\partial_i\to \partial_i$, and the second vertical map is well-defined as our assumptions on $J$ ensure $J^+\subseteq (p^{-\alpha}\partial_1,\dots,p^{-\alpha}\partial_d) T^+_U$. By flatness, we can use either of these algebras $A$ to compute $\uExt_{T_X}^n(\O_X,E)_{|U}$ as the sheaf $\uHom_A(K_\bullet\otimes_{T_U}A,E|_U[n])$. 
		 
		 We shall use the algebra $\mathcal T_U$. To simplify notation, we will in the following also just write this as $\mathcal O_U\langle p^{-\alpha}\partial\rangle:=\mathcal O_U\langle p^{-\alpha}\partial_1,\dots,p^{-\alpha}\partial_d\rangle$, and similarly for the other algebras. 
		 
		 Note that due to the assumption that $E$ is small, the $T_U$-action on $E_{|U}$ extends to a $\mathcal T_{U}$-action on $E_{|U}$. In particular, we still have a natural map $\mathcal T_U\to B$. The relevance of the convergence condition is now that after a further \'etale localisation, we can also lift $\mathcal L_B$ to an invertible $\mathcal T_U$-module $\mathcal L$: Namely, according to \Cref{l:tech-lemma-descr-Ltau}, we can simply take $\mathcal L$ to be the $\mathcal T_U$-module whose $\Delta$-action on $\mathcal T_U(\wt U)$ is defined by the continuous 1-cocycle
		 \[ c:\Delta\to \mathcal T_U^\times(U),\quad \gamma_i\mapsto \exp(\partial_i).\]
		 This allows us to compute $\psi_{|U}$ more explicitly: It is given by  sending a homomorphism
		 \[  K_\bullet\otimes_{T_U}\mathcal T_U\to E_{|U}[n]\]
		 to the map in $D(U_\proet)$ associated to the $\Delta$-linear morphism of complexes of $\O(\wt U)$-modules
		 \begin{equation}\label{eq:morph-from-Koszul-Higgs-side}
		 (K_\bullet\otimes_{T_U}\mathcal T_U)(\wt U)\to E_{|U}\otimes_{T_U}\mathcal T_U[n](\wt U)
		 \end{equation}
		 where the action on $K_\bullet$ is trivial and the action on the second factors is via $c$. On the left hand side, we note that we can describe $(K_\bullet\otimes_{T_U}\mathcal T_U)(U)$ as the Koszul complex over $R:=\O(U)$,
		 \[  (K_\bullet\otimes_{T_X}\mathcal T_U)(U)=\mathrm{Kos}_\bullet(R\langle p^{-\alpha}\partial\rangle;\partial_1,\dots,\partial_d).\]
		  On the right hand side of \Cref{eq:morph-from-Koszul-Higgs-side}, we recover $V(\wt U)[n]$ with its natural action via  \Cref{t:local-corresp}. At this point, it suffices to show that every class in $\Ext^n_{U_\proet}(\O_U,V)$ is of the form  \Cref{eq:morph-from-Koszul-Higgs-side}.
		 
		 To see this, we use that by \cite[Lemma 6.7]{heuer-sheafified-paCS}, the Cartan--Leray sequence of $\wt U\to U$ induces an isomorphism
		 $H^n_{\cts}(\Delta,M)=\Ext^n_{U_\proet}(\O_U,V)$
		 for $M:=E(U)$ with $\Delta$-action via $c$. Unravelling the definitions of continuous cohomology $H^\ast_{\cts}$ and the Cartan--Leray sequence, this means the following: Any element of $\Ext^n_{U_\proet}(\O_U,V)$ can be represented by the morphism of complexes of pro-\'etale sheaves described over $\wt U$ by the $\Delta$-equivariant $\O(\wt U)$-linear map of complexes obtained by tensoring the $R$-linear $\Delta$-equivariant morphism
		 \[\mathrm{Kos}_{\bullet}(R[[T_1,\dots,T_d]];T_1,\dots,T_d)\to M[n]\]
		 with $\O(\wt U)$. Here the action of $\gamma_i\in \Delta$ on the left is given by multiplication with $T_i+1$ (see for example the proof of \cite[Lemma~5.5]{Scholze_p-adicHodgeForRigid}). Exactly as on the Higgs side, it follows from the fact that $V$ is small that we can instead compute this using the $R$-linear $\Delta$-equivariant maps
		 \[\mathrm{Kos}_{\bullet}(R\langle p^{-\alpha}T\rangle;T_1,\dots,T_d)\to M[n].\]
		 
		 It thus suffices to see that there is a natural $R$-linear $\Delta$-equivariant isomorphism 
		 \[\mathrm{Kos}_\bullet(R\langle p^{-\alpha}\partial\rangle;\partial_1,\dots,\partial_d)\to \mathrm{Kos}_\bullet(R\langle p^{-\alpha}T\rangle;T_1,\dots,T_d).\]
		 To construct this, we first note that the natural map
		 \[\phi:R\langle p^{-\alpha}\partial\rangle\to R\langle p^{-\alpha}T\rangle,\quad \partial\mapsto \log(T+1)\]
		 is well-defined due to the convergence condition, and
		 is $\Delta$-equivariant because
		 \[ \phi(f(\partial)\cdot \gamma_i)=\phi(f(\partial)\exp(\partial_i))=f(\log(T+1))(T_i+1)=\phi(f(\partial))\cdot \gamma_i.\]
		It is an isomorphism because $T\mapsto \exp(\partial)-1$ is an inverse. We thus obtain an isomorphism
		\[\phi:\mathrm{Kos}_{\bullet}(R\langle p^{-\alpha}\partial\rangle;\partial_1,\dots,\partial_d)\isomarrow \mathrm{Kos}_{\bullet}(R\langle p^{-\alpha}T\rangle;\log(T_1+1),\dots,\log(T_d+1)).\]
		Finally, the fact that $\log(T+1)/T$ is a unit in $R\langle p^{-\alpha}T\rangle^\times$ means that the right hand side is isomorphic to $\mathrm{Kos}_\bullet(R\langle p^{-\alpha}T\rangle;T_1,\dots,T_d)$
		by \cite[0625]{StacksProject}. 
		
		All in all, this shows that $H^n\psi_{|U}$ is indeed an isomorphism, as we wanted to see.
	\end{proof}
	This finishes the proof of \Cref{t:cohomological-comparison}.
	\end{proof}

\end{document}